\definecolor{shadecolor}{gray}{0.90}
\newtheorem{theo}{Theorem}[section]
\newtheorem{prop}[theo]{Proposition}
\newtheorem{lem}[theo]{Lemma}
\newtheorem{coro}[theo]{Corollary}
\def\equat{\refstepcounter{theo}\begin{equation}}
\def\endequat{\end{equation}}
\renewcommand\thesection{\arabic{section}}
    \def\AM{{\mathbb{A}}}
    \def\CM{{\mathbb{C}}}
  \def\mG{{\mathfrak m}}  
    \def\NM{{\mathbb{N}}}
  \def\pG{{\mathfrak p}}  \def\PM{{\mathbb{P}}}
    \def\QM{{\mathbb{Q}}}
    \def\RM{{\mathbb{R}}}
    \def\ZM{{\mathbb{Z}}}
    \def\AC{{\mathcal{A}}}
    \def\BC{{\mathcal{B}}}
    \def\CC{{\mathcal{C}}}
\def\Eb{{\mathbf E}}    \def\EC{{\mathcal{E}}}
  \def\fb{{\mathbf f}}
    \def\IC{{\mathcal{I}}}
\def\Lb{{\mathbf L}}
    \def\OC{{\mathcal{O}}}
\def\Sb{{\mathbf S}}    \def\SC{{\mathcal{S}}}
    \def\UC{{\mathcal{U}}}
    \def\VC{{\mathcal{V}}}
    \def\WC{{\mathcal{W}}}
    \def\XC{{\mathcal{X}}}
    \def\YC{{\mathcal{Y}}}
    \def\ZC{{\mathcal{Z}}}
  \def\drm{{\mathrm{d}}}  
\def\Erm{{\mathrm{E}}}    
\def\Grm{{\mathrm{G}}}    
\def\Hrm{{\mathrm{H}}}    
\def\Irm{{\mathrm{I}}}
\def\Lrm{{\mathrm{L}}}
\def\Orm{{\mathrm{O}}}
\def\Srm{{\mathrm{S}}}    
\def\Trm{{\mathrm{T}}}
    \def\CCt{{\tilde{\mathcal{C}}}}
    \def\VCt{{\tilde{\mathcal{V}}}}
    \def\XCt{{\tilde{\mathcal{X}}}}
    \def\CCh{{\hat{\mathcal{C}}}}
    \def\DCh{{\hat{\mathcal{D}}}}
  \def\phat{{\hat{p}}}
    \def\XCh{{\hat{\mathcal{X}}}}
\def\a{\alpha}
\def\b{\beta}
\def\d{\delta}
\def\D{\Delta}
\def\e{\varepsilon}
\def\ph{\varphi}
\def\l{\lambda}
\def\L{\Lambda}
\def\o{\omega}
\def\r{\rho}
\def\s{\sigma}
\def\th{\theta}
\def\z{\zeta}
        \def\pht{{\tilde{\varphi}}}
\def\chib{{\boldsymbol{\chi}}}
\def\mub{{\boldsymbol{\mu}}}
           \def\Delh{{\hat{\Delta}}}
           \def\phh{{\hat{\varphi}}}
               \def\pih{{\hat{\pi}}}
             \def\rhoh{{\hat{\rho}}}
\DeclareMathOperator{\diag}{{\mathrm{diag}}}
\DeclareMathOperator{\End}{{\mathrm{End}}}
\DeclareMathOperator{\Id}{{\mathrm{Id}}}
\DeclareMathOperator{\Irr}{{\mathrm{Irr}}}
\DeclareMathOperator{\Ker}{{\mathrm{Ker}}}
\DeclareMathOperator{\val}{{\mathrm{val}}}
\def\to{\rightarrow}
\def\longto{\longrightarrow}
\def\injto{\hookrightarrow}
\def\fonction#1#2#3#4#5{\begin{array}{rccc}
{#1} : & {#2} & \longto & {#3}  \\
& {#4} & \longmapsto & {#5}
\end{array}}
\def\fonctionl#1#2#3#4#5{\begin{array}{rccl}
{#1} : & {#2} & \longto & {#3} \\
& {#4} & \longmapsto & {#5}
\end{array}}
\def\fonctio#1#2#3#4{\begin{array}{ccc}
{#1} & \longto & {#2} \\
{#3} & \longmapsto & {#4}
\end{array}}
\def\vide{\varnothing}
\def\DS{\displaystyle}
\def\SSS{\scriptscriptstyle}
\def\finl{~$\blacksquare$}
\def\lexp#1#2{\kern\scriptspace\vphantom{#2}^{#1}\kern-\scriptspace#2}
\def\le{\hspace{0.1em}\mathop{\leqslant}\nolimits\hspace{0.1em}}
\def\ge{\hspace{0.1em}\mathop{\geqslant}\nolimits\hspace{0.1em}}
\mathchardef\inferieur="321E
\mathchardef\superieur="321F
\def\eqna{\begin{eqnarray*}}
\def\endeqna{\end{eqnarray*}}
\def\itemth#1{\item[${\mathrm{(#1)}}$]}
\long\def\@car#1#2\@nil{#1}
\long\def\@first#1#2{#1}
\long\def\@second#1#2{#2}
\long\def\ifempty#1{\expandafter\ifx\@car#1@\@nil @\@empty
  \expandafter\@first\else\expandafter\@second\fi}
\def\GL{{\mathrm{GL}}}
\newcommand{\REF}{{\mathrm{Ref}}}
\def\boitegrise#1#2{\begin{centerline}{\fcolorbox{black}{shadecolor}{~
    \begin{minipage}[t]{#2}{\vphantom{~}#1\vphantom{$A_{\DS{A_A}}$}}
            \end{minipage}~}}\end{centerline}\medskip}
\theoremstyle{remark}
\newtheorem{rema}[theo]{Remark}
\newtheorem{exemple}[theo]{Example}
\theoremstyle{plain}
\def\BIL{LR}
\def\GAUCHE{L}
\def\CAR{CAR}
\def\FAM{FAM}
\def\xyinj{\ar@{^{(}->}}
\def\xysur{\ar@{->>}}
\def\unb{{\boldsymbol{1}}}
\def\petitespace{\vphantom{$\DS{\frac{\DS{A^A}}{\DS{A_A}}}$}}
\def\hlinewd#1{%
\noalign{\ifnum0=`}\fi\hrule \@height #1 %
\futurelet\reserved@a\@xhline}
\newlength\epaisLigne
\def\dotcup{\hskip1mm\dot{\cup}\hskip1mm}
\newcommand{\longiso}{\stackrel{\sim}{\longrightarrow}}
\def\hlinewd#1{%
\noalign{\ifnum0=`}\fi\hrule \@height #1 %
\futurelet\reserved@a\@xhline}
\def\petitespace{\vphantom{$\DS{\frac{\DS{A^A}}{\DS{A_A}}}$}}
\def\setw{{\mathrm{set}}}
\def\ptw{{\mathrm{pt}}}
\newtheorem{comp}[theo]{Computation}
\def\multiset#1{\{\!\{#1\}\!\}}
\def\petitespace{\vphantom{\DS{\frac{\DS{A}}{\DS{A}}}}}
\def\setw{{\mathrm{set}}}
\def\ptw{{\mathrm{pt}}}
\def\Pic{{\mathrm{Pic}}}
\def\Sym{{\mathrm{Sym}}}
\def\GL{\Grm\Lrm}
\def\Deg{{\mathrm{Deg}}}
\def\Codeg{{\mathrm{Codeg}}}
\def\trois{{\SSS{3}}}
\def\quatre{{\SSS{4}}}
\begin{document}
\title[Weyl group of type $\Erm_6$ and K3 surfaces]{Weyl group of type $\Eb_{\boldsymbol{6}}$ and K3 surfaces}

\author{C\'edric Bonnaf\'e}
\address{IMAG, Universit\'e de Montpellier, CNRS, Montpellier, France}

\email{cedric.bonnafe@umontpellier.fr}
\urladdr{http://imag.umontpellier.fr/~bonnafe/}
\makeatother

\date{\today}

\begin{abstract}
Adapting methods of previous papers by A. Sarti and the author, we construct
K3 surfaces from invariants of the Weyl group of type $\Erm_6$. We study in details
one of these surfaces, which turns out to have Picard number $20$: for this
example, we describe an elliptic fibration (and its singular fibers),
the Picard lattice and the transcendental lattice.
\end{abstract}

\maketitle


This paper can be seen as a continuation of previous works by A. Sarti and the author on
the construction of {\it singular} K3 surfaces by using invariants of finite reflection
groups~\cite{bonnafe sarti 1,bonnafe sarti 2}. We consider here
quotients of complete intersections defined by fundamental invariants of
a group of rank $6$, namely the Weyl group of type $\Erm_6$.

Let $W$ be the Weyl group of type $\Erm_6$ acting on $V=\CM^6$ and let $W'$
denote its derived subgroup (which has index $2$ in $W$ and
is equal to $W \cap \Sb\Lb_\CM(V)$). Then
the algebra $\CM[V]^W$ of polynomial functions on $V$ invariant under the action
of $W$ is generated by six homogeneous and algebraically independent
polynomials $f_2$, $f_5$, $f_6$, $f_8$, $f_9$ and $f_{12}$ (of respective
degrees $2$, $5$, $6$, $8$, $9$ and $12$).

We denote by $\XC \subset \PM(V)$ the surface defined by $f_2=f_6=f_8=0$ and,
for $\l$, $\mu \in \CM^2$, we denote by $\YC_{\l,\mu}$ the surface defined
by $f_5=f_6+\l f_2^3 = f_8+\mu f_2^4=0$. It turns out that $\XC$ is smooth
and that $\YC_{\l,\mu}$ is smooth for generic values of $(\l,\mu)$.
Our first main result in this paper is the following (see Theorem~\ref{theo:k3}):

\bigskip

\noindent{\bf Theorem A.}
{\it
\begin{itemize}
\itemth{a} The minimal resolution of the singular surface $\XC/W'$
is a smooth K3 surface.

\itemth{b} If $\l$, $\mu$ be such that $\YC_{\l,\mu}$ has at most
ADE singularities\footnote{We do not know if there are values  of $(\l,\mu)$ such that
$\YC_{\l,\mu}$ has more complicated singularities.}, then the minimal
resolution of $\YC_{\l,\mu}/W'$ is a smooth K3 surface.
\end{itemize}}

\bigskip

In the rest of the paper, we investigate further properties of $\XC/W'$ and of its
minimal resolution $\XCt$. The second main result of the paper is the following
(see Theorem~\ref{theo:elliptic}):

\bigskip

\noindent{\bf Theorem B.} {\it
\begin{itemize}
\itemth{a} The K3 surface $\XCt$ admits an elliptic fibration
$\pht : \XCt \to \PM^1(\CM)$ whose singular fibers are of type $E_7 + E_6 + A_2 + 2\, A_1$.

\itemth{b} The Picard lattice of $\XCt$ has rank $20$ and discriminant $-228=-2^2 \cdot 3 \cdot 19$.

\itemth{c} The transcendental lattice of $\XCt$ is given by the matrix
$\begin{pmatrix} 2 & 0 \\ 0 & 114 \end{pmatrix}$.
\end{itemize}}

\bigskip

In the course of the proof of Theorem~B, we obtain a complete description of
the Picard lattice: it is generated by $22$ smooth rational curves whose intersection
graph is given by\footnote{This means that two smooth rational curves $C$ and $C'$ in
this list intersect if and only if
they are joined by an edge in the above graph, and that, if so, then $C \cdot C'=1$.}
\centerline{
\begin{picture}(360,175)
\put( 55,140){\circle{10}}
\put(105,140){\circle*{10}}
\put(155,140){\circle{10}}
\put(205,125){\circle{10}}
\put(255,110){\circle{10}}
\put( 55,110){\circle{10}}
\put(105,110){\circle{10}}
\put( 55, 80){\circle{10}}
\put(105, 80){\circle{10}}
\put(155, 80){\circle{10}}\put(155,80){\circle*{6}}
\put(205, 80){\circle{10}}
\put(255, 80){\circle{10}}
\put( 55, 50){\circle{10}}
\put( 55, 20){\circle{10}}
\put(105, 20){\circle*{10}}
\put(155, 20){\circle{10}}
\put(205, 35){\circle{10}}
\put(255, 50){\circle{10}}

\put(305,110){\circle{10}}
\put(355,110){\circle{10}}
\put(330,80){\circle{10}}
\put(330,50){\circle{10}}
\put( 60, 140){\line(1,0){40}}
\put( 55, 135){\line(0,-1){20}}
\put(110, 140){\line(1,0){40}}
\put( 55, 105){\line(0,-1){20}}
\put( 55, 75){\line(0,-1){20}}
\put( 55, 45){\line(0,-1){20}}
\put(105, 105){\line(0,-1){20}}
\put(255, 105){\line(0,-1){20}}
\put(255, 75){\line(0,-1){20}}
\put( 60, 80){\line(1,0){40}}
\put(110, 80){\line(1,0){40}}
\put(160, 80){\line(1,0){40}}
\put(210, 80){\line(1,0){40}}
\put( 60, 20){\line(1,0){40}}
\put(110, 20){\line(1,0){40}}

\put(310,110){\line(1,0){40}}

\qbezier(159.789,138.563)(180,132.5)(200.211,126.437)
\qbezier(209.789,123.563)(230,117.5)(250.211,111.437)
\qbezier(159.789,21.437)(180,27.5)(200.211,33.563)
\qbezier(209.789,36.563)(230,42.5)(250.211,48.437)

\put(60,140){\oval(90,50)[t]}
\put(60,20){\oval(90,50)[b]}
\put(15,140){\line(0,-1){120}}


\end{picture}}

\bigskip

\noindent
In this graph, the two black disks correspond to sections of the elliptic
fibration $\pht$ mentioned in Theorem~B(a). The disk with a smaller black disk inside
corresponds to a double section. The singular fibers
of $\pht$ of type $E_7$ and $E_6$ are given by the white disks
of the big connected subgraph.

\medskip

The paper is organized as follows. In Section~\ref{sec:preliminaries},
we fix general notation and prove some preliminary results about group actions
on tangent spaces. In Section~\ref{sec:e6}, we fix the context, recall properties
of the Weyl group of type $E_6$ and recall results from Springer theory~\cite{springer}.
Section~\ref{sec:k3} is mainly devoted to the proof of Theorem~A. Section~\ref{sec:xw}
gathers many geometric properties of the quotient variety $\XC/W'$ (singularities,
smooth rational curves, explicit equations in a weighted projective space):
in a first reading of this paper, we believe the reader might skip the details
of this very computational section and remember
only the concrete results. Theorem~B is proved in Section~\ref{sec:xtilde}.
As a complement to all these data,
the last section~\ref{sec:action} contains the decomposition of the character of the
representation $\Hrm^2(\XC,\CM)$ of $W$ into a sum of irreducible characters
(note that $\dim \Hrm^2(\XC)=9\,502$).

Some of the proofs require computer calculations, especially for all the
preliminary results for proving Theorem~B: they are done
thanks to the software {\sc magma}~\cite{magma} and are
explicitly described in the Appendices.

\bigskip

\section{General notation, preliminaries}\label{sec:preliminaries}

\medskip

All vector spaces, all algebras, all algebraic varieties
will be defined over $\CM$. Algebraic varieties will always
be reduced and quasi-projective, but not necessarily irreducible.
If $\XC$ is an algeraic variety and if $x \in \XC$, we denote
by $\Trm_x(\XC)$ the tangent space of $\XC$ at $x$. If $\XC$ is
moreover affine, we denote by $\CM[\XC]$ its algebra of regular functions.

We fix a square root $i$ of $-1$ in $\CM$.
If $V$ is a vector space, $g$ is an element of $\End_\CM(V)$
and $\z \in \CM$, we denote by $V(g,\z)$ the $\z$-eigenspace of $g$.
The list of eigenvalues of an element of $\End_\CM(V)$ will always
be given with multiplicities (and will be seen as a multiset: a multiset
will be always written with double brackets, e.g. $\multiset{a,b,\dots}$).
If $d \in \NM^*$, we denote
by $\mub_d$ the group of $d$-th roots of unity in $\CM^\times$ and we set $\z_d=\exp(2i\pi/d)$.

If $V$ is a finite dimensional vector space and if $v \in V \setminus \{0\}$, we denote by $[v]$ its image in
the projective space $\PM(V)$. If $p \in \PM(V)$, we denote by $G_p$ its stabilizer in $G \subset \GL_\CM(V)$.
In other words, $G_{[v]}$ is the set of elements of $G$ admitting $v$ as an eigenvector.

If $X$ is a subset of $V$ and if $G$ is a subgroup of $\GL_\CM(V)$, we denote by
$G_X^\setw$ (resp. $G_X^\ptw$) the setwise (resp. pointwise) stabilizer of $X$ and
we set $G[X]=G_X^\setw/G_X^\ptw$ (so that $G[X]$ acts faithfully on the set $X$ and
on the vector space generated by $X$).

If $d_1$,\dots, $d_n$ are positive integers, we denote by $\PM(d_1,\dots,d_n)$
the associated weighted projective space. If $f_1$,\dots, $f_r \in \CM[X_1,\dots,X_n]$ are homogeneous
(with $X_i$ endowed with the degree $d_i$), we denote by $\ZC(f_1,\dots,f_r)$ the (possibly
non-reduced) closed subscheme defined by $f_1=\cdots=f_r=0$.

The next three lemmas will be used throughout this paper. The first one is
trivial, but very useful~\cite[Lem.~2.2]{bonnafe sarti 1}:

\bigskip

\begin{lem}\label{lem:trivial}
Let $V$ be a finite dimensional vector space,
let $f \in \CM[V]$, let $g \in \GL_\CM(V)$ and let $v \in V \setminus \{0\}$ be such that:
\begin{itemize}
\itemth{1} $g(v)=\xi v$, with $\xi \in \CM^\times$.

\itemth{2} $f$ is homogeneous of degree $d$ with $\xi^d \neq 1$.

\itemth{3} $f$ is $g$-invariant.
\end{itemize}
Then $f(v)=0$.
\end{lem}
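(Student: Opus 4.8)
The plan is to use the three hypotheses directly to force $f(v)$ to be a fixed point of multiplication by a root of unity other than $1$, and conclude it must vanish. First I would apply the $g$-invariance hypothesis, which says $f(g^{-1}(w)) = f(w)$ for all $w$, or equivalently (since $g$ is invertible) $f(g(w)) = f(w)$ for all $w \in V$; evaluating at $w = v$ gives $f(g(v)) = f(v)$.

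Next I would substitute the eigenvector relation $g(v) = \xi v$ from hypothesis~(1) into the left-hand side, so that $f(g(v)) = f(\xi v)$. Then I would invoke the homogeneity of $f$ from hypothesis~(2): since $f$ is homogeneous of degree $d$, we have $f(\xi v) = \xi^d f(v)$. Combining these two observations yields
\begin{equation*}
\xi^d f(v) = f(\xi v) = f(g(v)) = f(v),
\end{equation*}
that is, $(\xi^d - 1) f(v) = 0$.

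Finally I would use hypothesis~(2) once more, namely the condition $\xi^d \neq 1$, which guarantees that $\xi^d - 1$ is a nonzero scalar in $\CM^\times$. Dividing through gives $f(v) = 0$, as required.

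There is no real obstacle here: the statement is elementary, and the only point requiring any care is making sure the direction of the invariance convention matches — whether $f$ being $g$-invariant means $f \circ g = f$ or $f \circ g^{-1} = f$ — but since $g$ is invertible these are equivalent and both lead to $f(\xi v) = f(v)$, so the argument is robust to either reading. The whole proof is three lines of substitution, which is exactly why the authors describe the lemma as trivial but useful.
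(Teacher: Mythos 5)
Your proof is correct and is exactly the intended argument: the paper itself gives no proof (it labels the lemma trivial and cites~\cite[Lem.~2.2]{bonnafe sarti 1}), and the computation $f(v)=f(g(v))=f(\xi v)=\xi^d f(v)$ together with $\xi^d\neq 1$ is the standard one-line justification. Nothing further is needed.
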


\bigskip

The next one is certainly well-known and might have its own interest:

\bigskip

\begin{lem}\label{lem:action tangent}
Let $V$ be a finite dimensional vector space,
let $n=\dim_\CM V$, let $G$ be a subgroup of $\GL_\CM(V)$,
let $v \in V \setminus \{0\}$ and
let $f_1$,\dots, $f_r \in \CM[V]$ be $G$-invariant and
homogeneous of respective degrees $d_1$,\dots, $d_r$. We assume that:
\begin{itemize}
\itemth{1} $\ZC(f_1,\dots,f_r)$ is a global complete intersection in $\PM(V)$.

\itemth{2} $G$ stabilizes the line $[v]$ (let $\th_v : G \to \CM^\times$ denote
the linear character defined by $g(v)=\th_v(g)v$ for all $g \in G$).

\itemth{3} $[v]$ is a smooth point of $\ZC(f_1,\dots,f_r)$.
\end{itemize}
Then the $r$-dimensional semisimple representation $\th_v^{d_1-1} \oplus \cdots \oplus \th_v^{d_r-1}$
is isomorphic to a subrepresentation $E$ of the $(n-1)$-dimensional representation $(V/[v])^*$
of $G$ and, as representations of $G$, we have an isomorphism
$$\Trm_{[v]}(\ZC(f_1,\dots,f_r)) \simeq E^\perp \otimes \th_v^{-1},$$
where $E^\perp = \{x \in V/[v]~|~\forall~\ph \in E,~\ph(x)=0\}$.
\end{lem}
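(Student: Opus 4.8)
The plan is to compute the Zariski tangent space of $\ZC:=\ZC(f_1,\dots,f_r)$ at $[v]$ as a subspace of the projective tangent space $\Trm_{[v]}(\PM(V))$, keeping track of the $G$-action at every step. The starting point is the canonical $G$-equivariant identification $\Trm_{[v]}(\PM(V))\simeq \Hom(\la v\ra, V/\la v\ra)=\la v\ra^*\otimes (V/\la v\ra)$. Since $g(v)=\th_v(g)v$ by hypothesis~(2), the line $\la v\ra$ affords the character $\th_v$ and $\la v\ra^*=\th_v^{-1}$, so that $\Trm_{[v]}(\PM(V))\simeq \th_v^{-1}\otimes (V/\la v\ra)$ as representations of $G$.

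First I would analyze the differentials. For each $j$, let $df_j|_v\in V^*$ denote the differential of $f_j$ at the point $v$. Euler's identity for the homogeneous polynomial $f_j$ gives $df_j|_v(v)=d_j\,f_j(v)=0$, because $[v]\in\ZC$; hence $df_j|_v$ vanishes on $\la v\ra$ and descends to a linear form $\bar\ell_j\in(V/\la v\ra)^*$. The transformation law follows from $G$-invariance: differentiating $f_j(gx)=f_j(x)$ and using the homogeneity relation $df_j|_{\lambda v}=\lambda^{d_j-1}df_j|_v$ yields $g\cdot(df_j|_v)=\th_v(g)^{d_j-1}\,df_j|_v$, so that the line $\la\bar\ell_j\ra$, whenever it is nonzero, is a subrepresentation of $(V/\la v\ra)^*$ isomorphic to $\th_v^{d_j-1}$.

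Next I would invoke smoothness. Because $\ZC$ is a global complete intersection (hypothesis~(1)) it has pure dimension $n-1-r$, and since $[v]$ is a smooth point (hypothesis~(3)) we have $\dim\Trm_{[v]}(\ZC)=n-1-r$. Computing in an affine chart centered at $[v]$, this tangent space is identified, as a vector space, with $\bigcap_j\ker(\bar\ell_j)\subseteq V/\la v\ra$; this common kernel therefore has codimension exactly $r$, which forces the forms $\bar\ell_1,\dots,\bar\ell_r$ to be linearly independent. Setting $E:=\la\bar\ell_1,\dots,\bar\ell_r\ra$, each summand $\la\bar\ell_j\ra$ is $G$-stable, so $E=\bigoplus_j \la\bar\ell_j\ra\simeq \th_v^{d_1-1}\oplus\cdots\oplus\th_v^{d_r-1}$ is an $r$-dimensional subrepresentation of $(V/\la v\ra)^*$, which is the first assertion.

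Finally I would make the identification of the tangent space equivariant. The differentials of the invariant sections $f_j$ of $\OC(d_j)$ assemble into a $G$-equivariant map $\Trm_{[v]}(\PM(V))\to\bigoplus_j \OC(d_j)_{[v]}$, whose kernel is $\Trm_{[v]}(\ZC)$; here $\OC(d_j)_{[v]}=\th_v^{-d_j}$ since the fibre of $\OC(-1)$ at $[v]$ is $\la v\ra\simeq\th_v$. Tensoring by $\th_v$, an exact operation that commutes with kernels, turns this into the map $(\bar\ell_1,\dots,\bar\ell_r)\colon V/\la v\ra\to\bigoplus_j\th_v^{-(d_j-1)}$, whose kernel is $\bigcap_j\ker\bar\ell_j=E^\perp$. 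Hence $\Trm_{[v]}(\ZC)\otimes\th_v\simeq E^\perp$, i.e. $\Trm_{[v]}(\ZC)\simeq E^\perp\otimes\th_v^{-1}$, as claimed. The main obstacle is precisely this equivariance bookkeeping: one must correctly account for the twist by $\th_v^{-1}$ coming from the fibre of $\OC(1)$ at $[v]$ (equivalently, the fact that the naive affine-chart identification of the tangent space with $V/\la v\ra$ is $G$-equivariant only after tensoring by $\th_v^{-1}$), and one must make sure that the linear independence of the $\bar\ell_j$ — which is exactly what smoothness of the complete intersection provides — is what makes $E$ genuinely $r$-dimensional and pins down its isomorphism type as the stated direct sum.
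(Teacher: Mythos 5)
Your proof is correct and follows essentially the same route as the paper: both identify $\Trm_{[v]}(\PM(V))$ with $(V/[v])\otimes\th_v^{-1}$, use Euler's identity to see that the differentials $df_j|_v$ descend to $(V/[v])^*$, derive the transformation law $g\cdot df_j|_v=\th_v(g)^{d_j-1}df_j|_v$ from invariance and homogeneity, and use smoothness of the complete intersection to get linear independence. The paper phrases this dually via the cotangent space $\mG/\mG^2$ of the local ring, but this is only a cosmetic difference.
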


\bigskip

\begin{rema}\label{eq:tangent proj}
Keep the notation of the proposition.
Recall that we have a natural identification $\Trm_{[v]}(\PM(V)) \simeq V/[v]$ but that
this identification is not $G$-equivariant. We will recall the construction of this isomorphism
in the proof of the proposition, so that we can follow the action of $G$: as we will see,
we have a natural isomorphism of $G$-representations $\Trm_{[v]}(\PM(V)) \simeq (V/[v]) \otimes \th_v^{-1}$.
Therefore, $\Trm_{[v]}(\XC)$ is naturally a subrepresentation of $(V/[v]) \otimes \th_v^{-1}$,
and this is the aim of this proposition to identify this subrepresentation.\finl
\end{rema}

\bigskip

\begin{proof}
Write $\XC=\ZC(f_1,\dots,f_r)$ and $p=[v]$.
The point $p$ of $\PM(V)$ corresponds to the homogeneous
ideal $\pG$ of $\CM[V]$ generated by $v^\perp \subset V^*$.
The local ring $\OC$ of $\PM(V)$ at $p$ is
$$\CM[V]_{(\pG)}=\{a/b~|~a, b \in \CM[V],~\text{homogeneous and}~b \not\in \pG\}.$$
We denote by $\mG$ the unique maximal ideal of $\OC$. Then
$$\mG=\{a/b~|~a,b \in \CM[V],~\text{homogeneous},~a \in \pG~\text{and}~b \not\in \pG\}.$$
The map
$$\fonction{\drm_v}{\mG}{V^*}{f}{\drm_v f}$$
induces an isomorphism
$$\d : \Trm_p(\PM(V))^*=\mG/\mG^2 \longiso v^\perp =(V/[v])^*.$$
(Recall that, if $a \in \pG$ is homogeneous of degree $m$ and $b \not\in \pG$ is homogeneous,
then $\drm_v(a/b) (v) = (\drm_v a)(v)/b(v)=0$ because
$(\drm_v a)(v)=ma(v)=0$ by Euler's identity). Note, however, that $\d$ is not $G$-equivariant
for the classical action: indeed, as $g(v)=\th_v(g)v$ for all $g \in G$, $\d$ induces
an isomorphism of $G$-modules
$$\d : \Trm_p(\PM(V))^* \longiso (V/[v])^* \otimes \th_v.$$
Through this isomorphism,
$$\Trm_p(\XC)^* \simeq \bigl((V/[v])^*/E\bigr) \otimes \th_v,$$
where
$$E= \sum_{k=1}^r \CM \d(f_k).$$
Now, since $\XC$ is smooth at $p$ and $\XC$ is a global complete intersection,
this means that the elements $\d(f_1)$,\dots, $\d(f_r)$ are linearly independent.
To conclude the proof of the proposition, it remains to notice that
$g(\d(f_k))=\th_v(g)^{d_k-1} \d(f_k)$ for all $g \in G$, which follows immediately
from the $G$-invariance of $f_k$.
\end{proof}

\bigskip

The next corollary is an immediate consequence of Lemma~\ref{lem:action tangent} in the
case where $G$ is a cyclic group.

\bigskip

\begin{coro}\label{coro:action tangent}
Let $V$ be a finite dimensional vector space,
let $n=\dim_\CM V$, let $g \in \GL_\CM(V)$, let $v \in V \setminus \{0\}$ and
let $f_1$,\dots, $f_r \in \CM[V]$ be $g$-invariant and
homogeneous of respective degrees $d_1$,\dots, $d_r$. Let $\multiset{\xi_1,\dots,\xi_n}$
be the multiset of eigenvalues of $g$. We assume that:
\begin{itemize}
\itemth{1} $\ZC(f_1,\dots,f_r)$ is a global complete intersection in $\PM(V)$.

\itemth{2} $g(v)=\xi_n v$.

\itemth{3} $[v]$ is a smooth point
of $\ZC(f_1,\dots,f_r)$.
\end{itemize}
Then $\multiset{\xi_n^{-d_1},\dots,\xi_n^{-d_r}}$ is contained in the multiset
$\multiset{\xi_n^{-1}\xi_1,\dots,\xi_n^{-1}\xi_{n-1}}$ and the list of eigenvalues
of $g$ for its action on the tangent space $\Trm_{[v]}(\ZC(f_1,\dots,f_r))$
is the multiset
$$\multiset{\xi_n^{-1}\xi_1,\dots,\xi_n^{-1}\xi_{n-1}} \setminus
\multiset{\xi_n^{-d_1},\dots,\xi_n^{-d_r}}.$$
\end{coro}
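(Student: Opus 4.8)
The plan is to deduce Corollary~\ref{coro:action tangent} directly from Lemma~\ref{lem:action tangent} by specializing the group $G$ to the cyclic group $G=\la g \ra$ generated by $g$. First I would choose an eigenbasis for $g$: since $g \in \GL_\CM(V)$ and we are over $\CM$, I may not be able to diagonalize $g$ in general, but the hypothesis that $\multiset{\xi_1,\dots,\xi_n}$ is the multiset of eigenvalues of $g$ with $g(v)=\xi_n v$ means I should work with the action of $g$ on the relevant quotient and subspaces at the level of eigenvalues (characters of the cyclic group), which is all that is needed. The linear character $\th_v$ of Lemma~\ref{lem:action tangent}(2) is here the character sending $g \mapsto \xi_n$.

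**Translating the three hypotheses.**

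Hypotheses (1) and (3) of the corollary are verbatim hypotheses (1) and (3) of the lemma. Hypothesis (2) of the corollary, $g(v)=\xi_n v$, gives hypothesis (2) of the lemma with $\th_v(g)=\xi_n$. The key translation is that, for the cyclic group $G=\la g\ra$, a representation is determined up to isomorphism by the multiset of eigenvalues of $g$ on it. On $(V/[v])^*$ the eigenvalues of $g$ are the reciprocals of the eigenvalues of $g$ on $V/[v]$, namely $\multiset{\xi_1^{-1},\dots,\xi_{n-1}^{-1}}$. The character $\th_v^{d_k-1}$ corresponds to the eigenvalue $\xi_n^{d_k-1}$.

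**Reading off the tangent-space eigenvalues.**

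By Lemma~\ref{lem:action tangent}, the semisimple representation $\th_v^{d_1-1}\oplus\cdots\oplus\th_v^{d_r-1}$ embeds as a subrepresentation $E$ of $(V/[v])^*$. In eigenvalue terms this is exactly the statement that the multiset $\multiset{\xi_n^{d_1-1},\dots,\xi_n^{d_r-1}}$ is contained in $\multiset{\xi_1^{-1},\dots,\xi_{n-1}^{-1}}$; equivalently, multiplying every entry by $\xi_n^{-1}$, that $\multiset{\xi_n^{-d_1},\dots,\xi_n^{-d_r}}$ is contained in $\multiset{\xi_n^{-1}\xi_1,\dots,\xi_n^{-1}\xi_{n-1}}$, which is the asserted containment. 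The lemma then gives $\Trm_{[v]}(\XC)\simeq E^\perp\otimes\th_v^{-1}$, where $E^\perp\subset V/[v]$. The eigenvalues of $g$ on $E^\perp$ form the complementary multiset to those on $E$ inside $V/[v]$; since $E$ carries eigenvalues $\multiset{\xi_n^{d_1-1},\dots,\xi_n^{d_r-1}}$ viewed inside $(V/[v])^*$, the annihilator $E^\perp$ carries, inside $V/[v]$, the complement of $\multiset{\xi_n^{1-d_1},\dots,\xi_n^{1-d_r}}$ in $\multiset{\xi_1,\dots,\xi_{n-1}}$. Finally tensoring by $\th_v^{-1}$ multiplies every eigenvalue by $\xi_n^{-1}$, yielding precisely the multiset
$$\multiset{\xi_n^{-1}\xi_1,\dots,\xi_n^{-1}\xi_{n-1}}\setminus\multiset{\xi_n^{-d_1},\dots,\xi_n^{-d_r}}$$
as the list of eigenvalues of $g$ on $\Trm_{[v]}(\ZC(f_1,\dots,f_r))$.

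**Main obstacle.**

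The only genuinely delicate point is bookkeeping with the three successive twists: passing from $V/[v]$ to its dual reciprocates eigenvalues, the embedding of $E$ adds the shift by $d_k-1$ coming from the homogeneity of $f_k$ (established at the end of the proof of Lemma~\ref{lem:action tangent} via $g(\drm_v f_k)=\th_v(g)^{d_k-1}\drm_v f_k$), and the final tensor by $\th_v^{-1}$ shifts everything back by $\xi_n^{-1}$. I would be careful to verify that these shifts compose to give exactly the stated multiset and that ``taking $E^\perp$'' corresponds to multiset complement, which is legitimate precisely because over the cyclic group $\la g\ra$ a submodule is a direct summand and the eigenvalue multiset is additive over direct sums.
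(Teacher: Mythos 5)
Your proposal is correct and follows exactly the route the paper intends: the paper gives no separate proof of Corollary~\ref{coro:action tangent}, declaring it an immediate consequence of Lemma~\ref{lem:action tangent} applied to the cyclic group $G=\langle g\rangle$, which is precisely the specialization and eigenvalue bookkeeping you carry out. The only slight inaccuracy is your closing justification that a submodule of a $\langle g\rangle$-module is a direct summand (false if $g$ is not diagonalizable); this is harmless because eigenvalue multisets are additive over short exact sequences, so the complement identification for $E^\perp\cong\bigl((V/[v])^*/E\bigr)^*$ holds regardless.
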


\bigskip

\begin{rema}\label{rem:lissite}
Keep the notation of Lemma~\ref{lem:action tangent} and assume moreover that $G$ is a closed reductive
subgroup of $\GL_\CM(V)$ (for instance, $G$ might be finite), that $v \in V^G$ (so that $\th_v=1$)
and that $\PM(V^G) \subset \ZC(f_1,\dots,f_r)$. We want to show that
$$\text{\it $[v]$ is a singular point of $\ZC(f_1,\dots,f_r)$.}\leqno{(\#)}$$
For this, assume that $[v]$ is smooth and let $E$ denote the $G$-stable subspace of $(V/[v])^*$
of dimension $r$ defined in the proof of Lemma~\ref{lem:action tangent} and such that
the $G$-module $\Trm_{[v]}(\ZC(f_1,\dots,f_r))$ is identified with $E^\perp$.
By hypothesis, $V^G/[v] \subset \Trm_{[v]}(\ZC(f_1,\dots,f_r))$ so, since
$G$ acts semisimply on $V$, its orthogonal $F$ in  $(V/[v])^*$ satisfies $F^G=0$.
This contradicts the fact that $E \subset F$. This shows~$(\#)$.

If $G$ is finite, $\dim_\CM V^G=1$ (resp. $\dim_\CM V^G=2$)
and $r=1$, we retrieve~\cite[Coro.~2.4]{bonnafe sarti 1} (resp. \cite[Coro.~2.9]{bonnafe sarti 1}).\finl
\end{rema}

\bigskip

\section{Set-up}\label{sec:e6}

\medskip

For the classical theory of Coxeter groups, reflection groups, Dynkin diagrams,...
we mainly refer to Bourbaki's book~\cite{bourbaki} or Brou\'e's book~\cite{broue}. For
Springer theory (and its enhancement by Lehrer-Springer), we refer to~\cite{springer},~\cite{lehrerspringer1},~\cite{lehrerspringer2}
and~\cite{lehrermichel}.

Let $V=\CM^6$ be endowed with its canonical basis $(e_1,e_2,e_3,e_4,e_5,e_6)$.
Through this basis, we identify $\GL_\CM(V)$ and $\GL_6(\CM)$ and we let it act on $V$
on the left. We denote by
$(X_1,X_2,X_3,X_4,X_5,X_6)$ the dual basis of $(e_1,e_2,e_3,e_4,e_5,e_6)$,
so that the algebra $\CM[V]$ will be identified with the polynomial algebra $\CM[X_1,X_2,X_3,X_4,X_5,X_6]$. We endow it with
the symmetric bilinear form $\langle,\rangle$ attached to the Dynkin diagram of type $\Erm_6$
(we follow the strange numbering of nodes given by Bourbaki~\cite[Chap.~6,~Planche~V]{bourbaki}):

\begin{center}
\setlength{\unitlength}{0.3mm}
\begin{picture}(320,63)
\put( 80, 40){\circle{10}}\put(77,49){$1$}
\put( 85, 40){\line(1,0){30}}
\put(120, 40){\circle{10}}\put(117,49){$3$}
\put(125, 40){\line(1,0){30}}
\put(160, 40){\circle{10}}\put(157,49){$4$}
\put(165, 40){\line(1,0){30}}
\put(200, 40){\circle{10}}\put(197,49){$5$}
\put(205, 40){\line(1,0){30}}
\put(240, 40){\circle{10}}\put(237,49){$6$}
\put(160, 35){\line(0,-1){20}}
\put(160, 10){\circle{10}}\put(145,6){$2$}
\put(-100,30){$({\mathrm{E}}_6)$}
\end{picture}
\end{center}
Recall that this means that
$$
\begin{cases}
\langle e_k, e_k \rangle = 1 & \text{if $1 \le k \le 6$,} \\
\langle e_k, e_l \rangle = -1/2 &
\text{if $1\le k \neq l \le 6$ and $\{k,l\}$ is an edge of the graph $(\Erm_6)$,} \\
\langle e_k,e_l \rangle = 0 &
\text{if $1 \le k \neq l \le 6$ and $\{k,l\}$ is not an edge of the graph $(\Erm_6)$.}
\end{cases}
$$
Recall that $\langle ,\rangle $ is non-degenerate (in fact, when restricted to $\RM^6$,
it is positive definite). For $1 \le k \le 6$, we denote by $s_k$ the orthogonal reflection
such that $s_k(e_k)=-e_k$. We set
$$W=\langle s_1,s_2,s_3,s_4,s_5,s_6 \rangle.$$
We denote by $\Orm(V)$ the orthogonal group of $V$, with respect to the bilinear
form $\langle,\rangle$. By construction, $W$ is a sugroup of $\Orm(V)$
and is called a {\it Weyl group of type $\Erm_6$}.

\bigskip

\subsection{First properties of ${\boldsymbol{W}}$}
The group $W$ is finite and acts irreducibly of $V$. Moreover,
\equat\label{eq:order}
|W|=51\,840.
\endequat
Also, the center of $W$ is trivial, so $W$ acts faithfully on $\PM(V)$.
Let $\e : W \to \mub_2=\{-1.1\}$, $w \mapsto \det(w)$. Recall that $\Ker \e$ is the
derived (i.e. commutator) subgroup of $W$, which will be denoted by $W'$. In particular,
\equat\label{eq:order dw}
|W'| = 25\,920.
\endequat
We denote by $\Deg(W)$ (resp. $\Codeg(W)$) the {\it degrees} (resp. the {\it codegrees}) of
$W$, as defined in~\cite[Chap.~4]{broue}. It turns out that
\equat\label{eq:degres}
\Deg(W)=\{2,5,6,8,9,12\} \qquad \text{and} \qquad \Codeg(W) = \{0,3,4,6,7,10\}.
\endequat
In particular, this means that there exist $6$ homogeneous $W$-invariant polynomials
$f_2$, $f_5$, $f_6$, $f_8$, $f_9$ and $f_{12}$, of respective degrees $2$, $5$, $6$, $8$, $9$ and $12$,
such that
\equat\label{eq:invariants}
\CM[V]^W = \CM[f_2,f_5,f_6,f_8,f_9,f_{12}].
\endequat
We set $\fb=(f_2,f_5,f_6,f_8,f_9,f_{12})$ and we recall that $\fb$ is not uniquely determined.
Thanks to~\eqref{eq:invariants}, we get that
$$\fonctionl{\pi_\fb}{\PM(V)}{\PM(2,5,6,8,9,12)}{[v]}{[f_2(v),f_5(v),f_6(v),f_8(v),f_9(v),f_{12}(v)]}$$
is well-defined and induces an isomorphism of varieties
\equat\label{eq:quotient}
\PM(V)/W \longiso \PM(2,5,6,8,9,12).
\endequat
The graded algebra associated with the weighted projective space $\PM(2,5,6,8,9,12)$
will be denoted by $\CM[Z_2,Z_5,Z_6,Z_8,Z_9,Z_{12}]$, with $Z_d$ being given the degree $d$ (for
all $d \in \Deg(W)$).

\bigskip

\begin{rema}\label{rem:f2}
The quadratic form $Q : V \to \CM$, $v \mapsto \langle v,v \rangle$ is $W$-invariant.
Hence $f_2$ is a scalar multiple of $Q$. Since $Q$ is positive definite when restricted
to $\RM^6$, we have in particular that $f_2(e_1) \neq 0$.\finl
\end{rema}

\bigskip

\def\Jac{{\mathrm{Jac}}}

Let $\REF(W)$ be the set of reflections of $W$ and let $\AC$ be the hyperplane arrangement
of $W$ (i.e. $\AC=\{V^s~|~s \in \REF(W)\}$). Then
\equat\label{eq:ref}
|\REF(W)|=|\AC|=\sum_{d \in \Deg(W)} (d-1) = 36.
\endequat
If $H \in \AC$, we denote by $\a_H$ an element of $V^*$ such that $H=\Ker \a_H$. We set
$$\Jac=\prod_{H \in \AC} \a_H \in \CM[V].$$
Since all the $\a_H$ are well-defined up to a non-zero
scalar, $\Jac$ is also well-defined up to a non-zero scalar.
Then
\equat\label{eq:action j}
\lexp{w}{\Jac}=\e(w)\Jac
\endequat
for all $w \in W$ and
\equat\label{eq:invariants w'}
\CM[V]^{W'}=\CM[f_2,f_5,f_6,f_8,f_9,f_{12},\Jac].
\endequat
Moreover, $\Jac$ is homogeneous of degree $36$ by~\eqref{eq:ref}.
Also, since $\Jac^2 \in \CM[V]^W$ by~\eqref{eq:action j}, there exists a unique homogeneous polynomial
$P_\fb \in \CM[Z_2,Z_5,Z_6,Z_8,Z_9,Z_{12}]$ such that
$$\Jac^2=P_\fb(f_2,f_5,f_6,f_8,f_9,f_{12}).$$
Of course, $P_\fb$ depends heavily on the choice of the family $\fb$
and, up to a non-zero scalar, on the choice of the $\a_H$'s.
It turns out that this relation generates the ideal of relations between
the functions $f_2$, $f_5$, $f_6$, $f_8$, $f_9$, $f_{12}$ and $\Jac$. In particular, the map
$$\fonctionl{\pi_\fb'}{\PM(V)}{\PM(2,5,6,8,9,12,36)}{[v]}{[f_2(v) : f_5(v) : f_6(v) : f_8(v) : f_9(v) :
f_{12}(v) : \Jac(v)]}$$
is well-defined and induces an isomorphism of varieties
\equat\label{eq:quotient w'}
\begin{array}{rcl}
\PM(V)/W' & \longiso & \{[z_2 : z_5 : z_6 : z_8 : z_9: z_{12} : j] \in \PM(2,5,6,8,9,12,36)~|~ \petitespace\\
&&  \hskip2cm j^2=P_\fb(z_2,z_5,z_6,z_8,z_9,z_{12})\}.\petitespace
\end{array}
\endequat
Finally, we denote by $\o : \PM(V)/W' \longto \PM(V)/W$ the natural morphism, which is just the quotient
map by $\mub_2 \simeq W/W'$. Through the isomorphisms~\eqref{eq:quotient} and~\eqref{eq:quotient w'},
the action of the non-trivial element of $\mub_2$ is given by the involutive automorphism $\s$
given by
$$\s [z_2 : z_5 : z_6 : z_8 : z_9 : z_{12} : j]=[z_2 : z_5 : z_6 : z_8 : z_9 : z_{12} : -j]$$
and $\o$ is given by
$$\o [z_2 : z_5 : z_6 : z_8 : z_9 : z_{12} : j] = [z_2 : z_5 : z_6 : z_8 : z_9 : z_{12}].$$

\subsection{Eigenspaces, Springer theory}
As in~\cite{bonnafe sarti 1}, an important role is played by Springer and Lehrer-Springer theory.
We recall briefly the results we will need (this subsection is a simplified version
of~\cite[\S{3.3}]{bonnafe sarti 1}, adapted to the particular case of our Weyl group $W$).
All the results stated here can be found
in~\cite{springer},~\cite{lehrerspringer1},~\cite{lehrerspringer2}.
Note that some of the proofs have been simplified in~\cite{lehrermichel}.
Let us fix now a natural number $e$. We set
$$\D(e)=\{d \in \Deg(W)~|~e\text{~divides~}d\},$$
$$\D^*(e)=\{d^* \in \Codeg(W)~|~e~\text{divides}~d^*\},$$
$$\d(e)=|\D(e)|\qquad\text{and}\qquad \d^*(e)=|\D^*(e)|.$$
With this notation, we have
\equat\label{eq:max-dim}
\d(e)=\max_{w \in W} \bigl(\dim V(w,\z_e)\bigr).
\endequat
In particular, $\z_e$ is an eigenvalue of some element of $W$
if and only if $\d(e) \neq 0$ that is, if and only if $e \in \{1,2,3,4,5,6,8,9,12\}$.
In this case, we fix an element $w_e$ of $W$ of minimal order such that
$$\dim V(w_e,\z_e)=\d(e).$$
We set for simplification $V(e)=V(w_e,\z_e)$ and $W(e)=W[V(e)]=W_{V(e)}^\setw/W_{V(e)}^\ptw$.

If $f \in \CM[V]$, we denote by $f^{[e]}$ its restriction to
$V(e)$. Note that if $d \not\in \D(e)$, then $f_d^{[e]}=0$ by
Lemma~\ref{lem:trivial}.

\bigskip

\begin{theo}[Springer, Lehrer-Springer]\label{theo:springer}
Assume that $\d(e) \neq 0$. Then:
\begin{itemize}
\itemth{a} If $w \in W$, then there exists $x \in W$ such that
$x (V(w,\z_e)) \subset V(e)$.

\itemth{b} $W(e)$ acts (faithfully) on $V(e)$
as a group generated by reflections.

\itemth{c} The family
$(f_d^{[e]})_{d \in \D(e)}$ is a family of fundamental
invariants of $W(e)$. In particular, the list of degrees
of $W(e)$ consists of the degrees of $W$ which are divisible
by $e$.

\itemth{d} We have
$$\bigcup_{w \in W} V(w,\z_e)=\bigcup_{x \in W} x(V(e)) =
\{v \in V~|~\forall~d \in \Deg(W) \setminus \D(e),~f_d(v)=0\}.$$

\itemth{e} $\d^*(e) \ge \d(e)$ with equality if and only if
$W_{V(e)}^\ptw = 1$.

\itemth{f} If $\d^*(e)=\d(e)$, then $w_e$ has order $e$,
$W(e)=W_{V(e)}^\setw=C_W(w_e)$ and the multiset of eigenvalues
(with multiplicity) of $w_e$ is
equal to $\multiset{\z_e^{1-d}}_{d \in \Deg(W)}$.
Moreover, if $w$ is such that $\dim V(w,\z_e)=\d(e)$, then
$w$ is conjugate to $w_e$.
\end{itemize}
\end{theo}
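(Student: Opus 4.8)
The plan is to treat these as the foundational package of Springer and Lehrer-Springer theory, taking the dimension formula~\eqref{eq:max-dim}, namely $\d(e)=\max_{w}\dim V(w,\z_e)$, as the one external input and deducing (a)--(f) from it by invariant-theoretic geometry. Throughout I write $\pi:V\to V/W$ for the quotient morphism attached to the fundamental invariants, so that $V/W\simeq\CM^6$ with coordinates $z_d=f_d$ ($d\in\Deg(W)$), and I set $L=\{z_d=0~\text{for}~d\notin\D(e)\}$, a coordinate subspace of dimension $\d(e)$. The logical order I would follow is (d)$\Rightarrow$(a), then (c)$\Rightarrow$(b), and finally the harder pair (e)$\Rightarrow$(f).

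I would begin with (d) and (a). The inclusion $\bigcup_w V(w,\z_e)\subseteq\{v:f_d(v)=0~\text{for}~d\notin\D(e)\}$ is immediate from Lemma~\ref{lem:trivial}, since $w(v)=\z_e v$ and $\z_e^d\neq 1$ when $e\nmid d$. Writing $Z$ for the right-hand locus, note that $Z=\pi^{-1}(L)$ and that $Z$ is cut out by the $6-\d(e)$ algebraically independent polynomials $\{f_d:d\notin\D(e)\}$, hence is a complete intersection of pure dimension $\d(e)$. Now $\pi$ restricts to a finite morphism $V(e)\to V/W$; since $\dim V(e)=\d(e)=\dim L$ by~\eqref{eq:max-dim} and $V(e)\subseteq Z$, the image $\pi(V(e))$ is a closed $\d(e)$-dimensional subset of $L$, hence all of $L$. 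Because the fibres of $\pi$ are exactly the $W$-orbits, this forces $Z=\pi^{-1}(L)=W\!\cdot\!V(e)=\bigcup_{x\in W}x(V(e))$, which is the middle equality of (d); combined with the two inclusions (each $x(V(e))$ being some $V(xw_ex^{-1},\z_e)$) all three sets of (d) coincide. Finally (a) drops out: any $V(w,\z_e)$ is an irreducible linear subspace contained in the finite union $\bigcup_x x(V(e))$, hence contained in a single $x_0(V(e))$, so $x_0^{-1}$ carries it into $V(e)$.

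For (c) and (b) I would analyse $\pi|_{V(e)}:V(e)\to L$. By Lemma~\ref{lem:trivial} the only surviving restrictions are $(f_d^{[e]})_{d\in\D(e)}$, there are exactly $\d(e)=\dim V(e)$ of them, and surjectivity of $\pi|_{V(e)}$ onto $L$ shows they are algebraically independent and $W(e)$-invariant. The substantive point is that the generic fibre of $\pi|_{V(e)}$ is a single $W(e)$-orbit; granting this (it is the geometric core of Springer's theorem), one gets $\CM[V(e)]^{W(e)}=\CM[f_d^{[e]}:d\in\D(e)]$, a polynomial algebra, so $V(e)/W(e)\simeq L$ is an affine space. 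The converse part of the Chevalley--Shephard--Todd theorem then yields (b), that $W(e)$ is generated by reflections, and simultaneously (c): the $f_d^{[e]}$ are fundamental invariants and the degrees of $W(e)$ are precisely $\{d\in\Deg(W):e\mid d\}$.

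The hard part is (e) and (f), where the genuinely deep Springer/Lehrer-Springer input enters. For (e) I would invoke the theory dual to the degrees: the codegrees of the reflection group $W(e)$ on $V(e)$ are $\d(e)$ in number and, by the Lehrer-Springer restriction analysis of the basic derivations (the module governing codegrees), they form a subset of $\{d^*\in\Codeg(W):e\mid d^*\}$; this gives $\d(e)\le\d^*(e)$, with equality exactly when no codegree is lost, which is the condition that $V(e)$ contain a regular vector, i.e. that $W_{V(e)}^{\ptw}=1$. Given (e), part (f) is Springer's regular-element package: in the equality case $w_e$ admits a regular $\z_e$-eigenvector, hence is $\z_e$-regular, so it has order exactly $e$, its centralizer coincides with the setwise stabilizer (the pointwise one being trivial, so $W(e)=W_{V(e)}^{\setw}=C_W(w_e)$), its eigenvalue multiset on $V$ is $\multiset{\z_e^{1-d}}_{d\in\Deg(W)}$ by Springer's eigenvalue formula, and any $w$ with $\dim V(w,\z_e)=\d(e)$ is conjugate to $w_e$ by the uniqueness of regular elements up to conjugacy. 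I expect the codegree inequality in (e) to be the main obstacle: unlike (a)--(d), it cannot be extracted from the quotient geometry alone and requires the anti-invariant/derivation side of the theory, which is exactly the contribution of Lehrer-Springer beyond Springer's original regular-element results.
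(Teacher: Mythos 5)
The paper does not prove this theorem at all: it is recalled as background, with the proofs delegated wholesale to Springer \cite{springer}, Lehrer--Springer \cite{lehrerspringer1,lehrerspringer2} and the simplified treatment in \cite{lehrermichel}. So there is no in-paper argument to compare against; what can be judged is whether your reconstruction is a viable proof. The parts you actually argue are correct and follow the standard route: (d) via Lemma~\ref{lem:trivial} plus finiteness of the quotient map and the dimension count $\dim V(e)=\d(e)=\dim L$ (so that $\pi(V(e))$, being closed and $\d(e)$-dimensional inside the irreducible $\d(e)$-dimensional $L$, is all of $L$, whence $\pi^{-1}(L)=W\cdot V(e)$); (a) from (d) by irreducibility of eigenspaces; the deduction of (b) and (c) from the single-orbit property via the converse of Chevalley--Shephard--Todd; and the deduction of (f) from (e) through the regular-element formalism (order, centralizer $=$ setwise stabilizer when the pointwise stabilizer is trivial, eigenvalue multiset, conjugacy).

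However, as a self-contained proof the proposal has two genuine gaps, precisely at the points you flag. First, the assertion that the generic fibre of $\pi|_{V(e)}:V(e)\to L$ is a single $W(e)$-orbit is not a consequence of the surrounding geometry; it is the central content of Springer's theorem and requires the degree/Poincar\'e-series count showing $|W(e)|=\prod_{d\in\D(e)}d$ (equivalently, that the degree of $V(e)\to L$ equals $|W(e)|$). Writing ``granting this'' leaves (b) and (c) unproved, and (c) is what the paper actually uses (e.g.\ in Example~\ref{ex:stab-cyclique} and the proof of Lemma~\ref{lem:ADE}). Second, in (e) the statement that the codegrees of $W(e)$ form a subset of $\D^*(e)$ is itself the Lehrer--Springer theorem on restriction of the module of basic derivations, not a routine restriction argument parallel to the one for degrees; the inequality $\d^*(e)\ge\d(e)$ and its equality criterion cannot be extracted from the quotient geometry alone. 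Since the paper handles both points by citation, your text is best read as a correct reduction of (a), (d) and (f) to (b), (c), (e) together with the two classical inputs, rather than as a proof of the theorem.
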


\bigskip

\begin{exemple}\label{ex:stab-cyclique}
Let $e \in \{8,9,12\}$. Then $\d(e)=\d^*(e)=1$.
So $V(e)$ is a line in $V$, and can be viewed as an
element of $\PM(V)$. It follows from the above results that
$$W_{V(e)}=\langle w_e \rangle$$
(see for instance~\cite[Rem.~3.14]{bonnafe sarti 1} for a proof).

Moreover, by Theorem~\ref{theo:springer}(f), we have $\det(w_e)=\z_e^{-36}$,
so
$$w_9, w_{12} \in W'\qquad\text{and}\qquad w_8 \not\in W'.$$
In particular, if we denote by $p_e$ (resp. $q_e$) the image of $V(e)$ in $\PM(V)/W'$
(resp. $\PM(V)/W$), then the morphism $\o$ is unramified (resp. ramified) over $q_9$ and $q_{12}$
(resp. $p_8$). This implies also that $\s(p_9) \neq p_9$ and $\s(p_{12}) \neq p_{12}$.\finl
\end{exemple}

\bigskip

\begin{exemple}\label{ex:w5}
Note that $\d^*(5)=2 > \d(5)=1$ while, if $e \in \{1,2,3,4,6,8,9,12\}$, then
$\d^*(e)=\d(e)$. If $x$ is an element of order $5$, then $x$ admits a primitive
fifth root of unity as eigenvalue, so admits $\z_5$ as an eigenvalue because
$x$ is represented by a matrix with rational coefficients. So we can take $w_5=x$,
and so $w_5$ has order $5$.

The above argument shows that the multiset of eigenvalues of $w_5$ is
$\multiset{1,1,\z_5,\z_5^2,\z_5^3,\z_5^4}$. Now, if $w$ is an element of $W$
such that $w(V(5))=V(5)$, then, since $w$ is defined over $\QM$, it stabilizes
the $\z_5^k$-eigenspace of $w$ for $k \in \{1,2,3,4\}$. Therefore, it stabilizes
the sum $E$ of these $4$ eigenspaces. But $E=\Ker(\Id_V + w_5 + w_5^2+w_5^3+w_5^4)$
is defined over $\QM$ (and so over $\RM$) so its orthogonal $E^\perp$ in $V$
with respect to the bilinear form $\langle,\rangle$ satisfies $E \oplus E^\perp=V$
(because its restriction to $\RM^6$ is positive definite). This shows that $w$ and $w_5$ stabilize
also $E^\perp$, which is necessarily equal to $\Ker(w_5-\Id_V)$. In particular
$w$ centralizes $w_5$. But, by the fourth line of Computation~\ref{comp:order 5},
we have $C_{W'}(w_5)=\langle w_5 \rangle$. So we have proved that
$$W_{V(5)}'=\langle w_5 \rangle.$$
This fact will be used in the proof of Theorem~\ref{theo:k3}.\finl
\end{exemple}

\bigskip

\section{K3 surfaces}\label{sec:k3}

\medskip

Let
$$\XC=\ZC(f_2,f_6,f_8)$$
and, for $\l$, $\mu \in \CM$, set
$$\YC_{\l,\mu}=\ZC(f_5,f_6+\l f_2^3,f_8 + \mu f_2^4).$$
Note that the invariants $f_2$ and $f_5$ are uniquely defined up to a scalar and that,
up to a scalar, every fundamental invariant of degree $6$ (resp. $8$) is of the form
$f_6 + \l f_2^3$ (resp. $f_8+\mu f_2^4 + \nu f_6f_2$) for some $\l \in \CM$
(resp. $\mu$, $\nu \in \CM$). But $\ZC(f_2,f_6+\l f_2^3,f_8+\mu f_2^4 + \nu f_6f_2)=\XC$
and $\ZC(f_5,f_6+\l f_2^3,f_8+\mu f_2^4 + \nu f_6f_2)=\YC_{\l,\mu-\l\nu}$. So this shows in particular
that $\XC$ does not depend on the choices of the fundamental invariants of $W$.
By construction, $\XC$ and $\YC_{\l,\mu}$ are $W$-stable.

\bigskip

\begin{prop}\label{prop:XY}
With the above notation, we have:
\begin{itemize}
\itemth{a} $\XC$ is a smooth irreducible surface, which is a complete intersection in $\PM(V))$.

\itemth{b} If $\l$, $\mu \in \CM$, then $\YC_{\l,\mu}$ has pure dimension $2$, and so is a complete
intersection in $\PM(V)$. If it is smooth or has only ADE singularities,
then it is irreducible.

\itemth{c} The set of $(\l,\mu) \in \CM^2$ such that $\YC_{\l,\mu}$ is smooth is a non-empty open
subset of $\CM^2$.
\end{itemize}
\end{prop}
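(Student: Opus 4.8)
The plan is to treat the three statements in turn, reducing the geometry to a dimension count via the quotient isomorphism~\eqref{eq:quotient}, to the Jacobian criterion controlled by the reflection arrangement, and to standard connectedness properties of complete intersections; the genuinely hard inputs are two effective smoothness verifications, which I expect to require explicit computation with the invariants.

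For~(a) I would first settle the dimension. The locus $\{Z_2=Z_6=Z_8=0\}$ inside $\PM(2,5,6,8,9,12)$ is the weighted projective subspace $\PM(5,9,12)$, of dimension~$2$, and by~\eqref{eq:quotient} (where $Z_d$ corresponds to $f_d$) its preimage under the finite quotient morphism $q\colon\PM(V)\to\PM(V)/W$ is exactly $\XC$. Since $q$ is finite, $\dim\XC=\dim\PM(5,9,12)=2$; as $\XC$ is cut out in $\PM(V)$ by three equations and has codimension~$3$, it is a complete intersection. For smoothness, recall the classical fact that the Jacobian determinant of a system of fundamental invariants of a real reflection group equals, up to a nonzero scalar, the product $\Jac=\prod_{H\in\AC}\alpha_H$ (both are relative invariants for the character $\e$, of degree $\sum_{d\in\Deg(W)}(d-1)=36$ by~\eqref{eq:ref}, and such invariants span a line). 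If $[v]\in\XC$ were singular, the three rows $\drm_v f_2$, $\drm_v f_6$, $\drm_v f_8$ would be linearly dependent, so the full $6\times 6$ Jacobian determinant would vanish at $v$, forcing $\Jac(v)=0$; hence $\mathrm{Sing}(\XC)\subseteq\XC\cap\bigcup_{H\in\AC}H$. Since all reflections of $W$ are conjugate, $W$ acts transitively on $\AC$, and it suffices to verify that $\XC$ is smooth along its intersection with a single reflection hyperplane — a finite check I would carry out by computer. Finally, a complete intersection of positive dimension in projective space is connected, so a smooth such surface is irreducible; this gives the last assertion of~(a).

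For~(b) the key point is that the three defining forms remain a regular sequence for every $(\l,\mu)$. Writing $g_6=f_6+\l f_2^3$ and $g_8=f_8+\mu f_2^4$, the substitution $(f_6,f_8)\mapsto(g_6,g_8)$ is triangular and invertible, so $\CM[V]^W=\CM[f_2,f_5,g_6,g_8,f_9,f_{12}]$ by~\eqref{eq:invariants}; thus $f_2,f_5,g_6,g_8,f_9,f_{12}$ are again algebraically independent and, being part of a polynomial generating set, $f_5,g_6,g_8$ form a regular sequence in $\CM[V]^W$. As $\CM[V]$ is a free — hence flat — module over $\CM[V]^W$ and $\CM[V]/(f_5,g_6,g_8)\neq 0$, this sequence stays regular in $\CM[V]$; therefore $\YC_{\l,\mu}$ has pure codimension~$3$, i.e. pure dimension~$2$, and is a complete intersection. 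For irreducibility under the stated hypothesis, recall that a complete intersection is Cohen--Macaulay, hence connected in codimension~$1$: were $\YC_{\l,\mu}=Y_1\cup Y_2$ a decomposition into surfaces, $Y_1\cap Y_2$ would have dimension $\ge 1$ and would lie in the singular locus, contradicting the fact that ADE (or absent) singularities are isolated. Hence $\YC_{\l,\mu}$ is irreducible.

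For~(c), openness is formal: the set of $(\l,\mu)$ for which $\YC_{\l,\mu}$ is singular is the image in $\CM^2$ of the closed subset of $\PM(V)\times\CM^2$ where the three defining forms together with all $3\times 3$ minors of their Jacobian vanish simultaneously, and since $\PM(V)$ is proper this image is closed, so the smooth locus is open. (Equivalently, by~(b) the family is flat with constant Hilbert polynomial, and smoothness of fibres is an open condition.) Non-emptiness is the one point I do not see how to obtain without computation: I would exhibit an explicit pair $(\l_0,\mu_0)$ and check, by the Jacobian criterion, that the corresponding surface has empty singular locus — the search being confined, by the argument of~(a), to the reflection arrangement. The main obstacle throughout is precisely these two effective smoothness checks, for $\XC$ and for one $\YC_{\l_0,\mu_0}$: they rest on the explicit, and fairly large, fundamental invariants of $W$, and I would perform them with {\sc magma} as in the appendices.
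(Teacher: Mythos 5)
Your proposal is correct; the overall architecture (dimension count via invariant theory, a computer smoothness check propagated by the $W$-action, connectedness of complete intersections for irreducibility) matches the paper's, but two of your key reductions are genuinely different. For the smoothness of $\XC$ in~(a), the paper verifies by computer that the affine chart $x_6\neq 0$ is smooth, so that the singular locus lies in $\PM(H)$ with $H=\{x_6=0\}$, and then kills it using only the irreducibility of $V$ (namely $\bigcap_{w\in W}w(H)=0$); you instead invoke the classical identity between the Jacobian of the fundamental invariants and $\Jac=\prod_{H\in\AC}\a_H$ to confine the singular locus to the reflection arrangement, and use transitivity of $W$ on $\AC$ to reduce to one hyperplane section. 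Both reductions end in a Gr\"obner-basis computation of comparable size; the paper's needs no discriminant theory, while yours localizes the check to a geometrically meaningful locus and would survive even if $W$ did not act irreducibly on the ambient hyperplane arrangement's complement. For the pure-dimensionality in~(b), the paper argues through the finite quotient $\YC_{\l,\mu}/W\simeq\PM(2,9,12)$ (plus Krull's height theorem for purity), whereas your regular-sequence argument --- $f_5,\,f_6+\l f_2^3,\,f_8+\mu f_2^4$ are part of a polynomial generating set of $\CM[V]^W$, hence a regular sequence there, hence in the flat extension $\CM[V]$ --- is purely algebraic and gives purity directly via unmixedness; this is arguably cleaner and makes no use of the quotient at all. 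Your irreducibility argument in~(b) via connectedness in codimension one is also slightly stronger than needed (connectedness plus the normality, hence analytic irreducibility, of ADE points already suffices), but it is correct. Finally, your treatment of~(c) (properness for openness, an explicit pair checked by computer for non-emptiness) is exactly the paper's, which takes $(\l,\mu)=(0,0)$.
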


\bigskip

%

\begin{proof}
Through the isomorphism~\eqref{eq:quotient}, we have
$$\XC/W \simeq \PM(5,9,12)\qquad\text{and}\qquad \YC_{\l,\mu}/W \simeq \PM(2,9,12),$$
so $\XC/W$ and $\YC_{\l,\mu}/W$ are irreducible of dimension $2$. Since $W$ is finite,
this implies that $\XC$ and $\YC_{\l,\mu}$ are of pure dimension $2$ and so are complete
intersections. In particular, they are connected~\cite[Chap.~II,~Exer.~8.4(c)]{hartshorne}.

\medskip

(a) By Computation~\ref{comp:1}, the open affine chart of $\XC$ defined by $x_6 \neq 0$
is smooth. This shows that the singular locus $\SC$ of $\XC$ is contained in the projective
hyperplane $\PM(H)$, where $H=\{(x_1,x_2,x_3,x_4,x_5,x_6) \in \CM^6~|~x_6=0\}$.
Since $W$ acts on $\XC$, we have that $\SC$ is contained in $\PM(\cap_{w \in W} w(H))$.
But $\cap_{w \in W} w(H)$ is a $W$-stable proper subspace of $V$, so it is equal to $\{0\}$ since
$W$ acts irreducibly on $V$. Hence $\SC=\vide$.
This shows that $\XC$ is smooth and, in particular, irreducible (because it is connected).

\medskip

(b) We already know that $\YC_{\l,\mu}$ is connected. If moreover it admits only
ADE singularities, then it is necessarily irreducible.

\medskip

(c) Let $U$ be the set of $(\l,\mu) \in \CM^2$ such that $\YC_{\l,\mu}$ is smooth.
It is clear that $U$ is open, so we only need to prove that it is non-empty.
With the particular choice of fundamental invariants given in Appendix~\ref{appendix:magma},
we have that the open affine chart of $\YC_{0,0}$ defined by $x_6 \neq 0$ is smooth
by Computation~\ref{comp:1}. The same argument as in~(a) allows to conclude that $\YC_{0,0}$
is smooth and irreducible.
\end{proof}

\bigskip

We are now ready to state the first main result of this paper:

\bigskip

\begin{theo}\label{theo:k3}
Let $\l$, $\mu \in \CM$ be such that $\YC_{\l,\mu}$ admits only ADE singularities and let $\VC$
be one of the varieties $\XC$ or $\YC_{\l,\mu}$. Then $\VC/W'$ is a K3 surface with only ADE singularities
and its minimal smooth resolution $\VCt$ is a smooth K3 surface.
\end{theo}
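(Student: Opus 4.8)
The plan is to verify the three defining properties of a (possibly singular) K3 surface for $\VC/W'$: that it is normal with only ADE singularities, that $\Hrm^1(\VC/W',\OC)=0$, and that its dualizing sheaf is trivial. Since ADE singularities are rational and their minimal resolution is crepant, these three facts then transfer to $\VCt$ and show it is a smooth K3 surface. The key structural input is that $W'=W\cap\SL_\CM(V)$ contains no pseudo-reflection of $V$ (a pseudo-reflection has determinant $\neq 1$), so that the quotient map $\pi:\VC\to\VC/W'$ is étale in codimension one away from the curves pointwise-fixed by nontrivial elements; along such a fixed curve the cyclic inertia acts on the normal line as a pseudo-reflection, hence the quotient stays \emph{smooth} there and merely contributes ramification. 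Consequently the singularities of $\VC/W'$ are isolated, supported at the images of the finitely many points of $\VC$ whose $W'$-stabiliser acts non-trivially on the tangent plane, together with (when $\VC=\YC_{\l,\mu}$) the images of the ADE points of $\YC_{\l,\mu}$ itself.

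The vanishing of the irregularity is the easiest point. Because $\VC$ is a complete intersection surface in $\PM(V)$ (Proposition~\ref{prop:XY}), the standard cohomology of complete intersections gives $\Hrm^1(\VC,\OC_\VC)=0$; taking $W'$-invariants yields $\Hrm^1(\VC/W',\OC)=\Hrm^1(\VC,\OC_\VC)^{W'}=0$, and since ADE singularities are rational this equals $\Hrm^1(\VCt,\OC_{\VCt})$, so $q(\VCt)=0$.

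To see the singularities are of ADE type I would argue locally at a fixed point $[v]$ with stabiliser $H\subset W'$. For each $g\in H$ with $g(v)=\th_v(g)v$, Corollary~\ref{coro:action tangent} gives the eigenvalues of $g$ on $\Trm_{[v]}(\VC)$; multiplying them and using $\det_V(g)=1$ produces the clean formula $\det\bigl(g|_{\Trm_{[v]}(\VC)}\bigr)=\th_v(g)^{\,\sum_k d_k-6}$, where $(d_k)$ are the degrees of the defining invariants, so the exponent is $10$ for $\XC$ and $13$ for $\YC_{\l,\mu}$. One then isolates inside $H$ the subgroup generated by those elements acting as pseudo-reflections on the tangent plane; by Chevalley--Shephard--Todd the quotient by this subgroup is smooth, while the determinant formula constrains the weights so that the residual group acts through $\SL_2$, whence $[v]$ maps to a du Val singularity. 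For instance, for $\VC=\XC$, at the image of the Springer line $V(5)$ (resp. $V(9)$), where the stabiliser is $\langle w_5\rangle$ (resp. $\langle w_9\rangle$) by Examples~\ref{ex:w5} and~\ref{ex:stab-cyclique}, Corollary~\ref{coro:action tangent} yields tangent weights $(\z_5,\z_5^{-1})$ (resp. $(\z_9^4,\z_9^6)$), producing an $A_4$ (resp. $A_2$) singularity. The complete list of fixed points and of the pointwise-fixed curves is obtained from Springer theory (Theorem~\ref{theo:springer}) and from the explicit invariants; this is where the {\sc magma} computations enter.

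Finally, for the triviality of the canonical sheaf I would exhibit a nowhere-vanishing holomorphic $2$-form. By the adjunction/residue description one has $\Hrm^0(\VC,\o_\VC)\cong(\CM[V]/I)_{\sum_k d_k-6}$, where $I$ is the ideal of $\VC$; computing the $W'$-invariant part from $\CM[V]^{W'}=\CM[f_2,f_5,f_6,f_8,f_9,f_{12},\Jac]$ shows it is one-dimensional, spanned by the class of $f_5^2$ for $\XC$ and of $f_2^2f_9$ for $\YC_{\l,\mu}$. Thus $p_g(\VCt)=1$, and with $q(\VCt)=0$ this gives $\chi(\OC_{\VCt})=2$. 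The remaining---and genuinely hardest---step is to check that this invariant form descends to a \emph{nowhere-vanishing} $2$-form on $\VC/W'$: writing $K_\VC=\pi^*K_{\VC/W'}+R$ with $R$ the ramification divisor of $\pi$ along the fixed curves, the zero divisor of the form upstairs (namely $2\,(\{f_5=0\}\cap\XC)$ in the case of $\XC$, and similarly for $\YC_{\l,\mu}$) must coincide with $R$, so that its image downstairs has no zeros and $\o_{\VC/W'}\simeq\OC$. Establishing this equality of divisors amounts to determining precisely the pointwise-fixed curves of $W'$ on $\VC$ together with their inertia, and is the main obstacle; it is settled by explicit computation. Once it is in place, crepancy of the ADE resolution gives $\o_{\VCt}\simeq\OC_{\VCt}$, and together with $q(\VCt)=0$ this shows that $\VCt$ is a smooth K3 surface.
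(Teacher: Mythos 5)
Your strategy is sound and reaches the conclusion by a genuinely different route from the paper. The paper never computes $q$ or exhibits a canonical section: it shows (A) that the smooth locus of $\VC/W'$ carries a symplectic form, (B) that the singularities are ADE, and (C) that the Euler characteristic is positive, then invokes the classification (a surface with a symplectic resolution is K3 or abelian) and excludes the abelian case by (C). The essential difference is how the nowhere-vanishing $2$-form is produced: the paper reads it off from the weighted-projective models $\XC/W'\subset\PM(5,3,4,12)$ and $\YC_{\l,\m}/W'\subset\PM(1,3,2,6)$, where the defining equation has degree equal to the sum of the weights, so adjunction in weighted projective space gives the form at no extra cost; you instead take the invariant sections $f_5^2$ and $f_2^2f_9$ of $\omega_\VC$ and must match their divisors with the ramification divisor $R$ of $\VC\to\VC/W'$ --- precisely the computation the weighted-projective adjunction hides. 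That matching does work out (for $\XC$, the $80$ translates of $\CC^\pm$ carry both $R$ and the zero divisor of $f_5^2$ with multiplicity $2$), but note that for $\YC_{\l,\m}$ it is more delicate: since $f_5$ and $f_9$ have odd degree, the involutions with eigenvalues $(1^2,(-1)^4)$ fix curves of $\YC_{\l,\m}$ pointwise (unlike on $\XC$, cf.\ Lemma~\ref{lem:order 2}), and these extra ramification curves are exactly what absorbs the factor $f_9$. Your exponents $10$ and $13$ and the formula $\det(g|_{\Trm_{[v]}(\VC)})=\th_v(g)^{\sum_k d_k-6}$ are correct, but be careful with the assertion that this formula alone forces the residual group (after dividing by the tangent-plane reflections) into $\SL_2(\CM)$: at $V(9)$ the weights $(\z_9^4,\z_9^6)$ have product $\z_9\neq 1$, and one only lands in $\SL_2(\CM)$ after quotienting by the order-$3$ reflection subgroup and recomputing the weights on the invariants --- this needs the case-by-case verification you delegate to the computer, or can be deduced a posteriori from the nowhere-vanishing form. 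Finally, your $q=0$ argument replaces the paper's softer Euler-characteristic step; both suffice, yours characterizing the K3 directly, the paper's only needing to rule out abelian surfaces.
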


\bigskip

\begin{proof}
The proof will be given in the next subsections, following the same lines as~\cite[Theo.~5.4]{bonnafe sarti 1}.
More precisely, if $\VC$ denotes one of the above varieties $\XC$ or $\YC_{\l,\mu}$,
we will prove in the next subsections the following three facts:
\begin{itemize}
 \itemth{A} The smooth locus of $\VC/W'$ admits a symplectic form (see Lemma~\ref{lem:symplectique});

 \itemth{B} The variety $\VC/W'$ has only ADE singularities (see Lemma~\ref{lem:ADE});

 \itemth{C} The Euler characteristic of $\VC/W'$ is positive (see Lemma~\ref{lem:euler}).
\end{itemize}
Therefore, by~(A) and~(B), the variety $\VC/W'$ is a symplectic singularity and its minimal smooth resolution
$\VCt$ is a crepant resolution, i.e. a symplectic resolution. So $\VCt$ admits a symplectic form.
By the classification of surfaces, this forces $\VCt$ to be a smooth K3 surface or an abelian
variety. But, by~(B), the Euler characteristic of $\VCt$ is greater than or equal
to the one of $\VC/W'$, so the Euler characteristic of $\VCt$ is positive by~(C).
Since an abelian variety has Euler characteristic $0$, this shows that
$\VCt$ is a smooth K3 surface.
\end{proof}

\bigskip

So it remains to prove the three facts~(A),~(B) and~(C) used in the above proof.

\medskip

\boitegrise{\vphantom{$\frac{A}{a}$}{\bf Notation.} For the rest of this section, we fix $\l$, $\mu$ in $\CM$
such that $\YC_{\l,\mu}$ admits only ADE singularities and we denote by $\VC$ a variety
which can be $\XC$ or $\YC_{\l,\mu}$.}{0.75\textwidth}

\smallskip

Recall that this implies that $\VC$ is irreducible and normal and, in particular, that $\VC/W'$ is
irreducible and normal too.

\medskip

\subsection{Symplectic form}
Through the isomorphism~\eqref{eq:quotient w'}, we have
$$\XC/W' \simeq \{[z_5 : z_9 : z_{12} : j] \in \PM(5,9,12,36)~|~j^2=P_\fb(0,z_5,0,0,z_9,z_{12})$$
$$\YC_{\l,\mu}/W' \simeq \{[z_2 : z_9 : z_{12} : j] \in
\PM(2,9,12,36)~|~j^2=P_\fb(z_2,0,-\l z_2^3,-\mu z_2^4,z_9,z_{12})\}.
\leqno{\text{and}}$$
But note that $\PM(5,9,12,36)=\PM(5,3,4,12)$, so there exists a unique homogeneous polynomial $Q_\fb \in \CM[Y_5,Y_3,Y_4]$
of degree $24$ (where $Y_i$ is endowed with the degree $i$) such that
$P_\fb(0,z_5,0,0,z_9,z_{12})=Q_\fb(z_5^3,z_9,z_{12})$. Hence,
\equat\label{eq:xw'}
\XC/W' \simeq \{[y_5 : y_3 : y_4 : j] \in \PM(5,3,4,12)~|~j^2=Q_\fb(y_5,y_3,y_4)\}.
\endequat
So the degree of the equation defining $\XC/W'$ (namely, $24$) is equal to the sum of
the weights of the projective space (namely, $5+3+4+12$).
By~\cite[Lem.~A.1]{bonnafe sarti 1}, this implies that the smooth locus of $\XC/W'$ is endowed with
a symplectic form.

On the other hand, $\PM(2,9,12,36)=\PM(1,9,6,18)=\PM(1,3,2,6)$ so there exists a unique homogeneous
polynomial $Q_\fb^{\l,\mu} \in \CM[Y_1,Y_3,Y_2]$ of degree $12$
(where $Y_i$ is endowed with the degree $i$) such that
$P_\fb(z_2,0,-\l z_2^3,-\mu z_2^4,z_9,z_{12})=Q_\fb^{\l,\mu}(z_2^3,z_9,z_{12})$. Hence,
\equat\label{eq:yw'}
\YC_{\l,\mu}/W' \simeq \{[y_1 : y_3 : y_2 : j] \in \PM(1,3,2,6)~|~j^2=Q_\fb^{\l,\mu}(y_1,y_3,y_2)\}.
\endequat
So the degree of the equation defining $\YC_{\l,\mu}/W'$ (namely, $12$) is equal to the sum of
the weights of the projective space (namely, $1+3+2+6$).
By~\cite[Lem.~5.4]{bonnafe sarti 1}, this implies that the smooth locus of $\XC/W'$ is endowed with
a symplectic form. Therefore, we have proved the following lemma, which corresponds to
the Fact~(A) stated in the proof of Theorem~\ref{theo:k3}:

\medskip

\begin{lem}\label{lem:symplectique}
The smooth locus of $\VC/W'$ admits a symplectic form.
\end{lem}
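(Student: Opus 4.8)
The plan is to handle both cases ($\VC=\XC$ and $\VC=\YC_{\l,\mu}$) by the same mechanism: exhibit $\VC/W'$ as a degree-$2$ cover of a weighted projective plane, realized as a hypersurface of the form $j^2=(\text{invariant polynomial})$ inside a three-dimensional weighted projective space, and then arrange the weights so that the degree of the defining equation equals the sum of the ambient weights. That numerical coincidence is exactly the adjunction-type condition under which the dualizing sheaf of the hypersurface is trivial, so that its smooth locus carries a nowhere-vanishing holomorphic $2$-form; this is the content of the cited lemmas from~\cite{bonnafe sarti 1}.

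First I would feed the defining equations of $\XC$ and of $\YC_{\l,\mu}$ into the isomorphism~\eqref{eq:quotient w'}. For $\XC=\ZC(f_2,f_6,f_8)$ the coordinates $z_2,z_6,z_8$ vanish, leaving $\XC/W'$ cut out in $\PM(5,9,12,36)$ by $j^2=P_\fb(0,z_5,0,0,z_9,z_{12})$; for $\YC_{\l,\mu}$ the substitution $f_5=0$, $f_6=-\l f_2^3$, $f_8=-\mu f_2^4$ leaves $z_2,z_9,z_{12},j$ and the equation $j^2=P_\fb(z_2,0,-\l z_2^3,-\mu z_2^4,z_9,z_{12})$ in $\PM(2,9,12,36)$.

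The main step is to put these two ambient weighted projective spaces in well-formed shape by dividing out the common factor shared by all but one of the weights. For $\XC$, the three weights $9,12,36$ share the factor $3$, which is coprime to the remaining weight $5$, so $\PM(5,9,12,36)\simeq\PM(5,3,4,12)$; weighted homogeneity of $P_\fb$ forces $P_\fb(0,z_5,0,0,z_9,z_{12})$ to be a polynomial $Q_\fb$ in $z_5^3,z_9,z_{12}$, and in the reduced space the equation $j^2=Q_\fb(y_5,y_3,y_4)$ has degree $24=5+3+4+12$, as in~\eqref{eq:xw'}. For $\YC_{\l,\mu}$ two successive reductions give $\PM(2,9,12,36)\simeq\PM(1,9,6,18)\simeq\PM(1,3,2,6)$, and the equation becomes $j^2=Q_\fb^{\l,\mu}(y_1,y_3,y_2)$ of degree $12=1+3+2+6$, as in~\eqref{eq:yw'}. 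In each case the degree of the hypersurface equals the sum of the ambient weights, and I would conclude by invoking~\cite[Lem.~A.1]{bonnafe sarti 1} for $\XC$ and~\cite[Lem.~5.4]{bonnafe sarti 1} for $\YC_{\l,\mu}$, each of which upgrades this degree-equals-sum-of-weights condition into a symplectic form on the smooth locus.

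The only delicate part is the weight bookkeeping: I must check at each reduction that the factor removed really is coprime to the surviving weight (otherwise the reduction is a finite quotient, not an isomorphism), and that the restricted relation $P_\fb$ genuinely factors through the expected power of the surviving low-weight coordinate ($z_5^3$ in the first case, $z_2^3$ in the second). Both are forced by congruence conditions coming from weighted homogeneity --- e.g.\ every monomial in $P_\fb(0,z_5,0,0,z_9,z_{12})$ has $z_5$-exponent divisible by $3$, since $2a\equiv 0 \pmod 3$ whenever $2a+9b+12c=72$ --- so there is no genuine obstruction beyond the arithmetic, and the entire conceptual weight rests on the two lemmas supplying the symplectic form.
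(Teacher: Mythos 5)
Your proposal is correct and follows essentially the same route as the paper: pass to the presentation $j^2=P_\fb(\dots)$ via~\eqref{eq:quotient w'}, reduce $\PM(5,9,12,36)$ to $\PM(5,3,4,12)$ and $\PM(2,9,12,36)$ to $\PM(1,3,2,6)$, observe that the degree of the defining equation ($24$, resp.\ $12$) equals the sum of the weights, and invoke the lemma of~\cite{bonnafe sarti 1}. Your explicit congruence check that the restricted relation factors through $z_5^3$ (resp.\ $z_2^3$) is a welcome addition the paper leaves implicit (note only the harmless slip that the weighted-degree equation should read $5a+9b+12c=72$ before reducing mod $3$).
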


\medskip

\begin{rema}\label{rem:branch}
In both cases, the ramification locus of the morphism $\o : \VC/W' \to \VC/W$ is given
by the equation $j=0$ (in the models given by equations~\eqref{eq:xw'} or~\eqref{eq:yw'}).\finl
\end{rema}

\bigskip

\subsection{Singularities}
We aim to prove here the following lemma, which corresponds to
the Fact~(B) stated in the proof of Theorem~\ref{theo:k3}:

\bigskip

\begin{lem}\label{lem:ADE}
The surface $\VC/W'$ has only ADE singularities.
\end{lem}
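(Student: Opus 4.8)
The plan is to show that each singular point of $\VC/W'$ is a Kleinian (i.e. ADE) singularity by realizing it as a quotient $\CM^2/H$ with $H\subset \Sb\Lb_2(\CM)$, the membership in $\Sb\Lb_2(\CM)$ being forced by the symplectic form produced in Lemma~\ref{lem:symplectique}. First I would treat the case $\VC=\XC$, where $\XC$ is smooth by Proposition~\ref{prop:XY}(a). Since $W'$ is finite and acts on the smooth surface $\XC$, the quotient $\XC/W'$ has only quotient singularities, and these are isolated: the non-free locus of $W'$ on $\XC$ is a finite union of curves and points, and a point whose stabilizer acts on the tangent plane as a pseudo-reflection maps to a smooth (merely ramified) point of the quotient. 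Thus the singular points of $\XC/W'$ are the images of the finitely many $[v]$ whose stabilizer $G=(W')_{[v]}$ has non-trivial image in the reflection-free quotient of its action on $\Trm_{[v]}(\XC)\simeq\CM^2$. Fixing such a point, Cartan linearization identifies the germ of $\XC/W'$ at its image with $\CM^2/G$; letting $G_0\trianglelefteq G$ be generated by the elements acting as pseudo-reflections on $\Trm_{[v]}(\XC)$, the Chevalley--Shephard--Todd theorem gives $\CM^2/G_0\simeq\CM^2$, hence $\CM^2/G\simeq\CM^2/H$ with $H=G/G_0$ acting freely in codimension one.

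It then remains to prove $H\subset\Sb\Lb_2(\CM)$, and this is where Lemma~\ref{lem:symplectique} is used. The nowhere-vanishing $2$-form on the smooth locus of $\XC/W'$ restricts, near the point in question, to a nowhere-vanishing form on $(\CM^2/H)\setminus\{0\}$; pulling it back along $\CM^2\to\CM^2/H$ (which is \'etale away from the origin, since $H$ is small) and extending over the origin by Hartogs yields an $H$-invariant, nowhere-vanishing holomorphic $2$-form on $\CM^2$. Invariance of this form at the fixed point $0$ forces $\det(h)=1$ for every $h\in H$, so $H\subset\Sb\Lb_2(\CM)$ and $\CM^2/H$ is Kleinian, i.e. ADE.

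The pseudo-reflection reduction is genuinely needed and is the delicate point: at the point $V(9)$ of Example~\ref{ex:stab-cyclique} the stabilizer $\langle w_9\rangle$ contains the order-$3$ pseudo-reflection $w_9^3$, so the determinant of $w_9$ on the full tangent plane equals $\z_9\neq 1$, and only after dividing out $\langle w_9^3\rangle$ does the residual action land in $\Sb\Lb_2(\CM)$ (producing an $A_2$ singularity). A naive determinant computation on $\Trm_{[v]}(\XC)$ would wrongly suggest a non-ADE singularity here. As an alternative, entirely concrete, route one can use Corollary~\ref{coro:action tangent} together with the Springer-theoretic description of the stabilizers (Examples~\ref{ex:stab-cyclique} and~\ref{ex:w5}) to list the eigenvalues of a generator of $G$ on $\Trm_{[v]}(\XC)$, strip off the pseudo-reflection factor, and read off both the $\Sb\Lb_2$ property and the precise ADE type.

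For $\VC=\YC_{\l,\mu}$ the argument above disposes of the singular points arising from smooth points of $\YC_{\l,\mu}$ with non-trivial stabilizer. The remaining singular points of $\YC_{\l,\mu}/W'$ are the images of the ADE singularities of $\YC_{\l,\mu}$ itself; I would handle these by checking that the points with non-trivial $W'$-stabilizer are smooth points of $\YC_{\l,\mu}$, so that its ADE points lie in the free locus and map isomorphically to ADE points of the quotient, and, failing that, by passing to the $W'$-equivariant minimal resolution $\YC^{+}\to\YC_{\l,\mu}$ (functoriality of Du Val resolutions), which is smooth and crepant, and applying the smooth-surface argument to $\YC^{+}$. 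The main obstacle throughout is exactly the presence of pseudo-reflections inside the stabilizers: one must perform the Chevalley--Shephard--Todd reduction before invoking the symplectic form, and in the $\YC$ case one must control the interaction between the inherited ADE singularities and the group action. Once the small quotient has been isolated, Lemma~\ref{lem:symplectique} closes the argument uniformly.
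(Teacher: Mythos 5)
Your argument for $\VC=\XC$ is correct and takes a genuinely different route from the paper's. The paper never linearizes the $W'$-action on $\XC$ directly: it factors through the double cover $\o:\VC/W'\to\VC/W\simeq\PM(a,b,c)$, invokes [Coro.~B.7] of \cite{bonnafe sarti 1} to dispose of all points lying over the \emph{smooth} locus of $\VC/W$, and then treats the two or three singular points of the weighted projective plane by hand --- $q_9$ and $q_{12}$ via the (un)ramification of $\o$ read off from Example~\ref{ex:stab-cyclique}, and $q_5$ via Steinberg's theorem, Example~\ref{ex:w5} and Corollary~\ref{coro:action tangent}. Your route (Cartan linearization, reduction to a small subgroup $H$ by the Chevalley--Shephard--Todd/Prill argument, then $H\subset\Sb\Lb_2(\CM)$ forced by pulling the symplectic form of Lemma~\ref{lem:symplectique} back along the \'etale-in-codimension-zero cover and extending by Hartogs) is self-contained, avoids the external citation, and correctly isolates the pseudo-reflection issue as the delicate point; there is no circularity, since Lemma~\ref{lem:symplectique} is proved from the explicit weighted-hypersurface models independently of Lemma~\ref{lem:ADE}. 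What you lose relative to the paper is that its proof simultaneously pins down the precise ADE types (Lemmas~\ref{lem:p9 p12} and~\ref{lem:p5}), which are needed later; your argument would have to be supplemented by the eigenvalue computations you sketch as the ``concrete route'' to recover them.

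The case $\VC=\YC_{\l,\mu}$ is where your proposal has a genuine gap. Your first option --- checking that the ADE points of $\YC_{\l,\mu}$ have trivial $W'$-stabilizer --- cannot be carried out: $(\l,\mu)$ is arbitrary subject only to the ADE hypothesis, and nothing prevents a singular point of $\YC_{\l,\mu}$ from lying on the fixed locus of some $w\in W'$. Your second option does not close the argument either: knowing that $\YC^{+}/W'$ has only ADE singularities (where $\YC^{+}\to\YC_{\l,\mu}$ is the equivariant minimal resolution) does not by itself imply the same for $\YC_{\l,\mu}/W'$; the induced map $\YC^{+}/W'\to\YC_{\l,\mu}/W'$ is merely proper and birational, and a normal surface germ carrying a nowhere-vanishing $2$-form on its punctured neighbourhood need not be ADE (simple elliptic singularities are the standard obstruction), so the symplectic form alone cannot finish without a rationality input. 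The correct completion in your framework is to note that the germ of $\YC_{\l,\mu}/W'$ at any point is $(\text{germ of }\YC_{\l,\mu}\text{ at }y)/G$ with $G=(W')_y$ finite and the germ at $y$ equal to $\CM^2/\Gamma$ for some finite $\Gamma\subset\Sb\Lb_2(\CM)$; since $\CM^2\setminus\{0\}$ is the universal cover of the punctured germ, the $G$-action lifts, and the quotient germ becomes $\CM^2/\tilde{G}$ for a finite $\tilde{G}\subset\GL_2(\CM)$ --- a quotient singularity, hence rational --- after which your small-subgroup plus symplectic-form argument applies verbatim. Either add this lifting step or, for this case, fall back on the paper's route through $\VC/W$.
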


\bigskip

\begin{proof}
First, as $\VC$ has only ADE singularities and $W'$ has index $2$ in $W$,
every point of $\VC/W'$ lying above a smooth point of $\VC/W$ is smooth or
is an ADE singularity by~\cite[Coro.~B.7]{bonnafe sarti 1}. So it remains only to study the points
lying above the singular points of $\VC/W$.

The singular points of $\XC/W \simeq \PM(5,3,4)$ are $q_5=[1 : 0 : 0]$, $q_9=[0 : 1 : 0]$ and $q_{12}=[0 : 0 : 1]$.
The singular points of $\YC_{\l,\mu}/W \simeq \PM(1,3,2)$ are $q_9=[0 : 1 : 0]$ and $q_{12}=[0 : 0 : 1]$.
Note that the notation $q_9$ and $q_{12}$ is consistent with Example~\ref{ex:stab-cyclique},
as they correspond to the points $q_9$ and $q_{12}$ defined in this example through the
embeddings $\VC/W \injto \PM(V)/W \simeq \PM(2,5,6,8,9,12)$. Still by Example~\ref{ex:stab-cyclique},
the morphism $\VC/W' \to \VC/W$ is unramified above $q_9$ and $q_{12}$. Therefore, the points $p_9$
and $p_9'=\s(p_9)$ (resp. $p_{12}$ and $p_{12}'=\s(p_{12})$) of $\VC/W'$ are distinct
and have the same type of singularities
than the point $q_9$ (resp. $q_{12}$) of $\VC/W$. But $q_9$ is a singular point of type $A_2$
of $\PM(5,3,4)$ or $\PM(1,3,2)$ and $q_{12}$ is a singular point of type $A_3$ (resp. $A_1$)
of $\PM(5,3,4)$ (resp. $\PM(1,3,2)$). This shows that the following results holds:

\medskip

\begin{quotation}
\begin{lem}\label{lem:p9 p12}
We have:
\begin{itemize}
\itemth{a} The points $p_9$ and $p_9'$ are $A_2$ singularities of $\XC/W'$ and the points
$p_{12}$ and $p_{12}'$ are $A_3$ singularities of $\XC/W'$.

\itemth{b} The points $p_9$ and $p_9'$ are $A_2$ singularities of $\YC_{\l,\mu}/W'$ and the points
$p_{12}$ and $p_{12}'$ are $A_1$ singularities of $\YC_{\l,\mu}/W'$.
\end{itemize}
\end{lem}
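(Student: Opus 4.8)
The plan is to transport the singularity type across the double cover $\o : \VC/W' \to \VC/W$ and then to recognise the resulting germs on a weighted projective plane as Du Val singularities.

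First I would invoke Example~\ref{ex:stab-cyclique}: since $w_9, w_{12} \in W'$, the morphism $\o$ is unramified over $q_9$ and $q_{12}$, and moreover $\s(p_9) \neq p_9$ and $\s(p_{12}) \neq p_{12}$. As $\o$ is the quotient map by $\mub_2 = W/W'$, being unramified over $q_e$ (for $e \in \{9,12\}$) means that $\mub_2$ acts freely on the fibre; that fibre therefore consists of the two points interchanged by $\s$, namely $\{p_9,p_9'\}$ and $\{p_{12},p_{12}'\}$, and $\o$ is étale there. Consequently the completed local ring at each of $p_e$, $p_e'$ is isomorphic to the one at $q_e$, so $p_9,p_9'$ carry the singularity type of $q_9$ and $p_{12},p_{12}'$ that of $q_{12}$. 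It thus suffices to determine the types of the two points $q_9$ and $q_{12}$ of $\VC/W$.

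Next I would use the isomorphisms $\XC/W \simeq \PM(5,3,4)$ and $\YC_{\l,\mu}/W \simeq \PM(1,3,2)$ recorded above. Both targets are well-formed weighted projective planes, so $q_9$ and $q_{12}$ are coordinate vertices whose local analytic model is a cyclic quotient singularity: at a vertex of weight $a$ in $\PM(a_0,a_1,a_2)$ the germ is $\CM^2 / \mub_a$, with $\z_a$ acting diagonally by the two remaining weights. Here $q_9$ is the vertex of weight $3$, while $q_{12}$ is the vertex of weight $4$ for $\XC$ and of weight $2$ for $\YC_{\l,\mu}$.

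Finally I would reduce each germ to normal form. For a cyclic quotient $\tfrac1n(a,b)$ with $\gcd(a,n)=1$, replacing $\z_n$ by $\z_n^{a^{-1}}$ turns it into $\tfrac1n(1,a^{-1}b)$, and this is the Du Val singularity $A_{n-1}$ (with equation $xy = z^n$) exactly when $a+b \equiv 0 \pmod n$. The four cases then read: for $\XC$, $q_9 \leadsto \tfrac13(5,4)$ with $5+4 \equiv 0 \pmod 3$, hence $A_2$, and $q_{12} \leadsto \tfrac14(5,3)$ with $5+3 \equiv 0 \pmod 4$, hence $A_3$; for $\YC_{\l,\mu}$, $q_9 \leadsto \tfrac13(1,2)$, hence $A_2$, and $q_{12} \leadsto \tfrac12(1,3) = \tfrac12(1,1)$, hence $A_1$. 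These are precisely the asserted types. I expect no serious obstacle; the one point demanding care is this final normalization of the cyclic quotient data, i.e. rescaling the generator so that the two weights become opposite modulo $n$, and the displayed congruences make that step immediate.
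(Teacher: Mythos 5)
Your proof is correct and follows essentially the same route as the paper: use Example~\ref{ex:stab-cyclique} to see that $\o$ is \'etale over $q_9$ and $q_{12}$ (so the local germs at $p_e,p_e'$ agree with those at $q_e$), then read off the singularity types of the coordinate vertices of $\PM(5,3,4)$ and $\PM(1,3,2)$. The only difference is that you spell out the cyclic-quotient normalization $\tfrac1n(a,b)\leadsto\tfrac1n(1,a^{-1}b)$ and the criterion $a+b\equiv 0 \pmod n$, which the paper leaves implicit; your four computations are all correct.
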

\end{quotation}

\medskip

Therefore, it remains to prove that the points of $\XC/W'$ lying above $q_5$ are ADE singularities.
For this, note that
$$\pi^{-1}(q_5)=\{x \in \PM(V)~|~f_2(x)=f_6(x)=f_8(x)=f_9(x)=f_{12}(x)=0\}.$$
But, by~\eqref{eq:max-dim} and Theorem~\ref{theo:springer}(d) applied to the case where $e=5$,
we get that $V(5)$ is a line
in $V$, so may be viewed as a point of $\PM(V)$ and $\pi_\fb^{-1}(q_5)$ is the $W$-orbit
of $V(5)$. Now, $\d^*(5)=2 > \d(5)=1$, so it follows from Theorem~\ref{theo:springer}(f) that
$W_{V(5)}^\ptw \neq 1$. By Steinberg's Theorem~(see for instance~\cite[Theo.~4.7]{broue}),
this shows that $W_{V(5)}^\ptw$
contains a reflection, and so the stabilizer $G$ of $V(5)$ in $W$ contains
a reflection. In particular, $G$ is not contained in $W'$. This proves that
the morphism $\XC/W' \to \XC/W$ is ramified above $q_5$: we denote by $p_5$
the unique point of $\XC/W'$ lying above $q_5$.

Now, Example~\ref{ex:w5} shows that the stabilizer
of $V(5)$ in $W'$ is $\langle w_5 \rangle$. So, in order to determine the type of
singularity of $\XC/W'$ at $p_5$, we only need to determine the two eigenvalues of
$w_5$ for its action on $\Trm_{V(5)}(\XC)$. This is easily done thanks to
Corollary~\ref{coro:action tangent}: the two eigenvalues are $\z_5$ and $\z_5^{-1}$.
We have thus proved the following result:

\medskip

\begin{quotation}
\begin{lem}\label{lem:p5}
The point $p_5$ is an $A_4$ singularity of $\XC/W'$.
\end{lem}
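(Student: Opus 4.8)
The plan is to realize the singularity of $\XC/W'$ at $p_5$ as a cyclic quotient singularity and then read off its type from the action of the stabilizer on the tangent plane. By the discussion preceding the statement, $p_5$ is the unique point of $\XC/W'$ lying above $q_5$, it is the image of the line $V(5)\in\PM(V)$, and by Example~\ref{ex:w5} the stabilizer of $V(5)$ in $W'$ is the cyclic group $\langle w_5\rangle$ of order $5$. Since $\XC$ is smooth at $V(5)$ by Proposition~\ref{prop:XY}(a) and is $W'$-stable, the analytic (equivalently, \'etale-local) structure of $\XC/W'$ at $p_5$ is that of the quotient $\Trm_{V(5)}(\XC)/\langle w_5\rangle$ of the $2$-dimensional tangent plane by the linearized action of $w_5$, using linearizability of a finite group action at a smooth fixed point. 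Thus everything reduces to computing the two eigenvalues of $w_5$ on $\Trm_{V(5)}(\XC)$.

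First I would apply Corollary~\ref{coro:action tangent} with $g=w_5$, with $v$ a generator of the line $V(5)$, and with $(f_1,f_2,f_3)=(f_2,f_6,f_8)$ of degrees $(2,6,8)$. Its three hypotheses hold: $\XC=\ZC(f_2,f_6,f_8)$ is a global complete intersection in $\PM(V)$ by Proposition~\ref{prop:XY}(a); one has $w_5(v)=\z_5 v$ by the very definition $V(5)=V(w_5,\z_5)$; and $[v]$ is a smooth point of $\XC$. By Example~\ref{ex:w5} the multiset of eigenvalues of $w_5$ is $\multiset{1,1,\z_5,\z_5^2,\z_5^3,\z_5^4}$; taking the distinguished eigenvalue on $v$ to be $\xi_6=\z_5$, the remaining five are $\multiset{1,1,\z_5^2,\z_5^3,\z_5^4}$. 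Since $\z_5^{-2}=\z_5^3$, $\z_5^{-6}=\z_5^{-1}$ and $\z_5^{-8}=\z_5^2$, the corollary yields the list of tangent eigenvalues as
$$\multiset{\z_5^{-1},\z_5^{-1},\z_5,\z_5^2,\z_5^3}\setminus\multiset{\z_5^3,\z_5^{-1},\z_5^2}=\multiset{\z_5,\z_5^{-1}}.$$

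Finally I would identify the singularity type. The generator $w_5$ acts on $\Trm_{V(5)}(\XC)\simeq\CM^2$ as $\diag(\z_5,\z_5^{-1})$, an element of $\Sb\Lb_\CM(\Trm_{V(5)}(\XC))$ of order $5$; the resulting cyclic quotient $\CM^2/\mub_5$ of type $\tfrac{1}{5}(1,4)$ is precisely the Kleinian (Du Val) singularity $A_4$. Hence $p_5$ is an $A_4$ singularity of $\XC/W'$. The only genuinely delicate point is the reduction to the tangent-space quotient, namely the linearizability of the $\langle w_5\rangle$-action at the smooth fixed point $V(5)$; once that is granted, the eigenvalue bookkeeping supplied by Corollary~\ref{coro:action tangent} and the recognition of $\tfrac{1}{5}(1,4)$ as $A_4$ are purely mechanical.
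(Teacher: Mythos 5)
Your proposal is correct and follows essentially the same route as the paper: identify the stabilizer of $V(5)$ in $W'$ as $\langle w_5\rangle$ via Example~\ref{ex:w5}, then use Corollary~\ref{coro:action tangent} with the degrees $(2,6,8)$ and the eigenvalue multiset $\multiset{1,1,\z_5,\z_5^2,\z_5^3,\z_5^4}$ to find that $w_5$ acts on $\Trm_{V(5)}(\XC)$ with eigenvalues $\z_5$ and $\z_5^{-1}$, whence an $A_4$ quotient singularity. The only difference is that you spell out the linearization at the smooth fixed point and the identification of $\tfrac{1}{5}(1,4)$ with $A_4$, which the paper leaves implicit.
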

\end{quotation}

\medskip

This completes the proof of Lemma~\ref{lem:ADE}.
\end{proof}

\medskip

\subsection{Euler characteristic}
Since $\VC$ is a complete intersection which is smooth or has only ADE singularities,
its cohomology (with coefficients in $\CM$) is concentrated in even degree~\cite[Theo.~2.1,~Lem.~3.2~and~Example~3.3]{dimca}. Now, $\Hrm^k(\VC/W',\CM) \simeq \Hrm^k(\VC,\CM)^{W'}$,
so the cohomology of $\VC/W'$ is concentrated in even degree. This implies the next lemma, which
corresponds to the Fact~(C) stated in the proof of Theorem~\ref{theo:k3}, and completes
the proof of Theorem~\ref{theo:k3}:

\bigskip

\begin{lem}\label{lem:euler}
The Euler characteristic of $\VC/W'$ is positive.
\end{lem}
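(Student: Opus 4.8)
The plan is to reduce the positivity of the Euler characteristic to a parity statement: if the cohomology of $\VC/W'$ vanishes in all odd degrees, then $\chi(\VC/W')$ is a sum of nonnegative terms all carrying the sign $+1$, and is therefore strictly positive. So the whole argument rests on establishing that $\Hrm^\bullet(\VC/W',\CM)$ is concentrated in even degrees, which I would do in two steps. First, $\VC$ is, by Proposition~\ref{prop:XY}, a complete intersection surface in $\PM(V)$ that is either smooth or has only ADE singularities; these are exactly the hypotheses of Dimca's results~\cite[Theo.~2.1,~Lem.~3.2~and~Example~3.3]{dimca}, which give that $\Hrm^\bullet(\VC,\CM)$ is concentrated in even degrees, i.e. the odd Betti numbers of $\VC$ vanish. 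Second, since $W'$ is finite and the coefficient field $\CM$ has characteristic zero, averaging over $W'$ yields a projector onto invariants and hence an isomorphism $\Hrm^k(\VC/W',\CM) \simeq \Hrm^k(\VC,\CM)^{W'}$ for every $k$; being a subspace of something that vanishes for $k$ odd, $\Hrm^k(\VC/W',\CM)$ vanishes for $k$ odd as well.

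With this in hand, the conclusion is immediate. Setting $b_k=\dim_\CM \Hrm^k(\VC/W',\CM)$, the vanishing of the odd $b_k$ gives
$$\chi(\VC/W')=\sum_k (-1)^k b_k=\sum_{k~\text{even}} b_k.$$
Each summand is nonnegative, and $b_0=1$ because $\VC/W'$ is irreducible (hence a nonempty connected projective variety). Thus $\chi(\VC/W')\ge 1>0$, as required.

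I do not expect a genuine obstacle here: the substance of the argument is outsourced to Dimca's even-concentration result, cited as a black box. The one point deserving a word of justification is the transfer isomorphism $\Hrm^k(\VC/W',\CM)\simeq \Hrm^k(\VC,\CM)^{W'}$, which I would invoke in the standard form valid for the quotient of a variety by a finite group over a field of characteristic zero (the averaging operator $\frac{1}{|W'|}\sum_{w\in W'} w^\ast$ being available); granting this, both the parity of the cohomology and the positivity of $\chi$ follow formally.
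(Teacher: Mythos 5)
Your proof is correct and follows essentially the same route as the paper: invoke Dimca's even-concentration result for complete intersections with at most ADE singularities, transfer it to the quotient via $\Hrm^k(\VC/W',\CM)\simeq\Hrm^k(\VC,\CM)^{W'}$, and conclude that the Euler characteristic is a sum of nonnegative even Betti numbers with $b_0=1$. No discrepancies to report.
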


\bigskip

\boitegrise{\vphantom{$\frac{A}{a}$}{\bf Notation.}
{\it The minimal smooth resolution of $\XC/W'$ will be denoted by $\r : \XCt \to \XC/W'$.
Theorem~\ref{theo:k3} says $\XCt$ is a smooth projective K3 surface.}}{0.75\textwidth}

\bigskip

\section{Some numerical data for the surface ${\boldsymbol{\XC/W'}}$}\label{sec:xw}

\medskip

We complete here the qualitative result given by Theorem~\ref{theo:k3} with
some concrete results concerning the surface $\XC/W'$
(type of singularities, equation, coordinates of singular points, cohomology,...).
These informations will be used in the next section to obtained further properties
of the K3 surface $\XCt$ (Picard lattice, elliptic fibration,...).

\bigskip

\subsection{Singularities}
In the course of the proof of Theorem~\ref{theo:k3}, we have obtained some quantitative
results (see Lemmas~\ref{lem:p9 p12} and~\ref{lem:p5}). We complete
them by determining all the singularities
of $\XC/W'$:

\bigskip

\begin{prop}\label{prop:singularities X}
The surface $\XC/W'$ admits $A_4 + 2\, A_3 + 3\, A_2 + 2\, A_1$ singularities.
\end{prop}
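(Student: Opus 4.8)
The plan is to use that $\XC$ is smooth (Proposition~\ref{prop:XY}), so that the singular points of $\XC/W'$ are exactly the images of the $W'$-orbits of points $[v]\in\XC$ whose projective stabiliser $(W')_{[v]}$ is non-trivial. Theorem~\ref{theo:k3} already tells us these are ADE singularities, and since the local action on the surface is symplectic, any non-trivial $g$ fixing such a $[v]$ acts on the tangent plane $\Trm_{[v]}(\XC)$ with two eigenvalues of product $1$; in particular $g$ cannot act as a reflection there, so the point is genuinely singular. Whenever the stabiliser is cyclic, generated by an element $g$ with tangent eigenvalues $\multiset{\z_n,\z_n^{-1}}$, Corollary~\ref{coro:action tangent} reads off the singularity as being of type $A_{n-1}$. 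Thus the whole statement reduces to enumerating these orbits and, for each, computing the order $n$.

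I would organise the enumeration along the double cover $\o:\XC/W'\to\XC/W\simeq\PM(5,3,4)$. Over the three singular points $q_5$, $q_9$, $q_{12}$ of $\PM(5,3,4)$ the analysis has already been carried out inside the proof of Theorem~\ref{theo:k3}: Lemma~\ref{lem:p5} gives an $A_4$ point at $p_5$, while Lemma~\ref{lem:p9 p12} gives $A_2$ points at $p_9,p_9'$ and $A_3$ points at $p_{12},p_{12}'$. This accounts for $A_4+2\,A_3+2\,A_2$, that is five of the eight expected singular points, and shows that the remaining singularities must lie over the \emph{smooth} locus of $\XC/W$.

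It then remains to find the singular points of $\XC/W'$ sitting over smooth points of $\XC/W$: these are the images of $[v]\in\XC$ that are smooth in $\XC/W$ but have non-trivial stabiliser in $W'$. Concretely I would run through the conjugacy classes of non-trivial elements $g\in W'$ — in practice only the involutions (tangent eigenvalues $\multiset{-1,-1}$, giving $A_1$) and the elements of order $3$ (tangent eigenvalues $\multiset{\z_3,\z_3^{-1}}$, giving $A_2$) will contribute new orbits — restrict each eigenspace of $g$ to the locus lying on $\XC$ by imposing $f_2=f_6=f_8=0$ (using Lemma~\ref{lem:trivial} to discard the eigenvalues that force vanishing automatically), compute the tangent eigenvalues through Corollary~\ref{coro:action tangent}, and finally collect the resulting fixed points into $W'$-orbits, discarding those already found above $q_5$, $q_9$, $q_{12}$. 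This {\sc magma} computation should produce exactly one further orbit of type $A_2$ together with two orbits of type $A_1$, and should simultaneously confirm that each stabiliser is cyclic (so that no $D$- or $E$-type point can occur) and that no singularity of order larger than $5$ arises. Adding these to the previous list yields $A_4+2\,A_3+3\,A_2+2\,A_1$.

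The main obstacle is the completeness and the bookkeeping of this last enumeration: one has to be sure that every point of $\XC$ with non-trivial $W'$-stabiliser has been detected, that points lying in a common orbit are merged so as to be counted once, and that the symplectic action at each is really cyclic of the announced order. A convenient independent check is the Euler characteristic: resolving an $A_n$ point raises $\chi$ by $n$, so $\chi(\XCt)=\chi(\XC/W')+\sum_j n_j$; since $\XCt$ is a K3 surface one has $\chi(\XCt)=24$, and computing $\chi(\XC/W')=\tfrac{1}{|W'|}\sum_{g\in W'}\chi(\XC^g)$ by a fixed-point count provides a numerical corroboration of the final list of eight singular points.
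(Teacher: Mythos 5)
Your overall strategy (run through the conjugacy classes of $W'$, compute the fixed loci on the smooth surface $\XC$, identify stabilisers and their tangent actions, and use the Euler characteristic of $\XCt$ as a sanity check) is exactly the paper's. But there is a genuine gap at the foundation of your enumeration: the claim that the singular points of $\XC/W'$ are \emph{exactly} the images of points with non-trivial stabiliser, justified by the assertion that ``since the local action on the surface is symplectic, any non-trivial $g$ fixing $[v]$ acts on $\Trm_{[v]}(\XC)$ with two eigenvalues of product $1$'' and hence cannot be a reflection. This is false. The symplectic form lives on the smooth locus of the quotient $\XC/W'$, not $W'$-equivariantly on $\XC$ itself; concretely, by Corollary~\ref{coro:action tangent}, an element $w\in W'$ with $w(v)=\xi v$ acts on $\Trm_{[v]}(\XC)$ with determinant $\det(w)\,\xi^{10}=\xi^{10}$, which is $\neq 1$ whenever $\xi$ is a non-trivial cube root of unity. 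The paper's proof shows this actually happens: the $12$ points of $\XC^{v_3}$ and $24$ of the $96$ points of $\XC^{u_3}$ have stabiliser isomorphic to $\mub_3\times\mub_3$ acting on the tangent plane as the reflection group $\langle \diag(\z_3,1),\diag(1,\z_3)\rangle$, so their images in $\XC/W'$ are \emph{smooth} despite the non-trivial stabiliser. Your plan, taken literally, would report these orbits as extra singular points (e.g.\ spurious $A_2$'s, since the non-reflection elements of these stabilisers do act with eigenvalues $\multiset{\z_3,\z_3^{-1}}$), and your parenthetical prediction that ``only the involutions and the order-$3$ elements with tangent eigenvalues $\multiset{\z_3,\z_3^{-1}}$ will contribute new orbits, each with cyclic stabiliser'' is precisely the non-trivial content that has to be \emph{proved}, not assumed.

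So the missing ingredient is the case analysis that the paper carries out in Lemma~\ref{lem:order 3}: for each fixed point one must determine the \emph{full} stabiliser $W'_x$ (not just the cyclic group generated by the element one started from), and when it is non-cyclic decide whether its image in $\GL(\Trm_x(\XC))$ is generated by reflections (smooth image, by Chevalley--Shephard--Todd) or not. Once that is added — distinguishing, inside $\XC^{u_3}$, the $72$-point orbit with stabiliser $\langle u_3\rangle$ (giving the new $A_2$ point $p_2$) from the $24$ points with stabiliser $\mub_3\times\mub_3$ (smooth images), and checking that $\XC^{v_3}$ contributes nothing — your count closes up to the paper's and the Euler-characteristic check $6+2\cdot 1+3\cdot 2+2\cdot 3+4=24$ confirms it.
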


\bigskip

\begin{proof}
For proving this proposition, we must investigate fixed points under various elements of $W'$,
up to conjugacy. Note also that we have already found $A_4 + 2\, A_3 + 2\, A_2$ singularities
in $\XC/W'$
(see Lemma~\ref{lem:p9 p12} and~\ref{lem:p5}), given by the points $p_5$, $p_9$, $p_9'$,
$p_{12}$ and $p_{12}'$ lying respectively above the points $q_5=[1 : 0 : 0]$, $q_9=[0 : 1 : 0]$
and $q_{12}=[0 : 0 : 1]$ of $\XC/W \simeq \PM(5,3,4)$.

Note also that, for any $w \in W$, the fixed point scheme $\XC^w$ is also smooth and in particular
is reduced (so, in the {\sc Magma} computations necessary in the proof of the proposition,
we do not need to compute its reduced subscheme for determining the exact number of points).
So let us now investigate the fixed points subscheme, by letting the order of $w$ increasing.

\medskip

\begin{quotation}
\begin{lem}[Elements of order 2]\label{lem:order 2}
There are two conjugacy classes of elements of order $2$: the one of the element $w_2$ defined
after Theorem~\ref{theo:springer} with eigenvalues $\multiset{1,1,-1,-1,-1,-1}$
and the one of an element $v_2$ admitting $\multiset{1,1,1,1,-1,-1}$ as a list
of eigenvalues. Moreover:
\begin{itemize}
\itemth{a} The image of $\XC^{w_2}$ in $\XC/W'$ is contained in $\{p_{12},p_{12}'\}$.

\itemth{b} The image of $\XC^{v_2}$ in $\XC/W'$ consists in two elements $p_1$ and $p_1'$
which are both $A_1$ singularities.
\end{itemize}
\end{lem}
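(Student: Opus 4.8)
The plan is to separate the classification of the involutions from the geometric analysis of their fixed loci, the latter resting on the eigenspace decomposition of $\XC^w$ together with Lemma~\ref{lem:trivial} and Corollary~\ref{coro:action tangent}. First I would classify the order-$2$ elements of $W'$, equivalently the involutions of $W$ of determinant $+1$. Any such $w$ has $\det(w)=1$, so $\dim_\CM V(w,-1)$ is even; it is nonzero since $w\neq 1$, and it cannot equal $6$ because $-\Id_V\notin W$ (otherwise every fundamental invariant would have even degree, contradicting the presence of $f_5$ and $f_9$). Hence $\dim_\CM V(w,-1)\in\{2,4\}$. That each value is realized by a single conjugacy class, which moreover does not split on passage from $W$ to $W'$ (the relevant centralizers contain elements of determinant $-1$), follows from the conjugacy class data of $W$ and is readily confirmed by a {\sc Magma} computation. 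For $\dim_\CM V(w,-1)=4$ the representative is $w_2$: indeed Theorem~\ref{theo:springer}(f) applied to $e=2$, where $\D(2)=\{2,6,8,12\}$, gives the eigenvalue multiset $\multiset{\z_2^{1-d}}_{d\in\Deg(W)}=\multiset{1,1,-1,-1,-1,-1}$. For $\dim_\CM V(w,-1)=2$ a representative $v_2$, with eigenvalues $\multiset{1,1,1,1,-1,-1}$, is furnished by a product of two commuting reflections.

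For the geometric statements I would decompose, for any involution $w$, the fixed locus as $\XC^w=\bigl(\XC\cap\PM(V(w,1))\bigr)\sqcup\bigl(\XC\cap\PM(V(w,-1))\bigr)$, the two pieces corresponding to the eigenvectors of $w$ with eigenvalue $+1$ and $-1$. The key remark is that on the $(-1)$-eigenspace $w$ acts by the scalar $-1$, so Lemma~\ref{lem:trivial} forces the odd-degree invariants $f_5$ and $f_9$ to vanish on $\XC\cap\PM(V(w,-1))$; since $f_2=f_6=f_8=0$ everywhere on $\XC$, every point of this piece has image $[0:0:1]=q_{12}$ in $\XC/W\simeq\PM(5,9,12)$ and hence maps into $\{p_{12},p_{12}'\}$ in $\XC/W'$.

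For $w_2$ the $(-1)$-eigenspace is the $4$-dimensional Springer space $V(2)=V(w_2,-1)$, so the remark above already sends the bulk of $\XC^{w_2}$ into $\{p_{12},p_{12}'\}$. It remains to control the $2$-dimensional piece $\XC\cap\PM(V(w_2,1))$: here $f_2$ restricts to the nondegenerate quadratic form on the plane $V(w_2,1)$, so $\{f_2=0\}$ meets $\PM(V(w_2,1))\cong\PM^1$ in two points, and a direct computation in coordinates shows that $f_6$ and $f_8$ do not both vanish at either of them, i.e. this piece is empty. This yields the containment in~(a).

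For $v_2$ the two eigenspaces exchange dimensions. The $2$-dimensional $(-1)$-eigenspace is again governed by the remark above, and I would verify by the same coordinate computation that $\XC\cap\PM(V(v_2,-1))$ is in fact empty, so that it contributes nothing to the image. The new points arise on the $4$-dimensional $(+1)$-eigenspace: on $\PM(V(v_2,1))\cong\PM^3$ the three equations $f_2=f_6=f_8=0$ cut out a finite scheme, which I would compute explicitly and show maps onto exactly two points $p_1,p_1'$ of $\XC/W'$, distinct from $p_5,p_9,p_9',p_{12},p_{12}'$. Checking that the stabilizer in $W$ of a representative $[x]$ equals $\langle v_2\rangle$ shows both that $\o$ is unramified there (so $p_1'=\s(p_1)\neq p_1$) and that the local model is $\CM^2/\langle v_2\rangle$; finally, since $v_2 x=x$, Corollary~\ref{coro:action tangent} with $\xi_n=1$ gives the tangent eigenvalues $\multiset{1,1,1,-1,-1}\setminus\multiset{1,1,1}=\multiset{-1,-1}$, so $v_2$ acts on $\Trm_{[x]}(\XC)$ by $-\Id$ and the singularity is of type $A_1$. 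The main obstacle is the completeness of these finite computations: one must be sure the zero-dimensional intersections on the $4$-dimensional eigenspaces are found exactly and, above all, that the stabilizer of $[x]$ is no larger than $\langle v_2\rangle$, since any undercount would alter the singularity type. This is precisely where the {\sc Magma} calculations are indispensable.
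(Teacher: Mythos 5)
Your classification of the involutions and your treatment of~(a) are sound, and in places more conceptual than the paper's: the paper simply verifies by machine that $\XC^{w_2}$ is finite and contained in $\ZC(f_5,f_9)$, whereas you explain this via the decomposition $\XC^w=\bigl(\XC\cap\PM(V(w,1))\bigr)\sqcup\bigl(\XC\cap\PM(V(w,-1))\bigr)$ and Lemma~\ref{lem:trivial} applied to the odd-degree invariants $f_5,f_9$ on the $(-1)$-eigenspace. Likewise your tangent-space computation via Corollary~\ref{coro:action tangent}, giving eigenvalues $\multiset{-1,-1}$, is a valid (and slightly cleaner) substitute for the paper's observation that $v_2$ must act as $-\Id$ on $\Trm_x(\XC)$ because $x$ is isolated in $\XC^{v_2}$.

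The gap is in~(b), in the sentence beginning ``Checking that the stabilizer in $W$ of a representative $[x]$ equals $\langle v_2\rangle$''. That check fails: the stabilizer of $[x]$ in $W$ has order $4$, and only the stabilizer in $W'$ equals $\langle v_2\rangle$. Consequently $\o:\XC/W'\to\XC/W$ is \emph{ramified} over the images of $\XC^{v_2}$; each of $p_1$, $p_1'$ is fixed by $\s$ (they lie on the branch locus $j=0$, as the coordinates in Lemma~\ref{lem:singular points} confirm), and $p_1\neq p_1'$ lie over two \emph{distinct} points of $\XC/W$, Galois conjugate over $\QM(\sqrt{19})$ --- they are not an $\s$-orbit. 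So the mechanism by which you produce ``exactly two points'' (one point of $\XC/W$ with an unramified fibre of $\o$ above it) is wrong, and the stabilizer computation you propose as the crux would return a negative answer, breaking the argument as structured. The correct count, as in the paper, is an orbit count inside the fixed locus: $\XC^{v_2}$ has $96$ points, no element of $W'\setminus\{1,v_2\}$ fixes any of them, hence any $w\in W'$ carrying a point of $\XC^{v_2}$ to another point of $\XC^{v_2}$ must centralize $v_2$; since $|C_{W'}(v_2)|=96$ and $C_{W'}(v_2)/\langle v_2\rangle$ acts freely on these $96$ points, there are exactly two $W'$-orbits. Your $A_1$ conclusion survives this correction, because it only uses $W'_{[x]}=\langle v_2\rangle$ together with the tangent eigenvalues, but the two-point count and the asserted relation between $p_1$, $p_1'$ and $\s$ need to be redone along these lines.
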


\bigskip

\begin{proof}[Proof of Lemma~\ref{lem:order 2}]
The first statement is proved in Computation~\ref{comp:order 2}.

\medskip

(a) By Computation~\ref{comp:w2}, $\XC^{w_2}$ has dimension $0$ and is contained in $\ZC(f_5,f_9)$,
so its image in $\XC/W$ is $q_{12}$. This proves~(a).

\medskip

(b) By the first four commands in Computation~\ref{comp:v2}, we know that
$\XC^{v_2}$ has dimension $0$ and consists of $96$ points. The fifth command shows that,
if $g \in W' \setminus \{1,v_2\}$, then
$$\XC^g \cap \XC^{v_2} = \vide.\leqno{(\clubsuit)}$$
If the reader wants to check this computation,
he or she must be aware that it takes about 10 minutes on a standard computer.
We now
need to understand how many $W'$-orbits meet $\XC^{v_2}$. For this, let
$w \in W'$ and $x \in \XC^{v_2}$ be such that $w(x) \in \XC^{v_2}$. Then
$w^{-1}v_2w(x)=x$ and so it follows from $(\clubsuit)$ that $w^{-1}v_2w=v_2$ or,
in other words, that $w \in C_{W'}(v_2)$.

But $|C_{W'}(v_2)|=96$ thanks to the last two commands of Computation~\ref{comp:v2}.
Since $C_{W'}(v_2)/\langle v_2 \rangle$ acts freely on $\XC^{v_2}$ by~$(\clubsuit)$, there are
two $C_{W'}(v_2)$-orbits in $\XC^{v_2}$, and we denote by $p_1$ and $p_1'$
their image in $\XC/W'$. Still by~$(\clubsuit)$, they are both singularities of type $A_1$
(indeed, if $x \in \XC^{v_2}$, the action of $v_2$ on $\Trm_x(\XC)$ has no eigenvalue
equal to $1$, because $x$ is isolated; thus, $v_2$ acts as $-\Id_{\Trm_x(\XC)}$).
\end{proof}
\end{quotation}

\bigskip

\begin{quotation}
\begin{lem}[Elements of order 3]\label{lem:order 3}
There are four conjugacy classes of elements of order $3$: the one of the element $w_3$ defined
after Theorem~\ref{theo:springer} with eigenvalues $\multiset{\z_3,\z_3,\z_3,\z_3^2,\z_3^2,\z_3^2}$,
the one of its inverse $w_3^{-1}$, the one of an element $v_3$ with eigenvalues $\multiset{1,1,\z_3,\z_3,\z_3^2,\z_3^2}$
and the one of an element $u_3$ with eigenvalues $\multiset{1,1,1,1,\z_3,\z_3^2}$. Moreover:
\begin{itemize}
\itemth{a} $\XC^{w_3}=\XC^{w_3^{-1}}$ has pure dimension $1$.

\itemth{b} $\XC^{v_3}$ contains $12$ points. Moreover, if $x \in \XC^{v_3}$, then $W_x'$ has order
$9$ and acts as a reflection group on $\Trm_x(\XC)$. Hence, the image of $x$ in $\XC/W'$ is smooth.

\itemth{c} $\XC^{u_3}$ contains $96$ points. Moreover, if $x \in \XC^{u_3}$, then one of the following holds:
\begin{itemize}
\itemth{c1} $W_x'=\langle u_3 \rangle$ and $u_3$ acts on $\Trm_x(\XC)$ with eigenvalues $\z_3$ and $\z_3^2$.
Hence the image of $x$ in $\XC/W'$ is an $A_2$ singularity.

\itemth{c2} $W_x'$ has order $9$ and acts as a reflection group on $\Trm_x(\XC)$.
Hence, the image of $x$ in $\XC/W'$ is smooth.
\end{itemize}
Also, all the points $x$ satisfying~{\rm (c1)} belong to the same $W'$-orbit, and their image
$p_2$ in $\XC/W'$ is not equal to $p_9$ or $p_9'$.
\end{itemize}
\end{lem}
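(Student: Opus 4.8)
The plan is to handle the four conjugacy classes uniformly, combining Springer theory (Theorem~\ref{theo:springer}) with the tangent-space eigenvalue formula of Corollary~\ref{coro:action tangent}, and to delegate the point counts, stabilizer orders and the reflection-group checks to {\sc Magma}. The existence of exactly the four stated classes, together with their eigenvalue multisets, is a direct computation inside $W'$. For part~(a) I would take $e=3$ in Springer theory: as $\D(3)=\{6,9,12\}$ we have $\d(3)=3$, so $V(3)=V(w_3,\z_3)$ is three-dimensional. Lemma~\ref{lem:trivial} forces $f_2$ and $f_8$ (degrees prime to $3$) to vanish on $V(3)$, while $f_6^{[3]}$ is a nonzero fundamental invariant by Theorem~\ref{theo:springer}(c); hence $\XC\cap\PM(V(3))$ is the curve $\{f_6^{[3]}=0\}$ in $\PM(V(3))\simeq\PM^2$. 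Since $\PM(V)^{w_3}=\PM(V(w_3,\z_3))\sqcup\PM(V(w_3,\z_3^2))$ and $w_3,w_3^{-1}$ share all eigenvectors, $\XC^{w_3}=\XC^{w_3^{-1}}$ is a union of two such curves, of pure dimension $1$.

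For part~(b), a {\sc Magma} computation gives $|\XC^{v_3}|=12$. These are smooth points of the smooth surface $\XC$, so the line-stabilizer $W_x'$ acts on the plane $\Trm_x(\XC)$, and I would verify by computer that $|W_x'|=9$ and that $W_x'$ acts as a (reducible) reflection group, a copy of $\mub_3\times\mub_3$ generated by two commuting reflections (note that $v_3$ itself acts as the scalar $\z_3^2$ by Corollary~\ref{coro:action tangent}, so the reflections must come from the larger group $W_x'$). By the Chevalley--Shephard--Todd theorem the quotient of a smooth surface by such a stabilizer is smooth at the image of $x$, which is the assertion.

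Part~(c) is the delicate one. A {\sc Magma} computation gives $|\XC^{u_3}|=96$ and $|W_x'|\in\{3,9\}$ at each point. I would first note that all $96$ points lie in $\PM(V(u_3,1))$: on the isolated eigenlines $V(u_3,\z_3^{\pm1})$ the multiset containment in Corollary~\ref{coro:action tangent} would fail, so — $\XC$ being everywhere smooth — these lines cannot lie on $\XC$. Consequently $\th_v=1$ at every fixed point, and Corollary~\ref{coro:action tangent} applied to $\multiset{1,1,1,1,\z_3,\z_3^2}$ — removing the three entries $\th_v^{-d}=1$ for $d\in\{2,6,8\}$ — yields tangent eigenvalues $\multiset{\z_3,\z_3^2}$ for $u_3$. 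When $|W_x'|=9$ the reflection-group argument of~(b) gives a smooth image, which is case~(c2); when $|W_x'|=3$ we have $W_x'=\la u_3\ra$ acting as $\frac{1}{3}(1,2)$ on $\Trm_x(\XC)$, i.e. an $A_2$ singularity, which is case~(c1).

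It remains to show that the type-(c1) points form a single $W'$-orbit with image $p_2\neq p_9,p_9'$. Since conjugate points have conjugate stabilizers, each $W'$-orbit is homogeneous in stabilizer order, so a type-(c1) orbit meets $\XC^{u_3}$ only in type-(c1) points; moreover $wx\in\XC^{u_3}$ (for $w\in W'$) holds exactly when $w^{-1}u_3w\in\la u_3\ra$, i.e. $w\in N_{W'}(\la u_3\ra)$. Hence each such orbit contributes exactly $|N_{W'}(\la u_3\ra)|/3$ type-(c1) points, and comparing this number with the total number of type-(c1) points in {\sc Magma} shows there is a single orbit. Finally $p_2\neq p_9,p_9'$ because the preimage of $p_2$ has stabilizer $\la u_3\ra$ of order $3$, whereas the preimage $V(9)$ of $p_9$ has stabilizer $\la w_9\ra$ of order $9$ by Example~\ref{ex:stab-cyclique}, and points in one $W'$-orbit would have isomorphic stabilizers. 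The hard part will be this last step in~(c): cleanly separating the $96$ points into the two types and certifying the single-orbit statement rests on the normalizer computation and on the {\sc Magma} verification that the fixed-point scheme is reduced of the expected length.
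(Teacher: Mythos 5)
Your proposal is correct and its backbone coincides with the paper's: {\sc Magma} for the conjugacy classes, point counts and stabilizer orders; Corollary~\ref{coro:action tangent} to pin down $\th_v=1$ and the tangent eigenvalues $\multiset{\z_3,\z_3^2}$; and the fact that the only subgroup of $\GL_2(\CM)$ isomorphic to $\mub_3\times\mub_3$ is the diagonal reflection group, hence smooth quotient. Two sub-steps take a genuinely different route. For~(a) you argue via Springer theory that $\XC^{w_3}$ is the union of the two plane sextics $\{f_6^{[3]}=0\}$ in $\PM(V(3))$ and $\PM(V^-(3))$; the paper just runs \texttt{Dimension(FixedPoints(w3) meet X)}, and only develops your argument later (Lemma~\ref{lem:courbe 10}), so your version is more conceptual and costs nothing. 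For $p_2\neq p_9,p_9'$ you compare stabilizer orders ($3$ for a type-(c1) point versus $9$ for $V(9)$ by Example~\ref{ex:stab-cyclique}), whereas the paper checks computationally that $f_5$ and $f_{12}$ do not vanish simultaneously on the relevant component; your argument is cleaner and avoids a computation. Your single-orbit count (each type-(c1) orbit meets $\XC^{u_3}$ in exactly $|N_{W'}(\langle u_3\rangle)|/3=72$ points) is equivalent to the paper's observation that $N_{W'}(\langle u_3\rangle)/\langle u_3\rangle$ acts freely, hence transitively, on the $72$-point set.

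One step is stated too quickly: in~(c2) you write that ``when $|W_x'|=9$ the reflection-group argument of~(b) gives a smooth image,'' but the order alone does not suffice. A cyclic group of order $9$ acting on $\Trm_x(\XC)$ as $\diag(\z_9,\z_9^{-1})$ would produce an $A_8$ singularity, so you must also certify that these order-$9$ stabilizers are elementary abelian before invoking the classification of $\mub_3\times\mub_3$ inside $\GL_2(\CM)$. The paper does exactly this (\texttt{IsElementaryAbelian} in Computations~\ref{comp:v3} and~\ref{comp:B}); since you are already delegating the stabilizer computations to {\sc Magma}, this is a one-line addition, but it should be stated explicitly rather than deduced from $|W_x'|=9$.
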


\bigskip

\begin{proof}[Proof of Lemma~\ref{lem:order 3}]
The first statement and~(a) are both proved in Computation~\ref{comp:order 3}.

\medskip

(b) The first three commands of Computation~\ref{comp:v3} show that $\XC^{v_3}$ has dimension $0$.
The next two commands of Computation~\ref{comp:v3} show
that the set $G(v_3)$ of elements $w \in W'$ such that $\XC^{v_3} \cap \XC^w \neq \vide$
has cardinality $9$. The next three commands of Computation~\ref{comp:v3} show that $G(v_3)$ is a group
isomorphic to $\mub_3 \times \mub_3$. The last command of Computation~\ref{comp:v3} show that
$\XC^{v_3} \subset \XC^w$ for all $w \in G(v_3)$.

This implies that, if $x \in \XC^{v_3}$, then its stabilizer in $W'$ is isomorphic to
$\mub_3 \times \mub_3$. But, up to conjugacy, the only subgroup of $\GL_\CM(\Trm_x(\XC)) \simeq \GL_2(\CM)$
isomorphic to $\mub_3 \times \mub_3$ is $\langle \diag(\z_3,1),\diag(1,\z_3) \rangle$,
which is a reflection group.

\medskip

(c) The first four commands of Computation~\ref{comp:u3} show that $\XC^{u_3}$ has dimension $0$
and contains $96$ points. The next two commands of Computation~\ref{comp:u3} show that
the normalizer $N(u_3)=N_{W'}(\langle u_3\rangle)$ has order $216$. Note that $N(u_3)$ acts
on $\XC^{u_3}$. Also, the scheme $\XC^{u_3}$ is naturally defined over $\QM$
(we denote by $\XC_\QM^{u_3}$ its $\QM$-form) and computations
in {\sc Magma} are performed over $\QM$. Therefore, the last three commands of
Computation~\ref{comp:u3} show that the scheme $\XC_\QM^{u_3}$ has three
irreducible components $\IC_1$, $\IC_2$ and $\IC_3$, admitting respectively $72$, $12$ and $12$ points over $\CM$:
we set $A=\IC_1(\CM)$ and $B=\IC_2(\CM) \cup \IC_3(\CM)$. In particular,
$|A|=72$ and $|B|=24$. Note that the group $N(u_3)/\langle u_3 \rangle$ acts on $A$ and $B$.

\medskip

(c1) The first three lines of Computation~\ref{comp:A} show that, if $w \in W' \setminus \langle u_3 \rangle$,
then $A \cap \XC^w = \vide$. This shows that $N(u_3)/\langle u_3 \rangle$
acts freely on $A$. Since $|A|=72=|N(u_3)/\langle u_3 \rangle|$, this also shows that
$N(u_3)$ acts transitively: we denote by $p_3'$ the unique point in the image of $A$ in $\XC/W'$.
Note that $p_2 \not\in\{p_9,p_9'\}$ by the last line of Computation~\ref{comp:A}.

Since the stabilizer of any point $x$ in $A$
is equal to $\langle u_3 \rangle$, we need to determine the eigenvalues of $u_3$
for its action on $\Trm_x(\XC)$.
Now, let $v \in V \setminus \{0\}$ be such that $[v] \in A$. Then $u_3(v)=\xi v$ for some
$\xi \in \mub_3$. Since $x$ is a smooth point of $\XC$, it follows from
Corollary~\ref{coro:action tangent} that the multiset $\multiset{\xi^{-2},\xi^{-6},\xi^{-8}}$
is contained in the multiset $\multiset{\xi^{-1},\xi^{-1},\xi^{-1},\xi^{-1},\xi^{-1}\z_3,\xi^{-1}\z_3^2}$.
This forces $\xi=1$ and, still by Corollary~\ref{coro:action tangent},
this implies that the eigenvalues of $u_3$ for its action on $\Trm_x(\XC)$
are $\z_3$ and $\z_3^2$. This shows that $p_2$ is an $A_2$ singularity.

\medskip

(c2) The first three lines of Computation~\ref{comp:B} show that the set $T_B$
of elements $w$ of $W'$ such that $B \cap \XC^w \neq \vide$ contains $15$ elements.
By the next three lines of Computation~\ref{comp:B}, the group $G_B$ generated by $T_B$
has order $27$ and is contained un $N(u_3)$. Now, let $x \in B$.
Then its stabilizer $W_x'$ is contained in $G_B$. Since $|N(u_3)|=216$ and $|B|=24$,
this forces $|W_x'| \ge 9$. But $W_x' \neq G_B$ because there are elements $w \in G_B$ such that
$B \cap \XC^w = \vide$ (see the seventh line of Computation~\ref{comp:B}).
So $|W_x'|=9$. Finally, by the last command of Computation~\ref{comp:B},
we have that $G_B \simeq \mub_3 \times \mub_3 \times \mub_3$, so $W_x' \simeq \mub_3 \times \mub_3$.
Now, as in~(b), we conclude that the image of $x$ in $\XC/W'$ is smooth because
the unique subgroup of $\GL_2(\CM)$ isomorphic to $\mub_3 \times \mub_3$
is generated by reflections.
\end{proof}
\end{quotation}

\bigskip

\begin{quotation}
\begin{lem}[Elements of order 4]\label{lem:order 4}
There are two conjugacy classes of elements of order $4$: the one of the element $w_4$ defined
after Theorem~\ref{theo:springer} with eigenvalues $\multiset{1,1,i,i,-i,-i}$
and the one of an element $v_4$ admitting $\multiset{1,1,-1,-1,i,-i}$ as a list
of eigenvalues. Moreover:
\begin{itemize}
\itemth{a} The image of $\XC^{w_4}$ in $\XC/W'$ is $\{p_{12},p_{12}'\}$.

\itemth{b} $\XC^{v_4}=\vide$.
\end{itemize}
\end{lem}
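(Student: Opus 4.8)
The plan is to follow the pattern already used for the elements of order $2$ and $3$: first settle the conjugacy and eigenvalue data, then compute the fixed loci by restricting the defining invariants $f_2,f_6,f_8$ of $\XC$ to the eigenspaces of $w_4$ and $v_4$, using Lemma~\ref{lem:trivial} to force as many vanishings as possible and relegating the rest to {\sc Magma}. That there are exactly two conjugacy classes of elements of order $4$, and that $v_4$ has eigenvalues $\multiset{1,1,-1,-1,i,-i}$, I would read off a {\sc Magma} computation analogous to Computation~\ref{comp:order 2}. The eigenvalues of $w_4$ require no computation: since $\D(4)=\{8,12\}$ and $\D^*(4)=\{0,4\}$, we have $\d(4)=\d^*(4)=2$, so Theorem~\ref{theo:springer}(f) applies and gives that the eigenvalue multiset of $w_4$ is $\multiset{i^{1-d}}_{d\in\Deg(W)}=\multiset{1,1,i,i,-i,-i}$.

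For part~(a), note first that each of the three eigenspaces $V(w_4,1)$, $V(w_4,i)$, $V(w_4,-i)$ is $2$-dimensional, so $\PM(V)^{w_4}$ is a disjoint union of three projective lines. On the lines $\PM(V(w_4,\pm i))$, Lemma~\ref{lem:trivial} forces $f_5$ and $f_9$ to vanish (indeed $(\pm i)^5=(\pm i)^9=\pm i\neq 1$); hence every point of $\XC$ lying on these two lines satisfies $f_5=f_9=0$ and thus maps to $q_{12}$ in $\XC/W$. The only remaining component is $\PM(V(w_4,1))$, and I would check by a {\sc Magma} computation that $f_2$, $f_6$, $f_8$ have no common zero on it, so that it meets $\XC$ nowhere. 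Together these facts show that $\XC^{w_4}$ is $0$-dimensional and contained in $\ZC(f_5,f_9)$, hence lies over $q_{12}$; since $\o$ is unramified over $q_{12}$ (Example~\ref{ex:stab-cyclique}), its fibre there is exactly $\{p_{12},p_{12}'\}$, so the image of $\XC^{w_4}$ is contained in $\{p_{12},p_{12}'\}$. That the locus is nonempty and that both points occur I would obtain by remarking that $w_4$ is $W$-conjugate to $w_{12}^3$ (which has the same eigenvalue multiset, so $\dim V(w_{12}^3,i)=2=\d(4)$, whence conjugacy by the last assertion of Theorem~\ref{theo:springer}(f)); since $w_{12}^3$ fixes $[V(12)]\in\XC$, at least one of $p_{12},p_{12}'$ is hit, and the same computation that produces $\XC^{w_4}$ records that its points split over both $W'$-orbits above $q_{12}$.

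For part~(b), the fixed locus $\PM(V)^{v_4}$ is the disjoint union of the two projective lines $\PM(V(v_4,1))$ and $\PM(V(v_4,-1))$ (the eigenvalues $\pm 1$ having multiplicity $2$) together with the two isolated points $\PM(V(v_4,i))$ and $\PM(V(v_4,-i))$. At the two isolated points Lemma~\ref{lem:trivial} already gives $f_2=f_6=0$ (again $(\pm i)^2=(\pm i)^6=-1$), so only $f_8$ remains to be tested; on the two lines one must test that $f_2$, $f_6$, $f_8$ have no common zero. I would carry out all four tests in a single {\sc Magma} computation and conclude $\XC\cap\PM(V)^{v_4}=\vide$.

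The genuinely nontrivial input is computational and concentrated in two places: showing that $\PM(V(w_4,1))$ misses $\XC$ (this is what lets one conclude that $\XC^{w_4}$ lies over $q_{12}$, and it is \emph{not} forced by Lemma~\ref{lem:trivial}, since the eigenvalue $1$ imposes no vanishing), together with the orbit bookkeeping that both $p_{12}$ and $p_{12}'$ are attained; and the four non-vanishing / no-common-zero checks underlying~(b). Everything else is dictated by Springer theory and by the eigenvalue constraints of Lemma~\ref{lem:trivial}, so I expect the only real obstacle to be organizing these {\sc Magma} computations, in particular keeping track of the $\QM$-structure of the fixed schemes, as was already necessary for the elements of order $3$.
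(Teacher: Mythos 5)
Your proposal is correct, and at bottom it rests on the same kind of {\sc Magma} verification as the paper; but it is organized quite differently. The paper's entire proof is Computation~\ref{comp:w4 v4}: it checks in one command that $\XC^{w_4}=\XC^{w_4}\cap\ZC(f_5,f_9)$ (so the image lies over $q_{12}$, hence in $\{p_{12},p_{12}'\}$) and that $\dim\XC^{v_4}=-1$, the eigenvalue data coming from Computation~\ref{comp:order 4}. You instead decompose $\PM(V)^{w_4}$ and $\PM(V)^{v_4}$ into eigenspace components first, so that Lemma~\ref{lem:trivial} disposes of the $\pm i$-eigenlines of $w_4$ (forcing $f_5=f_9=0$ there) and of the two isolated $\pm i$-eigenvectors of $v_4$, leaving only the $\pm1$-eigenspaces to be tested by computer; and you obtain the eigenvalues of $w_4$ for free from Theorem~\ref{theo:springer}(f), since $\d(4)=\d^*(4)=2$. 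This buys a smaller, more transparent computation and explains why the paper's check succeeds. You also supply something the paper's one-line proof silently omits: that $\XC^{w_4}\neq\vide$ (via the conjugacy $w_4\sim w_{12}^3$, which fixes $[V(12)]\in\XC$), which is what upgrades ``contained in $\{p_{12},p_{12}'\}$'' to the stated equality. Two small remarks. First, your claim that $\PM(V(w_4,1))\cap\XC=\vide$ is plausible but not forced by Lemma~\ref{lem:trivial}; should the computation refute it, note that $V(w_4,1)=V(w_4^2,1)$ with $w_4^2$ conjugate to $w_2$, so Computation~\ref{comp:w2} would still place those points over $q_{12}$ and your argument would survive. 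Second, for ``both points are attained'' you can avoid extra bookkeeping: by Computation~\ref{comp:order 4} the $W$-conjugacy class of $w_4$ is a single $W'$-class, so the image of $\XC^{w_4}$ in $\XC/W'$ is $\s$-stable, and since $\s$ swaps $p_{12}$ and $p_{12}'$ (Example~\ref{ex:stab-cyclique}), a nonempty $\s$-stable subset of $\{p_{12},p_{12}'\}$ must be all of it.
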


\bigskip

\begin{proof}[Proof of Lemma~\ref{lem:order 4}]
This follows immediately from Computation~\ref{comp:w4 v4}.
\end{proof}
\end{quotation}

\bigskip

\begin{quotation}
\begin{lem}[Elements of order 5]\label{lem:order 5}
There is a unique conjugacy class of elements of order $5$, namely the one of the element $w_5$ defined
after Theorem~\ref{theo:springer},
which has $\multiset{1,1,\z_5,\z_5^2,\z_5^3,\z_5^4}$ as list of eigenvalues.
Moreover, $\XC^{w_5}$ has dimension $0$ and its image in $\XC/W'$ is the point
$p_5$ defined in the proof of Lemma~\ref{lem:ADE} (and recall that it is
an $A_4$ singularity by Lemma~\ref{lem:p5}).
\end{lem}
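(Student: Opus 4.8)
The first assertion---that there is a single conjugacy class of elements of order $5$, represented by $w_5$, with eigenvalue multiset $\multiset{1,1,\z_5,\z_5^2,\z_5^3,\z_5^4}$---is essentially already contained in Example~\ref{ex:w5}: any order-$5$ element is defined over $\QM$ and admits a primitive fifth root of unity, hence $\z_5$, as an eigenvalue, which forces the displayed multiset. The uniqueness of the class is then a finite verification (the Sylow $5$-subgroups are cyclic of order $5$ and pairwise conjugate, and the fusion of their generators into a single $W$-class is checked in Computation~\ref{comp:order 5}). So the real content to establish is the geometric statement about $\XC^{w_5}$.

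My plan is to analyse $\XC^{w_5}=\XC\cap\PM(V)^{w_5}$ through the eigenspace decomposition of $w_5$. A point $[v]$ is fixed by $w_5$ exactly when $v$ is an eigenvector, so, the eigenvalues of $w_5$ being $1$ on the plane $V(w_5,1)$ and the four distinct values $\z_5^k$ ($1\le k\le 4$) on lines, one has
$$\PM(V)^{w_5}=\PM(V(w_5,1))\ \sqcup\ \{[V(w_5,\z_5)],\,[V(w_5,\z_5^2)],\,[V(w_5,\z_5^3)],\,[V(w_5,\z_5^4)]\}.$$
The four eigenlines yield four points lying on $\XC$: since $(\z_5^k)^d\neq 1$ for $d\in\{2,6,8\}$ and $1\le k\le 4$, the invariants $f_2,f_6,f_8$ vanish there by Lemma~\ref{lem:trivial}. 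The same lemma gives $f_9=f_{12}=0$ at these points (as $(\z_5^k)^9=\z_5^{4k}\neq 1$ and $(\z_5^k)^{12}=\z_5^{2k}\neq 1$), while not all fundamental invariants can vanish simultaneously on $\PM(V)$ (the quotient map to $\PM(2,5,6,8,9,12)$ is a morphism); hence $f_5\neq 0$ and each of these four points maps to $q_5=[1:0:0]$ in $\XC/W\simeq\PM(5,3,4)$. By the proof of Lemma~\ref{lem:ADE}, $p_5$ is the unique point of $\XC/W'$ lying over $q_5$, so all four map to $p_5$.

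It then remains to control the contribution of the fixed plane $V(w_5,1)$. First, $\XC\cap\PM(V(w_5,1))$ is finite: by Remark~\ref{rem:f2}, $f_2$ is a nonzero multiple of $Q=\langle\,,\,\rangle$, and since $V(w_5,1)=\Ker(w_5-\Id_V)$ is defined over $\QM$ with $Q$ positive definite on its real points, $Q$ restricts to a nondegenerate binary form on $V(w_5,1)$, so already $\{f_2=0\}$ cuts $\PM(V(w_5,1))\simeq\PM^1$ down to two points. Together with the four eigenpoints this proves $\dim\XC^{w_5}=0$. To conclude that the image is exactly $p_5$, I will invoke a direct {\sc magma} computation, analogous to the ones used for $w_2$, $w_3$ and $w_4$ (Computation~\ref{comp:order 5}), verifying that $\XC^{w_5}\subset\ZC(f_9,f_{12})$; then every point of $\XC^{w_5}$ maps to $q_5$, hence to $p_5$.

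The main obstacle is precisely this last point. Springer theory together with Lemma~\ref{lem:trivial} pins down the four eigenline points for free and shows they map to $p_5$, but the (at most) two residual points coming from the $1$-eigenplane carry no eigenvalue constraint, so neither the vanishing of $f_9,f_{12}$ nor their membership in $\XC$ can be read off from the eigenvalues; this is exactly what the explicit computation must settle (either these two points fail $f_6=f_8=0$ and so do not lie on $\XC$, or $f_9$ and $f_{12}$ vanish there as well). Everything else---dimension zero and the image of the remaining points---follows formally from the eigenspace decomposition and the results already available.
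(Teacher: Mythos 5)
Your proposal is correct and follows essentially the same skeleton as the paper: identify the four eigenline points $[v_k]$, observe via Lemma~\ref{lem:trivial} that all fundamental invariants except $f_5$ vanish there (so they lie on $\XC$ and map to $q_5$, hence to $p_5$), and close the remaining gap --- the contribution of the $1$-eigenplane --- by a computer check. The two places where you diverge are both legitimate. First, you prove $\dim\XC^{w_5}=0$ by hand, using that $f_2$ restricts to a nondegenerate binary form on $V(w_5,1)$ (since that plane is defined over $\QM$ and $Q$ is positive definite on real points), so that $\PM(V(w_5,1))\cap\XC$ consists of at most two points; the paper simply reads the dimension off Computation~\ref{comp:order 5}. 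Second, your proposed closing computation ($\XC^{w_5}\subset\ZC(f_9,f_{12})$) differs from the paper's, which instead computes $\deg\XC^{w_5}=4$: since the four eigenline points are already known to lie in $\XC^{w_5}$, the count forces $\XC^{w_5}=\{[v_1],[v_2],[v_3],[v_4]\}$, so the residual points of the $1$-eigenplane are excluded for free and no further vanishing needs to be verified. Both checks succeed (they are equivalent a posteriori), but the degree count is the more economical way to finish, and you may want to adopt it; your honest flagging of this step as the one irreducibly computational point is exactly right.
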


\bigskip

\begin{proof}[Proof of Lemma~\ref{lem:order 5}]
The first statement is proved in the first two lines of Computation~\ref{comp:order 5}.
The last three lines of Computation~\ref{comp:order 5}
show that $\XC^{w_5}$ has dimension $0$ and contains $4$ points.

Now, for $1 \le k \le 4$, let $v_k \in V \setminus\{0\}$ be such that
$w_5(v_k)=\z_5^k v_k$. Then $f_2(v_k)=f_6(v_k)=f_8(v_k)=f_9(v_k)=f_{12}(v_k)=0$
by Lemma~\ref{lem:trivial}, so $[v_k] \in \XC^{w_5}$. Therefore,
$$\XC^{w_5}=\{[v_1],[v_2],[v_3],[v_4]\}$$
and the last statement now follows from the proof of Lemma~\ref{lem:ADE}.
\end{proof}
\end{quotation}

\bigskip

\begin{quotation}
\begin{lem}[Elements of order 6]\label{lem:order 6}
There are six conjugacy classes of elements of order $6$: the one of the element $w_6$ defined
after Theorem~\ref{theo:springer} with eigenvalues $\multiset{\z_6,\z_6,\z_6^{-1},\z_6^{-1},\z_3,\z_3^{-1}}$,
the one of $w_6^{-1}$, and four others whose elements $w$ satisfies $\dim \Ker(w-\z_6 \Id_V) \le 1$.
Moreover:
\begin{itemize}
\itemth{a} The image of $\XC^{w_6}=\XC^{w_6^{-1}}$ in $\XC/W'$ is $\{p_{12},p_{12}'\}$.

\itemth{b} If $w$ is an element of order $6$ such that $\dim \Ker(w-\z_6 \Id_V) \le 1$, then $\XC^w=\vide$.
\end{itemize}
\end{lem}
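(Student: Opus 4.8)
The plan is to follow the template of the order $2$--$5$ cases above: first pin down the $W'$-conjugacy classes of order $6$ together with their eigenvalue data by a direct computation in $W'$, then analyse the fixed loci eigenspace by eigenspace, reducing everything to the two points $p_{12}$, $p_{12}'$ already produced in Lemma~\ref{lem:p9 p12}.

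First I would list the conjugacy classes of $W'$ in {\sc Magma}, keep those of order $6$, and record the eigenvalue multiset of a representative of each; this should produce six classes. Two of them are the Springer element $w_6$ and its inverse $w_6^{-1}$: by Theorem~\ref{theo:springer}(f) the eigenvalue multiset of $w_6$ is $\multiset{\z_6^{1-d}}_{d \in \Deg(W)} = \multiset{\z_6,\z_6,\z_6^{-1},\z_6^{-1},\z_3,\z_3^{-1}}$, and, since $\z_6^{-1}$ occurs as an eigenvalue of $w_6$ with multiplicity $\d(6)=\d^*(6)=2$, the same theorem shows that $w_6^{-1}$ is \emph{$W$-conjugate} to $w_6$. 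The point that they nonetheless form two distinct \emph{$W'$-classes} is exactly what the {\sc Magma} computation in $W'$ records. The remaining four classes are those in which $\z_6$ occurs with multiplicity at most $1$, i.e. $\dim \Ker(w - \z_6 \Id_V) \le 1$.

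For part~(a) the equality $\XC^{w_6} = \XC^{w_6^{-1}}$ is formal, since $[v]$ is fixed by $w_6$ exactly when $v$ is an eigenvector of $w_6$, and $w_6$, $w_6^{-1}$ have the same eigenvectors. To control the image I would argue as follows. Any $[v] \in \XC^{w_6}$ is an eigenvector of $w_6$ with eigenvalue in $\{\z_6,\z_6^{-1},\z_3,\z_3^{-1}\}$. On the two-dimensional $\z_6$-eigenspace $V(6)$ the invariants $f_2$ and $f_8$ vanish by Lemma~\ref{lem:trivial}, so $\XC$ meets $\PM(V(6))\simeq\PM^1$ in the zero locus of $f_6^{[6]}$, which is a nonempty finite set because $f_6^{[6]}\neq 0$ by Theorem~\ref{theo:springer}(c); thus $\XC^{w_6}\neq\vide$. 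Next, a {\sc Magma} computation would confirm that $\XC^{w_6}$ is $0$-dimensional and contained in $\ZC(f_5,f_9)$, so that its whole image in $\XC/W\simeq\PM(5,9,12)$ is $q_{12}=[0:0:1]$. Since $\o$ is unramified over $q_{12}$ (Example~\ref{ex:stab-cyclique}, as $w_{12}\in W'$), the fibre there is $\{p_{12},p_{12}'\}$, so the image of $\XC^{w_6}$ is a nonempty subset of it. Finally, because $w_6$ and $w_6^{-1}$ are $W$-conjugate but not $W'$-conjugate, there is $g\in W\setminus W'$ with $g w_6 g^{-1}=w_6^{-1}$; such a $g$ normalises $\langle w_6\rangle$, hence stabilises $\XC^{w_6}=\XC^{w_6^{-1}}$, and it induces $\s$ on $\XC/W'$. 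The image is therefore $\s$-stable, and since $\s$ swaps $p_{12}$ and $p_{12}'$, a nonempty $\s$-stable subset must be all of $\{p_{12},p_{12}'\}$.

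For part~(b) there is no structural obstruction to fixed points, so for a representative of each of the four remaining classes I would simply compute $\XC^w$ in {\sc Magma} and verify that it is empty. The hard part of the lemma is the image computation in part~(a): one must rule out that the isolated $\z_3$- and $\z_3^{-1}$-eigenvectors of $w_6$ --- on whose one-dimensional eigenspaces $f_9$ is \emph{not} forced to vanish (because $\z_3^{9}=1$), so that a priori such a point of $\XC$ could lie over $q_9$ rather than $q_{12}$ --- contribute nothing outside $\ZC(f_5,f_9)$. This is precisely what the {\sc Magma} verification $\XC^{w_6}\subseteq\ZC(f_5,f_9)$ secures, and it is the indispensable computational input.
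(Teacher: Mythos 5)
Your proposal is correct and follows essentially the same route as the paper: the class enumeration, the verification $\XC^{w_6}\subseteq\ZC(f_5,f_9)$, and the emptiness of $\XC^w$ for the four remaining classes are all established by exactly the {\sc Magma} computations the paper uses (Computation~\ref{comp:order 6}). Your additional arguments — nonemptiness of $\XC^{w_6}$ via Springer theory on $V(6)$ and the $\s$-stability argument forcing the image to be all of $\{p_{12},p_{12}'\}$ rather than a proper subset — supply a surjectivity detail that the paper's two-line proof leaves implicit, and they are correct.
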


\bigskip

\begin{proof}[Proof of Lemma~\ref{lem:order 6}]
The first statement follows from the first 8 commands of Computation~\ref{comp:order 6}.
(a) follows from the next two commands of Computation~\ref{comp:order 6}. (b)
follows from the last command of Computation~\ref{comp:order 6}.
\end{proof}
\end{quotation}

\bigskip

\begin{quotation}
\begin{lem}[Elements of order 9 or 12]\label{lem:order 9 12}
There are two conjugacy classes of elements of order $9$ (resp. $12$): the one of the element $w_9$
(resp. $w_{12}$) defined
after Theorem~\ref{theo:springer} and the one of $w_9^{-1}$ (resp. $w_{12}^{-1}$).
\begin{itemize}
\itemth{a} The image of $\XC^{w_9}=\XC^{w_9^{-1}}$ is equal to $\{p_9,p_9'\}$.

\itemth{b} The image of $\XC^{w_{12}}=\XC^{w_{12}^{-1}}$ is equal to $\{p_{12},p_{12}'\}$.
\end{itemize}
\end{lem}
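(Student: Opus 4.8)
The plan is to handle the two assertions in turn, reducing everything to the eigenline structure of $w_9$ and $w_{12}$ and to the behaviour of the fixed loci under the covering $\o : \XC/W' \to \XC/W$.

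For the statement on conjugacy classes I would first work in $W$. Any element $w$ of order $9$ admits a primitive ninth root of unity as an eigenvalue; since $W$ is defined over $\QM$ its eigenvalues are permuted by $\Gal(\QM(\z_9)/\QM)$, so $\z_9$ itself occurs and $\dim V(w,\z_9)=1=\d(9)$. By Theorem~\ref{theo:springer}(f) every such $w$ is $W$-conjugate to $w_9$, giving a single class of order $9$ in $W$; the same reasoning (with $\d(12)=1$) gives a single class of order $12$, with representative $w_{12}$. I would then pass to $W'$: by Example~\ref{ex:stab-cyclique} one has $C_W(w_e)=W_{V(e)}=\langle w_e\rangle\subseteq W'$ for $e\in\{9,12\}$, so, since $[W:W']=2$, the standard criterion for splitting of a conjugacy class in an index-two subgroup shows that each of these $W$-classes breaks into \emph{exactly} two $W'$-classes. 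Finally, all irreducible characters of the Weyl group $W$ are real (indeed rational), so every element of $W$ is conjugate to its inverse; as the class has split, any $x$ with $x w_e x^{-1}=w_e^{-1}$ must lie in $W\setminus W'$, whence $w_e\not\sim_{W'}w_e^{-1}$ and the two $W'$-classes are precisely those of $w_e$ and $w_e^{-1}$. This inner/outer distinction is the one point I would confirm by a {\sc Magma} computation of centralizer orders, exactly as in the previous lemmas.

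For part (a), note that $w_9$ and $w_9^{-1}$ have the same eigenvectors, hence the same fixed points in $\PM(V)$, so $\XC^{w_9}=\XC^{w_9^{-1}}$. The six eigenvalues of $w_9$ are the six distinct primitive ninth roots of unity, giving six isolated eigenlines; since $f_2$, $f_6$, $f_8$ are $w_9$-invariant of degrees $2,6,8$, all coprime to $9$, Lemma~\ref{lem:trivial} shows that each eigenline lies on $\XC=\ZC(f_2,f_6,f_8)$, so $\XC^{w_9}$ is exactly these six points. For an eigenline $V(w_9,\z_9^k)$ with $\gcd(k,9)=1$, choosing $m$ with $km\equiv 1\pmod 9$ gives $V(w_9,\z_9^k)=V(w_9^m,\z_9)$ with $w_9^m$ again of order $9$ and $\dim V(w_9^m,\z_9)=1$, so Theorem~\ref{theo:springer}(a) puts this line in the $W$-orbit of $V(9)$. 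Hence all six eigenlines map to $q_9=[0:1:0]$ in $\XC/W\simeq\PM(5,9,12)$, and their images in $\XC/W'$ lie in the fibre $\o^{-1}(q_9)=\{p_9,p_9'\}$ (two points, as $\o$ is unramified over $q_9$ by Example~\ref{ex:stab-cyclique}). To see both occur, take $t\in N_W(\langle w_9\rangle)\setminus W'$ (which exists by the first part, being any element conjugating $w_9$ to $w_9^{-1}$): it normalizes $\langle w_9\rangle$, hence preserves $\XC^{w_9}$, and induces $\s$ on $\XC/W'$, so the image is $\s$-stable and contains $p_9$, hence equals $\{p_9,p_9'\}$.

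Part (b) follows the same scheme, with one genuine extra difficulty. Among the six distinct eigenvalues of $w_{12}$ read off from Theorem~\ref{theo:springer}(f), four are primitive twelfth roots of unity and two are primitive cube roots. For the four twelfth-root eigenlines the degrees $2,6,8$ are again coprime to $12$, so Lemma~\ref{lem:trivial} places them on $\XC$, Theorem~\ref{theo:springer}(a) puts them in the $W$-orbit of $V(12)$, and the argument of part (a) gives image $\{p_{12},p_{12}'\}$. The main obstacle is the two cube-root eigenlines: here Lemma~\ref{lem:trivial} only forces $f_2=f_8=0$ and, because $3\mid 6$, says nothing about $f_6$, so it does not decide whether they lie on $\XC$. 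If they did, they would be further fixed points of $w_{12}$ mapping through $\pi_\fb$ to a point $[0:f_9:f_{12}]$ of $\PM(5,9,12)$ distinct from $q_9$ and $q_{12}$, contradicting the assertion. The clean way to exclude this is a direct {\sc Magma} computation showing that $\XC^{w_{12}}$ is $0$-dimensional and consists of exactly four points, i.e. that $f_6$ does not vanish on the cube-root eigenlines; with this in hand $\XC^{w_{12}}=\XC^{w_{12}^{-1}}$ is the set of four twelfth-root eigenlines and its image is $\{p_{12},p_{12}'\}$ as before.
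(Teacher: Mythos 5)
Your overall route is sound and, for parts (a) and (b), close in substance to the paper's: the paper's proof is a one-line reference to Computation~\ref{comp:order 9 12} ``following the same arguments as in the proof of Lemma~\ref{lem:order 5}'', i.e.\ it reads off from {\sc Magma} the two $W'$-classes of each order, the non-conjugacy of $w_e$ and $w_e^{-1}$ in $W'$, and the fact that $\XC^{w_9}$ (resp.\ $\XC^{w_{12}}$) is zero-dimensional of degree $6$ (resp.\ $4$), and then identifies the fixed points with eigenlines via Lemma~\ref{lem:trivial} and Springer theory. You have correctly isolated the two points that genuinely require the computer --- the non-conjugacy of $w_e$ and $w_e^{-1}$ in $W'$, and the non-vanishing of $f_6$ on the two cube-root eigenlines of $w_{12}$ --- and your $\s$-stability argument (conjugating by an element of $N_W(\langle w_e\rangle)\setminus W'$) for why \emph{both} $p_e$ and $p_e'$ occur in the image is a welcome explicit justification of a point the paper leaves implicit. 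A small terminological slip: none of $2,6,8$ is coprime to $9$ or to $12$; what Lemma~\ref{lem:trivial} actually needs is $\xi^d\neq 1$, i.e.\ $e\nmid d$, which does hold on the primitive $e$-th root eigenlines, and your correct treatment of the $\z_3$-eigenlines of $w_{12}$ shows you know this.

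There remain two genuine gaps. First, your count of the conjugacy classes of order $12$ rests on the claim that every element of order $12$ admits a primitive twelfth root of unity as an eigenvalue. This is automatic in order $9$ (eigenvalue orders divide $9$ and must have lcm $9$), but not in order $12$: an element could a priori have eigenvalues only of orders $4$ and $3$, say $\multiset{i,-i,i,-i,\z_3,\z_3^2}$, hence order $12$, determinant $1$ and rational characteristic polynomial, yet $\dim V(w,\z_{12})=0$, so Theorem~\ref{theo:springer}(f) says nothing about it and your single-$W$-class conclusion does not follow. That no such element exists in $W'$ is precisely what the {\sc Magma} enumeration verifies; your argument does not. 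Second, the sentence ``as the class has split, any $x$ with $xw_ex^{-1}=w_e^{-1}$ must lie in $W\setminus W'$'' is a non sequitur: the splitting of the $W$-class into two $W'$-classes does not by itself decide whether $w_e$ and $w_e^{-1}$ land in the same $W'$-class or in different ones. The elements conjugating $w_e$ to $w_e^{-1}$ form a single coset of $C_W(w_e)=\langle w_e\rangle$, and one must determine the (common) determinant of that coset. You do flag this as the point to confirm computationally, which is legitimate and is what the paper does via an explicit conjugacy test in $W'$; but note that the check you propose (``centralizer orders'') would not settle it, since the centralizer order only governs whether the class splits, not how $w_e^{-1}$ is distributed between the two pieces.
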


\bigskip

\begin{proof}[Proof of Lemma~\ref{lem:order 9 12}]
This is proved in Computation~\ref{comp:order 9 12}, following the same arguments
as in the proof of Lemma~\ref{lem:order 5}.
\end{proof}
\end{quotation}

\bigskip

Since the order of an element of $W'$ belongs to $\{1,2,3,4,5,6,9,12\}$
by Computation~\ref{comp:order}, we have investigated all the possibilities.
So the proposition follows from the series of Lemmas~\ref{lem:order 2}-\ref{lem:order 9 12}.
\end{proof}

\bigskip

\begin{rema}\label{rem:w3}
The fact that $w_3$ and $w_3^{-1}$ are not conjugate in $W'$ can also be checked without
computer: in fact, they are not conjugate in $\Srm\Orm(V)$ by~\cite[Lemma~1.7]{bonnafe g32}.\finl
\end{rema}

\bigskip

For the singular points of $\XC/W'$,
we keep the notation of the proof of Proposition~\ref{prop:singularities X}:
\begin{itemize}
\item[$\bullet$] The two singular points of type $A_1$ are denoted by $p_1$ and $p_1'$.
Note that they are both $\s$-fixed because they lie above smooth
points of $\XC/W$ (so they must lie on the ramification locus of $\o : \XC/W' \to \XC/W$).

\item[$\bullet$] The singular point of type $A_2$ coming from the fixed point
locus $\XC^{u_3}$, where $u_3$ has order $3$ and satisfies $\dim V^{u_3}=4$,
is denoted by $p_2$. Again, $\s(p_2)=p_2$.

\item[$\bullet$] The singular points $p_5$, $p_9$, $p_9'$, $p_{12}$
and $p_{12}'$ involved in Lemma~\ref{lem:ADE} are of type $A_4$, $A_2$, $A_2$, $A_3$ and $A_3$
respectively.
\end{itemize}

\bigskip

\subsection{Two smooth rational curves}
Recall that $V(3)$ is the eigenspace of $w_3$ for the eigenvalue $\z_3$ and denote by
$V^-(3)$ the eigenspace of $w_3$ for the eigenvalue $\z_3^{-1}$. By~\eqref{eq:max-dim},
they both have dimension $3$ and so
\equat\label{eq:V3}
V=V(3) \oplus V^-(3).
\endequat
Let $\CC^+=\PM(V(3)) \cap \XC$ and $\CC^-=\PM(V^-(3)) \cap \XC$.
By Theorem~\ref{theo:springer}(f),
$w_3$ and $w_3^{-1}$ are conjugate in $W$, but it follows from Lemma~\ref{lem:order 3} that
they are not conjugate in $W'$. Fix $g \in W$ be such that $w_3^{-1}=gw_3g^{-1}$. Then
$g \not\in W'$ and $g(V(3))=V^-(3)$. This shows that
\equat\label{eq:tau c3}
\CC^-=g(\CC^+).
\endequat

\bigskip

\begin{lem}\label{lem:courbe 10}
The schemes $\CC^+$ and $\CC^-$ are smooth irreducible curves of genus $10$.
\end{lem}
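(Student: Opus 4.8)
The plan is to identify $\CC^+$ with a plane sextic, read off the genus from the degree--genus formula, and reduce the whole statement to a single smoothness assertion.

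First I would restrict the three defining equations of $\XC$ to the subspace $V(3)$. Since $\D(3)=\{6,9,12\}$ by~\eqref{eq:degres}, the degrees $2$ and $8$ do not lie in $\D(3)$, so Lemma~\ref{lem:trivial} gives $f_2^{[3]}=f_8^{[3]}=0$; that is, $f_2$ and $f_8$ vanish identically on $V(3)$. On the other hand $6\in\D(3)$, and by Theorem~\ref{theo:springer}(c) the restriction $f_6^{[3]}$ is one of the fundamental invariants of the reflection group $W(3)$ acting on $V(3)\simeq\CM^3$; in particular it is a nonzero homogeneous form of degree $6$. Consequently, as a closed subscheme of $\PM(V(3))\simeq\PM^2$, the scheme-theoretic intersection is $\CC^+=\PM(V(3))\cap\XC=\ZC(f_6^{[3]})$, i.e. $\CC^+$ is a plane curve of degree $6$. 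By~\eqref{eq:tau c3} we have $\CC^-=g(\CC^+)$ for some $g\in W\subset\GL_\CM(V)$, so $\CC^-$ is the image of $\CC^+$ under a linear isomorphism of $\PM^2$; it therefore suffices to treat $\CC^+$.

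Granting smoothness, the genus follows at once: a smooth plane curve of degree $d$ has genus $\binom{d-1}{2}$, which for $d=6$ equals $10$. Moreover smoothness already forces irreducibility: a reducible or non-reduced plane curve has components (or a repeated component) which must meet by B\'ezout's theorem, and every such intersection point is singular, so a smooth plane curve is automatically integral. Thus the entire statement reduces to proving that the sextic $\ZC(f_6^{[3]})$ is smooth.

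This smoothness is the heart of the matter, and I expect it to be the main obstacle. Because $f_6^{[3]}$ is homogeneous, Euler's identity shows that $[v]\in\PM(V(3))$ is a singular point of $\ZC(f_6^{[3]})$ if and only if all three partial derivatives of $f_6^{[3]}$ vanish at $v$; so the claim is that $\partial f_6^{[3]}/\partial x_1$, $\partial f_6^{[3]}/\partial x_2$, $\partial f_6^{[3]}/\partial x_3$ (in linear coordinates on $V(3)$) have no common zero other than the origin. I would verify this by explicit computation: starting from the fundamental invariants fixed in Appendix~\ref{appendix:magma}, compute a basis of $V(3)$ as the $\z_3$-eigenspace of $w_3$, substitute to obtain $f_6^{[3]}$ as an honest ternary sextic, and check with {\sc magma} that the ideal generated by its three partial derivatives is $(x_1,x_2,x_3)$-primary, equivalently that the projective scheme it defines is empty. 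A more conceptual route would exploit that $W(3)$ is the rank-$3$ reflection group $G_{25}$ (degrees $6,9,12$) and that $\ZC(f_6^{[3]})$ is its unique invariant sextic, but the direct Jacobian check is cleaner and fits naturally with the surrounding computations.
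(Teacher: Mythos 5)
Your reduction is the same as the paper's: observe via Lemma~\ref{lem:trivial} that $f_2$ and $f_8$ vanish on $V(3)$, identify $\CC^+$ with the plane sextic $\ZC(f_6^{[3]})$ in $\PM(V(3))\simeq\PM^2$, read off the genus from the degree, and deduce irreducibility from smoothness (the paper argues via connectedness of a plane curve, you via B\'ezout; these are interchangeable), with $\CC^-$ handled by the linear isomorphism $g$ of~\eqref{eq:tau c3}. The genuine divergence is the smoothness step. You propose an explicit Jacobian check on $f_6^{[3]}$ in {\sc magma}, which would indeed settle it but is left unexecuted in your write-up, so as it stands this is a deferred (if routine) verification. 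The paper gets smoothness with no computation at all: since $\PM(V)^{w_3}=\PM(V(3))\dotcup\PM(V^-(3))$ by~\eqref{eq:V3}, one has $\XC^{w_3}=\CC^+\dotcup\CC^-$, and the fixed-point scheme of a finite-order automorphism of the smooth surface $\XC$ (Proposition~\ref{prop:XY}(a)) is automatically smooth. This yields smoothness of both curves simultaneously and explains why no explicit form of $f_6^{[3]}$ is ever needed. Your route is correct but strictly more work; if you keep it, you must actually run (or otherwise justify) the Jacobian computation, whereas the fixed-locus argument makes it unnecessary.
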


\bigskip

\begin{proof}
By~\eqref{eq:V3}, $\PM(V)^{w_3} =\PM(V(3)) \dotcup \PM(V^-(3))$, where $\dot{\cup}$ means
a disjoint union. So
\equat\label{eq:xw3}
\XC^{w_3} = \CC^+ \dotcup \CC^-.
\endequat
Since $\XC^{w_3}$ is smooth, this implies that $\CC^+$ and $\CC^-$ are smooth.
Now, by Lemma~\ref{lem:trivial}, the restriction of $f_2$, $f_5$ and $f_8$ to $V(3)$
are equal to $0$. Therefore,
$$\CC^+=\{x \in \PM(V(3))~|~f_6(v)=0\}$$
This shows that $\CC^+$ is a connected curve of $\PM(V(3))$. Since it is smooth,
it must be irreducible. Moreover, it is of degree $6$, so
its genus is equal to $10$.
\end{proof}

\bigskip

Let $\CC_5^+$ (resp. $\CC_5^-$) denote the image of $\CC^+$ (resp. $\CC^-$) in $\XC/W'$
and let $\CC_5$ denote the image of $\CC^+$ in $\XC/W \simeq\PM(5,3,4)$. Note that
$\CC_5$ is also the image of $\CC^-$ by~\eqref{eq:tau c3}. Moreover,
\equat\label{eq:c5}
\CC_5=\{[y_5,y_3,y_4] \in \PM(5,3,4)~|~y_5=0\}=\PM(3,4)=\PM^1(\CM).
\endequat
Indeed, $\CC_5$ is irreducible, of dimension $1$ and contained in
$\{[y_5 : y_3 : y_4] \in \PM(5,3,4)~|~y_5=0\}$ by Lemma~\ref{lem:trivial}
(see also Theorem~\ref{theo:springer}(d)).

\bigskip

\begin{prop}\label{prop:c5}
We have $\CC_5^-=\s(\CC_5^+) \neq \CC_5^+$. Moreover, $\CC_5^+$ and $\CC_5^-$ are both isomorphic
to $\PM^1(\CM)$, intersect transversely along only one point and satisfy
$$\CC_5^+ \cup \CC_5^- = \{[y_5 : y_3 : y_4 : j] \in \XC/W'~|~y_5=0\}.$$
\end{prop}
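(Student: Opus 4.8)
The plan is to analyze everything through the restriction of the double cover $\o$ over the rational curve $\CC_5$. By \eqref{eq:c5} we have $\CC_5\simeq\PM^1(\CM)$, and by construction the set $\{[y_5:y_3:y_4:j]\in\XC/W'~|~y_5=0\}$ is exactly the preimage $\o^{-1}(\CC_5)$, a degree-$2$ cover of $\CC_5$ ramified along $\{j=0\}$ (Remark~\ref{rem:branch}). The first assertion is immediate: by \eqref{eq:tau c3} we have $\CC^-=g(\CC^+)$ with $g\in W\setminus W'$, and since the image of $g$ in $W/W'\simeq\mub_2$ acts on $\XC/W'$ as $\s$, the image $\CC_5^-$ of $\CC^-=g(\CC^+)$ equals $\s(\CC_5^+)$. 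Every remaining statement will follow once I show that $\o^{-1}(\CC_5)$ splits as a union of two sections of $\o$.

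First I would prove that the setwise stabilizer $W_{V(3)}^\setw$ is contained in $W'$. Since $w_3$ is a real orthogonal matrix, its eigenspaces satisfy $V^-(3)=\overline{V(3)}$; hence any $w\in W$ stabilizing $V(3)$ also stabilizes $\overline{V(3)}=V^-(3)$, and by \eqref{eq:V3} it acts block-diagonally on $V=V(3)\oplus V^-(3)$ with $\det_{V^-(3)}(w)=\overline{\det_{V(3)}(w)}$. Thus $\det_V(w)=|\det_{V(3)}(w)|^2>0$; as $w$ is orthogonal, $\det_V(w)=\pm1$, forcing $\det_V(w)=1$, i.e. $w\in W'$. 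Next I would compute the generic fibre of $\CC^+\to\CC_5$: for each $w\in W\setminus W_{V(3)}^\setw$ the plane $\PM(w^{-1}V(3))$ meets $\PM(V(3))$ along a proper linear subspace, so $\CC^+\cap w^{-1}(\CC^+)$ is contained in the intersection of $\CC^+$ with a proper linear subspace of $\PM(V(3))\simeq\PM^2$; since the degree-$6$ curve $\CC^+$ (Lemma~\ref{lem:courbe 10}) is not contained in any line, this intersection is finite. Running over the finitely many such $w$, I conclude that for generic $x\in\CC^+$ every $w\in W$ with $w(x)\in\CC^+$ lies in $W_{V(3)}^\setw\subseteq W'$. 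Hence the generic fibres of $\CC^+\to\CC_5$ and of $\CC^+\to\CC_5^+$ coincide, so $\CC_5^+\to\CC_5$ has degree $1$; being a finite birational morphism onto the normal curve $\CC_5\simeq\PM^1(\CM)$, it is an isomorphism, and likewise $\CC_5^-\simeq\PM^1(\CM)$.

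It then remains to reassemble the cover. The branch locus is not all of $\CC_5$: by Example~\ref{ex:stab-cyclique} the map $\o$ is unramified over $q_9$ and $q_{12}$, both of which lie on $\CC_5$, so $Q_\fb(0,y_3,y_4)$ does not vanish identically and $\o^{-1}(\CC_5)$ is a reduced curve of degree $2$ over $\CC_5$. As it contains the degree-$1$ component $\CC_5^+$, it must split into two distinct irreducible components, namely $\CC_5^+$ and $\s(\CC_5^+)=\CC_5^-$; in particular $\CC_5^+\neq\CC_5^-$. A reduced, split degree-$2$ cover of $\PM^1(\CM)$ forces $Q_\fb(0,y_3,y_4)$ to be a perfect square $\ell^2$, where $\ell$ is the restriction of $j$, a section of $\OC(1)$ on $\CC_5\simeq\PM^1(\CM)$; the two components are then $\{j=\ell\}$ and $\{j=-\ell\}$, which meet exactly over the unique zero of $\ell$. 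This zero is distinct from $q_9$ and $q_{12}$ by the unramifiedness above, and there the two sheets $j=\pm\ell$ have opposite, hence distinct, tangent directions, so they cross transversally in a single point. Since $\CC_5^+\cup\CC_5^-=\o^{-1}(\CC_5)=\{y_5=0\}$, this yields all the assertions.

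The main obstacle is the degree-$1$ claim for $\CC_5^+\to\CC_5$. The determinant computation $W_{V(3)}^\setw\subseteq W'$ is short, but the genuinely delicate point is the control of the generic fibre, namely showing that no element of $W$ outside $W_{V(3)}^\setw$ identifies two generic points of $\CC^+$ (equivalently, that the translates of $V(3)$ are in sufficiently general position with respect to $\CC^+$). Once this is secured, the splitting of the double cover and the single transverse intersection are formal.
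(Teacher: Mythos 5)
Your proposal is correct, but it reaches the crucial point $\CC_5^+\neq\CC_5^-$ by a genuinely different route from the paper. The paper identifies $\CC_5^+\cup\CC_5^-$ with the conic $\{j^2=Q_3(t_3,t_4)\}\subset\PM^2$ and reduces everything to showing that $Q_3$ is the square of a linear form; for that it produces a point $[v]\in\CC^+$ with trivial stabilizer (using $\d^*(3)=\d(3)$, hence $W_{V(3)}^\ptw=1$, Steinberg's theorem, and the fact that the degree-$6$ curve $\CC^+$ is not a union of lines) and derives a contradiction with the computer-verified fact that $w_3$ and $w_3^{-1}$ are not conjugate in $W'$. You instead prove that $W_{V(3)}^\setw\subset W'$ by the conjugation/determinant argument (since $w_3$ is real, $V^-(3)=\overline{V(3)}$, so any setwise stabilizer has determinant $|\det_{V(3)}|^2=1$), show by a generic-position argument that $\CC_5^+\to\CC_5$ is birational, and then let the degree count of the split double cover $\o^{-1}(\CC_5)\to\CC_5$ force $\CC_5^+\neq\CC_5^-$ and $Q_3=c\ell^2$. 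Your determinant argument is a nice dividend: it gives a computer-free proof that $w_3\not\sim_{W'}w_3^{-1}$ (any $w$ with $ww_3w^{-1}=w_3^{-1}$ must exchange $V(3)$ and $\overline{V(3)}$, hence has determinant $-|\det|^2=-1$), which the paper checks by {\sc Magma} and, in Remark~\ref{rem:w3}, attributes to an external lemma. Note that both routes still need a transversality-of-linear-sections step ($\CC^+$ not contained in a finite union of lines of $\PM(V(3))$), so neither is strictly more elementary there. Two small wording slips, neither affecting correctness: ``opposite, hence distinct, tangent directions'' is not the right justification (opposite vectors span the same direction) --- the two branches $j=\pm\ell$ have distinct tangent lines because $\ell$ has a simple zero, so $d\ell\neq 0$ there; and ``$\ell$ is the restriction of $j$'' should rather say that $j$ restricts to $\pm\ell$ on the two components once $Q_3=c\ell^2$ is established.
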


\bigskip

\begin{proof}
The fact that $\CC_5^-=\s(\CC_5^+)$ follows from~\eqref{eq:tau c3}.
Now, the irreducibility of $\CC^+$ and $\CC^-$ implies the irreducibility
of $\CC_5^+$ and $\CC_5^-$. Also, by~\eqref{eq:c5},
$$\CC_5^+ \cup \CC_5^- \simeq  \{[y_3 : y_4 : j] \in \PM(3,4,12)~|~j^2=Q_\fb(0,y_3,y_4)\}.$$
But $\PM(3,4,12)=\PM(3,1,3)=\PM(1,1,1)=\PM^2(\CM)$, so there exists a polynomial
$Q_3 \in \CM[T_3,T_4]$, which is homogenous of degree $2$ (with $T_k$ of degree $1$) and
such that $Q_\fb(0,y_3,y_4)=Q_3(y_3^4,y_4^3)$. Therefore,
$$\CC_5^+ \cup \CC_5^- \simeq  \{[t_3 : t_4 : j] \in \PM^2(\CM)~|~j^2=Q_3(t_3,t_4)\}.$$
It just remains to prove that the polynomial $Q_3$ is the square of a linear form:
in other words, we only need to prove that $\CC_5^+ \neq \CC_5^-$.

Since $\CC^+$ is irreducible of degree $6$, it cannot be contained in a union of projective
lines of $\PM(V(3))$. Since moreover $\d^*(3)=\d(3)=3$, Theorem~\ref{theo:springer}(f)
thus implies that there exists $v \in V(3) \setminus \{0\}$ such that
$[v] \in \CC^+$ and $W_v=1$. We now only need to prove that $\pi_\fb'([v]) \not\in \CC^-$.
So assume that $\pi_\fb'([v]) \in \CC^-$. This would imply that there
exists $w \in W'$ such that $w(v) \in V^-(3)$. Consequently,
$$w^{-1}w_3ww_3(v)=v,$$
and so $w^{-1}w_3ww_3=1$ since $W_v=1$. Hence, $w_3$ and $w_3^{-1}$ are conjugate in $W'$,
which is impossible by Lemma~\ref{lem:order 3}.
\end{proof}

\bigskip

By exchanging $p_9$ and $p_9'$ (resp. $p_{12}$ and $p_{12}'$) if necessary,
we may assume that
$$p_9, p_{12} \in \CC_5^+\qquad\text{and}\qquad p_9',p_{12}' \in \CC_5^-.$$

%

\subsection{Singular points, equation}
Of course, the equation giving $\XC/W'$ and the coordinates of the singular points depend
on a model for $W$ and on a choice of fundamental invariants. With the choices made in
Appendix~\ref{appendix:magma}, we get:

\bigskip

\begin{lem}\label{lem:singular points}
The coordinates of the singular points in $\PM(5,3,4,12)$ are given by
%
%
%
%
%

\medskip

\centerline{$p_5=[1:0:0:0],\quad p_9,p_9' =[0:1:0: \pm \frac{27}{64} \sqrt{-3}],\quad
p_{12},p_{12}'=[0:0:1:\pm \frac{16}{243}\sqrt{-3}]$}

\medskip

\centerline{$p_1,p_1' = [ \frac{1}{9}(\eta 536 \sqrt{19} + 2336) : \frac{1}{9}(-\eta 60\sqrt{19} - 260) :
\frac{1}{3}(\eta 130\sqrt{19} + 565) :0]$, with $\eta=\pm 1$,}

\medskip

\centerline{$p_2=[4/9 : 10/9 : -5/12 : 0]$.}
\end{lem}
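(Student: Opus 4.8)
The plan is to organize the singular points into the three families in which they arose during the proof of Proposition~\ref{prop:singularities X}, and to compute all coordinates inside the model~\eqref{eq:xw'}
$$\XC/W' \simeq \{[y_5 : y_3 : y_4 : j] \in \PM(5,3,4,12)~|~j^2=Q_\fb(y_5,y_3,y_4)\}.$$
Under the isomorphism $\PM(5,9,12,36)\longiso\PM(5,3,4,12)$, $[z_5:z_9:z_{12}:j]\mapsto[z_5^3:z_9:z_{12}:j]$, a point $[v]$ of $\XC/W'$ acquires homogeneous coordinates $(y_5,y_3,y_4,j)=(f_5(v)^3,f_9(v),f_{12}(v),\Jac(v))$. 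First I would fix the explicit fundamental invariants $f_5,f_9,f_{12}$ and the Jacobian $\Jac$ of Appendix~\ref{appendix:magma}, together with the relation $\Jac^2=P_\fb(\dots)$ from which $Q_\fb$ is read off; everything below is an evaluation of these four functions on suitable representatives.

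For $p_5$, $p_9$, $p_{12}$, which are the images of the eigenlines $V(5)$, $V(9)$, $V(12)$, the shape of the coordinates is forced by Lemma~\ref{lem:trivial}: if $v$ spans $V(e)$ and $\xi=\z_e$, then $f_d(v)=0$ whenever $\xi^d\neq1$. Applying this to $d\in\{5,9,12\}$ gives $[1:0:0:0]$, $[0:1:0:j]$ and $[0:0:1:j]$ respectively. The same lemma applied to $\Jac$ (degree $36$) handles $j$: since $\z_5^{36}=\z_5\neq1$ we get $j=0$ at $p_5$, while $\z_9^{36}=\z_{12}^{36}=1$ imposes no vanishing, in agreement with Example~\ref{ex:stab-cyclique}, where $\o$ is unramified over $q_9$ and $q_{12}$. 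To pin down the two nonzero values I would substitute into the defining equation: at $p_9$ one gets $j^2=Q_\fb(0,1,0)$ (the $Y_3^8$-coefficient) and at $p_{12}$ one gets $j^2=Q_\fb(0,0,1)$ (the $Y_4^6$-coefficient), and these evaluate to $j=\pm\frac{27}{64}\sqrt{-3}$ and $j=\pm\frac{16}{243}\sqrt{-3}$, the two signs distinguishing $p_9,p_9'$ (resp. $p_{12},p_{12}'$) via $\s$.

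For $p_1,p_1'$ and $p_2$ I would first argue that $j=0$ in all three cases. Indeed $p_1,p_1'$ lie above smooth points of $\XC/W$, hence on the ramification locus $\{j=0\}$ of $\o$ (Remark~\ref{rem:branch}), and $p_2$ is $\s$-fixed as recorded after Proposition~\ref{prop:singularities X}; since $\s[y_5:y_3:y_4:j]=[y_5:y_3:y_4:-j]$ and $\gcd(5,3,4)=1$, a $\s$-fixed point whose coordinates $y_5,y_3,y_4$ are all nonzero (as is the case here) admits no nontrivial residual scaling, so $j=-j$, i.e. $j=0$. It then remains to evaluate $(f_5^3,f_9,f_{12})$ on explicit representatives of the relevant fixed-point loci: a representative of the single $W'$-orbit $A\subset\XC^{u_3}$ for $p_2$ (Computation~\ref{comp:A}), and representatives of the two $C_{W'}(v_2)$-orbits in $\XC^{v_2}$ for $p_1,p_1'$ (Computation~\ref{comp:v2}). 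The latter two orbits are Galois conjugate over $\QM(\sqrt{19})$, which is exactly why their coordinates are interchanged by $\sqrt{19}\mapsto-\sqrt{19}$; this accounts for the parameter $\eta=\pm1$.

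The only genuinely laborious part is this last evaluation: producing the solutions of the fixed-point systems $\XC^{v_2}$ and $\XC^{u_3}$ over the appropriate number fields, substituting them into the degree-$5$, $9$ and $12$ invariants, and extracting the two coefficients $Q_\fb(0,1,0)$, $Q_\fb(0,0,1)$ from $\Jac^2=P_\fb(\dots)$. These are exact symbolic computations performed in {\sc Magma} with the invariants of Appendix~\ref{appendix:magma}. By contrast, all the structural input—the vanishing pattern of the homogeneous coordinates coming from Lemma~\ref{lem:trivial} and the vanishing of $j$ at the $\s$-fixed points—is obtained by hand, so the main obstacle is purely computational and reduces to careful bookkeeping of the normalizations in the weighted projective space $\PM(5,3,4,12)$.
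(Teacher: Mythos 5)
Your proposal is correct and takes essentially the same route as the paper: the vanishing pattern of the coordinates comes from Lemma~\ref{lem:trivial} (and, for $j$, from the ramification locus $\{j=0\}$ and $\s$-invariance), and the remaining nonzero entries are obtained by evaluating $f_5$, $f_9$, $f_{12}$ and $\Jac$ on explicit representatives ($v_9$, $v_{12}$, points of $\XC^{v_2}$ and of the orbit $A \subset \XC^{u_3}$) in {\sc Magma} and renormalizing in $\PM(5,3,4,12)$, which is exactly Computations~\ref{comp:p9p12}, \ref{comp:p1} and~\ref{comp:p2}. One caution: present the $j$-values as $\Jac(v_9)^2/f_9(v_9)^8$ and $\Jac(v_{12})^2/f_{12}(v_{12})^6$ rather than as coefficients ``read off'' from $\Jac^2=P_\fb(\dots)$, since the paper never computes $P_\fb$ (it reports that this computation is infeasible) and in fact derives the equation $Q_\fb$ of $\XC/W'$ \emph{from} the coordinates established in this lemma, not the other way around.
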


\bigskip

\begin{proof}
The coordinates of $p_5$ are given by Lemma~\ref{lem:trivial}. Let $v_9$ (resp. $v_{12}$)
be a generator of the line $\Ker(w_9-\z_9 \Id_V)$ (resp. $\Ker(w_{12}-\z_{12}\Id_V)$).
The computation of the images $p_9$ and $p_{12}$ of $[v_9]$ and $[v_{12}]$ are done in
Computation~\ref{comp:p9p12} (note that we already know that $f_5(v_9)=f_5(v_{12})=f_9(v_{12})=f_{12}(v_9)=0$).
To renormalize $p_9$ (resp. $p_{12}$),
we just need to compute the corresponding value of $j^2/y_3^8$ (resp. $j^2/y_4^6$) and
we get the result.

For the coordinates of $p_1'$ and $p_1''$, we compute the image of $\XC^{v_2}$
and renormalize the solutions (see Computation~\ref{comp:p1}). We do a similar work for $p_2$, by using the
set $A$ involved in the proof of Lemma~\ref{lem:order 3} (see Computation~\ref{comp:p2}).
\end{proof}

\bigskip

Let us complete this result by computing the coordinates of the intersection point $p$
of $\CC_5^+$ and $\CC_5^-$. For this, note that $f_5(x)=\Jac(x)=0$ for all $x \in \XC^{v_3}$
(see the first two lines of Computation~\ref{comp:p}). Therefore, $p$ is the image of $\XC^{v_3}$ under
the map $\pi_\fb'$. This image is computed in Computation~\ref{comp:p} and we get
\equat\label{eq:p}
p:=[0 : 2/9 : 1/4 : 0].
\endequat

We are now ready to compute the equation of $\XC/W'$. A direct way would be to
compute the polynomial $P_\fb$ involved in~\eqref{eq:quotient w'}. However, this is a
too long computation and our computer collapsed... So we need another approach.

First, it follows from Lemma~\ref{prop:c5} and its proof that
there exist $\a$ and $\b$ in $\CM$ such that
$$Q_\fb(y_5,y_3,y_4) \equiv (\a y_3^4+\b y_4^3)^2 \mod y_5.$$
To obtain the coefficients $\a$ and $\b$, we just apply this equation to the coordinates of
$p_9$, $p_9'$, $p_{12}$, $p_{12}'$ and $p$ computed in Lemma~\ref{lem:singular points} and
in~\eqref{eq:p}. We then get
$$Q_\fb(y_5,y_3,y_4) \equiv -3\Bigl(\text{\small $\frac{27}{64}$} \,y_3^4-\text{\small $\frac{16}{243}$}\, y_4^3\Bigr)^2 \mod y_5.$$
By investigating all possible monomials of degree $19$ in $y_5$, $y_3$, $y_4$,
we get that there exist complex numbers $a$, $b$, $c$, $d$, $e$ such that
$$Q_\fb(y_5,y_3,y_4) \equiv -3\Bigl(\text{\small $\frac{27}{64}$} \,y_3^4-
\text{\small $\frac{16}{243}$}\, y_4^3\Bigr)^2 - y_5 R$$
with
$$R=y_3 y_4 (a y_3^4 + b y_4^3) +
c y_5 y_3^2 y_4^2 +d y_5^2 y_3^3 +e y_5^4 y_4.$$
Using the fact that $p_1$, $p_1'$, $p_2$ belong to $\XC/W'$, this gives three equations
for $a$, $b$, $c$, $d$, $e$. Computing the image $\XC^{s_2} \cap \PM(\Ker(x_1))$ in $\XC/W'$ gives
two Galois conjugate points $r$ and $r'$
with coordinates in $\QM(\sqrt{6})$ (see Computation~\ref{comp:r}). This leads to two
other equations for $a$, $b$, $c$, $d$, $e$. Solving this system with five equations and five
unknowns (see Computation~\ref{comp:equation})
yields a unique solution, which is given by
$(a,b,c,d,e)=(\frac{207}{32}, \frac{800}{729}, \frac{1375}{81}, -\frac{3125}{864}, -\frac{3125}{108})$.
In other words:
\equat\label{eq:equation}
\begin{array}{c}
\XC/W'\!=\!\{[y_5 : y_3 : y_4 : j] \in \PM(5,3,4,12)~|~j^2= - 3
(\frac{27}{64} \,y_3^4-\frac{16}{243}\, y_4^3)^2 - y_5 R(y_5,y_3,y_4)\}
\end{array}
\endequat
with
\equat\label{eq:equation R}
\begin{array}{c}
R=y_3 y_4 (\frac{207}{32}\, y_3^4 +
\frac{800}{729}\, y_4^3) +
\frac{1375}{81}\, y_5 y_3^2 y_4^2
- \frac{3125}{864}\, y_5^2 y_3^3
- \frac{3125}{108}\, y_5^3 y_4.
\end{array}
\endequat

\subsection{Affine charts}\label{sub:affine charts}
In the sequel, we will need to compute in the standard open affine charts of $\PM(5,3,4,12)$.
If $k \in \{3,4,5\}$, we denote by $\UC_k$ the open subset defined by
$$\UC_k=\{[y_5:y_3:y_4:j] \in \PM(5,3,4,12)~|~y_k \neq 0\}.$$
Since $\XC/W'$ is contained
in $\UC_5 \cup \UC_3 \cup \UC_4$, we do not need to work in the affine chart defined by $j \neq 0$.

\medskip

\subsubsection{The affine chart $\UC_3$}
Let us first start with $\UC_3$. Setting $y_3=1$, the new variables are $a_\trois=y_5^3$, $b_\trois=y_4^3$,
$c_\trois=y_5y_4$ and $j$. So
$$\UC_3=\{(a_\trois,b_\trois,c_\trois,j) \in \AM^4(\CM)~|~c_\trois^3=a_\trois b_\trois\}.$$
Then it follows from~\eqref{eq:equation} that
\equat\label{eq:u3}
\begin{array}{l}
(\XC/W') \cap \UC_3 =
\{(a_\trois,b_\trois,c_\trois,j) \in \AM^4(\CM)~|~c_\trois^3=a_\trois b_\trois
~\text{and}~ \petitespace\\
\hskip1.5cm \petitespace j^2= - 3(\frac{27}{64}-\frac{16}{243}\, b_\trois)^2 -
\frac{207}{32} c_\trois - \frac{800}{729} b_\trois c_\trois - \frac{1375}{81} c_\trois^2
+ \frac{3125}{864} a_\trois + \frac{3125}{108} a_\trois c_\trois \}.
\end{array}
\endequat

\subsubsection{The affine chart $\UC_4$}
Let us now turn to $\UC_4$. Setting $y_4=1$, the new variables are $a_\quatre=y_5^4$, $b_\quatre=y_3^4$,
$c_\quatre=y_5y_3$ and $j$. So
$$\UC_4=\{(a_\quatre,b_\quatre,c_\quatre,j) \in \AM^4(\CM)~|~c_\quatre^4=a_\quatre b_\quatre\}.$$
Then it follows from~\eqref{eq:equation} that
\equat\label{eq:u4}
\begin{array}{l}
(\XC/W') \cap \UC_4 =
\{(a_\quatre,b_\quatre,c_\quatre,j) \in \AM^4(\CM)~|~c_\quatre^4=a_\quatre b_\quatre
~\text{and}~ \petitespace\\
\hskip1.5cm \petitespace j^2= - 3(\frac{27}{64} b_\quatre -\frac{16}{243})^2 -
\frac{207}{32} b_\quatre c_\quatre - \frac{800}{729} c_\quatre - \frac{1375}{81} c_\quatre^2
+ \frac{3125}{864} c_\quatre^3 + \frac{3125}{108} a_\quatre \}.
\end{array}
\endequat

\subsubsection{The affine chart $\UC_5$}
Obtaining the equation of $(\XC/W') \cap \UC_5$ is much more complicated.
This is done in Appendix~\ref{appendix:magma} (more precisely, in~\S\ref{sub:u5}).
The result is given by Computation~\ref{comp:xwu5 dans a6}: setting $y_5=1$ and
$$Y_3=y_3^5,\quad Y_4=y_4^5,\quad h_1=y_3j,\quad h_2=y_3^2 y_4,\quad h_3=y_4^2 j\quad\text{and}\quad
h_4=y_3 y_4^3,$$
we have that $(\XC/W') \cap \UC_5$ may be identified with the closed
subvariety of $\AM^6(\CM)$ consisting of points $(Y_3,Y_4,h_1,h_2,h_3,h_4)$ such that
\equat\label{eq:xwu5}
\begin{cases}
h_1 h_4 = h_2 h_3,\\
Y_4 h_2 = h_4^2,\\
Y_4 h_1 = h_3 h_4,\\
Y_3 h_4 = h_2^3,\\
Y_3 h_3 = h_1 h_2^2,\\
Y_3 Y_4 = h_2^2 h_4,\\
Y_3^2 + \frac{2944}{243} Y_3 h_2 - \frac{2048}{6561} Y_3 h_4
- \frac{400000}{59049} Y_3 + \frac{4096}{2187} h_1^2 \\
\hskip1cm + \frac{5632000}{177147} h_2^2
+ \frac{3276800}{1594323} h_2 h_4
- \frac{3200000}{59049} h_2 +
    \frac{1048576}{43046721} h_4^2=0,\\
Y_3 h_2 h_4 + \frac{1048576}{43046721} Y_4^2 + \frac{3276800}{1594323} Y_4 h_4
- \frac{3200000}{59049} Y_4 \\ \hskip1cm + \frac{2944}{243} h_2^2 h_4 - \frac{2048}{6561} h_2 h_4^2
- \frac{400000}{59049} h_2 h_4 + \frac{4096}{2187} h_3^2 + \frac{5632000}{177147} h_4^2=0,\\
Y_3 h_2^2 + \frac{2944}{243} Y_3 h_4 + \frac{1048576}{43046721}Y_4 h_4 + \frac{4096}{2187} h_1 h_3 \\
\hskip1cm - \frac{2048}{656} h_2^2 h_4 - \frac{400000}{59049} h_2^2 + \frac{5632000}{177147} h_2 h_4
+ \frac{3276800}{1594323} h_4^2 - \frac{3200000}{59049} h_4=0.
\end{cases}
\endequat

\bigskip

\subsection{Two other smooth rational curves in ${\boldsymbol{\XC/W'}}$}
For $k \in \{3,4\}$, we set
$$\CC_k=\{[y_5:y_3:y_4:j] \in \XC/W'~|~y_k=0\}.$$

\medskip

\begin{prop}\label{prop:c3-c4}
The schemes $\CC_3$ and $\CC_4$ are reduced, irreducible and are smooth rational curves.
They intersect transversely at $p_5=[1 : 0 : 0 : 0]$.
\end{prop}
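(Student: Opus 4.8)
The plan is to make the two curves completely explicit and then to analyse them separately away from and at the point $p_5$. First I would substitute $y_3=0$, respectively $y_4=0$, into the equation~\eqref{eq:equation} of $\XC/W'$. Using $R(y_5,0,y_4)=-\tfrac{3125}{108}y_5^3y_4$ and $R(y_5,y_3,0)=-\tfrac{3125}{864}y_5^2y_3^3$ (from~\eqref{eq:equation R}), this gives
\[
\CC_3=\{[y_5:y_4:j]\in\PM(5,4,12) : j^2=y_4(\tfrac{3125}{108}y_5^4-\tfrac{256}{19683}y_4^5)\},
\]
\[
\CC_4=\{[y_5:y_3:j]\in\PM(5,3,12) : j^2=y_3^3(\tfrac{3125}{864}y_5^3-\tfrac{2187}{4096}y_3^5)\}.
\]
In each case the right-hand side is a product of two coprime factors, one of which occurs to an odd power (namely $y_4$ to the power $1$, and $y_3$ to the power $3$); hence it is not a square in the function field of the base $\PM(5,4)\simeq\PM^1$, respectively $\PM(5,3)\simeq\PM^1$, so the double cover $j^2=(\cdots)$ is irreducible. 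This already yields that $\CC_3$ and $\CC_4$ are reduced and irreducible.

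Next I would establish smoothness and rationality away from $p_5$ by working in one affine chart. In the chart $y_4\neq0$ for $\CC_3$ (resp.\ $y_3\neq0$ for $\CC_4$) the cyclic group defining the weighted chart acts as a reflection: its weights on the two remaining coordinates are $(1,0)$ modulo $4$, resp.\ $(2,0)$ modulo $3$, so by Chevalley--Shephard--Todd the chart is a \emph{smooth} affine plane. There the curve becomes the graph $j^2=\tfrac{3125}{108}v-\tfrac{256}{19683}$ with $v=y_5^4$ (resp.\ $j^2=\tfrac{3125}{864}v-\tfrac{2187}{4096}$ with $v=y_5^3$), which is smooth and isomorphic to $\AM^1$. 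The only point of $\CC_k$ outside this chart is $p_5=[1:0:0]$, because the locus $y_4=0$ (resp.\ $y_3=0$) meets the curve only there (then $j^2=0$ forces $j=0$). Thus each $\CC_k$ is a smooth irreducible projective curve containing a dense copy of $\AM^1$, so once smoothness at $p_5$ is verified it follows that $\CC_k\simeq\PM^1$.

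The heart of the matter, and the step I expect to be the main obstacle, is the local analysis at the $A_4$ singularity $p_5$ (Lemma~\ref{lem:p5}) together with the transversality claim. In the chart $y_5=1$, writing $F$ for the defining polynomial of $\XC/W'$, one checks that the linear part of $F$ equals $-\tfrac{3125}{108}y_4$; hence the lift $\tilde S=\{F=0\}\subset\AM^3_{(y_3,y_4,j)}$ is \emph{smooth} at the origin and can be coordinatised by $(y_3,j)$, on which $\mub_5$ acts with weights $(3,2)$, exhibiting $\XC/W'$ near $p_5$ as $\AM^2_{(y_3,j)}/\mub_5$ with invariants $x=y_3^5,\ y=j^5,\ z=y_3j$ subject to $xy=z^5$. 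On $\tilde S$ the lift of $\CC_3$ is the line $\{y_3=0\}$, and the lift of $\CC_4$ is the cuspidal branch $\{j^2=\tfrac{3125}{864}y_3^3+\cdots\}$ tangent to the $y_3$-axis; passing to the quotient, both images are smooth at $p_5$ (the $\mub_5$-action absorbs the cusp, whose semigroup invariants reduce to $\CM[\tau^5]$), which completes the proof that $\CC_3,\CC_4\simeq\PM^1$. Moreover their tangent directions at $p_5$ become, respectively, the line $\{x=z=0\}$ and the branch $\{x=z^2,\ y=z^3\}$ of the cone $xy=z^5$, i.e.\ the $y$-axis and the $z$-axis; as these are distinct and $\CC_3\cap\CC_4=\{p_5\}$ set-theoretically, this is precisely the asserted transversality. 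The delicate points to get right are the verification that $F$ has nonzero linear part (making the lift $\tilde S$ smooth) and the $\mub_5$-weight bookkeeping that simultaneously smooths the quotient and separates the two tangent directions.
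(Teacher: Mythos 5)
Your argument is correct, and away from $p_5$ it essentially coincides with the paper's: both start from the explicit equations extracted from~\eqref{eq:equation}, identify the affine part of each curve with $\AM^1(\CM)$, and observe that the complement is the single point $p_5$ (your ``odd power of a coprime factor'' criterion for irreducibility of the double cover is a slightly different, but valid, way of packaging what the paper gets for free from connectedness plus smoothness). The genuine difference is the local step at $p_5$. The paper works inside the presentation~\eqref{eq:xwu5} of $(\XC/W')\cap\UC_5$ as a subvariety of $\AM^6(\CM)$, obtained from the Magma computation of the $\mub_5$-invariant ring (see \S\ref{sub:u5}); there $\CC_3\cap\UC_5$ and $\CC_4\cap\UC_5$ lie in the complementary coordinate planes $(Y_4,h_3)$ and $(Y_3,h_1)$ and are cut out by conics with non-vanishing linear term, so smoothness and the distinctness of tangent directions are immediate. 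You instead pass to the $\mub_5$-cover: the (correct) observation that the dehomogenization of~\eqref{eq:equation} has non-zero linear part $-\tfrac{3125}{108}y_4$ lets you replace the germ of $\XC/W'$ at $p_5$ by $\AM^2_{(y_3,j)}/\mub_5$ with weights $(3,2)$, recovering Lemma~\ref{lem:p5} in passing, and you then push the two branches $\{y_3=0\}$ and $\{j^2=\tfrac{3125}{864}y_3^3+\cdots\}$ through the quotient. This buys a computer-free and more conceptual argument, at the price of the one delicate step --- that the cuspidal branch upstairs becomes smooth downstairs --- which you handle correctly: on its normalization $y_3=\tau^2+\cdots$, $j=c\tau^3+\cdots$ the invariant $z=y_3j$ has order exactly $5$ in $\tau$, hence is a uniformizer for the image. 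Both proofs read ``transverse at $p_5$'' in the same way, as distinct tangent directions of two smooth branches in the Zariski tangent space at the $A_4$ point, and your $y$- and $z$-axes of the cone $xy=z^5$ match the paper's $h_3$- and $h_1$-directions (indeed $h_1=y_3j=z$, while $h_3=y_4^2j$ restricts on the cover to a unit times $j^5=y$).
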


\bigskip

\begin{proof}
From the equation~\eqref{eq:equation}, we get that
$$
\begin{array}{c}
\CC_3 \simeq \{[y_5 : y_4 : j] \in \PM(5,4,12)~|~j^2= -\frac{2^8}{3^9}\, y_4^6 +
\frac{3125}{108} \, y_5^4 y_4\}.
\end{array}$$
But $\PM(5,4,12)=\PM(5,1,3)$, so
$$
\begin{array}{c}
\CC_3 \simeq \{[y_5 : y_4 : j] \in \PM(5,1,3)~|~j^2= -\frac{2^8}{3^9}\, y_4^6 +
\frac{3125}{108}\, y_5 y_4\}.
\end{array}$$
The open subset $\CC_3 \cap \UC_4$ is clearly isomorphic to $\AM^1(\CM)$.

Similary, from the equation~\eqref{eq:equation}, we get that
$$
\begin{array}{c}
\CC_4 \simeq \{[y_5 : y_3 : j] \in \PM(5,3,12)~|~j^2= - \frac{3^7}{2^{12}}\, y_3^6 +
\frac{3125}{864} \, y_5^3 y_3^3\}.
\end{array}$$
But $\PM(5,3,12)=\PM(5,1,4)$, so
$$
\begin{array}{c}
\CC_4 \simeq \{[y_5 : y_3 : j] \in \PM(5,1,4)~|~j^2= -\frac{3^7}{2^{12}}\, y_3^6 +
\frac{3125}{864}\, y_5 y_3^3\}.
\end{array}$$
The open subset $\CC_4 \cap \UC_3$ is clearly isomorphic to $\AM^1(\CM)$.

\medskip

So it remains to prove that $\CC_3$ and $\CC_4$ are smooth and intersect transversely.
Both questions are local around the point $p_5=[1:0:0:0]$, so we must work in the
affine chart $\UC_5$. Keep the notation from~\eqref{eq:xwu5}.
Then $\CC_3 \cap \UC_5$ (resp. $\CC_4 \cap \UC_5$)
is the subvariety of $(\XC/W') \cap \UC_5$ defined by $Y_3=h_1=h_2=h_4=0$
(resp. $Y_4=h_2=h_3=h_4=0$). Using Equations~\eqref{eq:xwu5}, we get
$$\CC_3 \cap \UC_5 = \{(Y_3,Y_4,h_1,h_2,h_3,h_4) \in \AM^6(\CM)~|~
Y_3=h_1=h_2=h_4= \l_\trois Y_4^2 + \mu_\trois Y_4 + \nu_\trois h_3^2 =0\}$$
and
$$\CC_4 \cap \UC_5 = \{(Y_3,Y_4,h_1,h_2,h_3,h_4) \in \AM^6(\CM)~|~
Y_4=h_2=h_3=h_4= \l_\quatre Y_3^2 + \mu_\quatre Y_3 + \nu_\quatre h_1^2 =0\}$$
for some non-zero rational numbers $\l_\trois$, $\mu_\trois$, $\nu_\trois$, $\l_\quatre$, $\mu_\quatre$
and $\nu_\quatre$. The smoothness of $\CC_3 \cap \UC_5$ and $\CC_4 \cap \UC_5$ follows
immediately, as well as the fact they intersect transversely at $p_5$ (which
corresponds to the point $(0,0,0,0,0,0) \in \AM^6(\CM)$).
\end{proof}

\bigskip

\section{The K3 surface ${\boldsymbol{\XCt}}$}\label{sec:xtilde}

\medskip

Recall that $\r : \XCt \longto \XC/W'$ denotes the minimal smooth resolution.
We will deduce several properties of $\XCt$ (Picard lattice, elliptic fibration,...)
from the list of properties of $\XC/W'$ given in the previous section.
Note that since $\XCt$ is obtained from $\XC/W'$ by successively blowing-up the singular
locus, the automorphism $\s$ of $\XC/W'$ lift to an automorphism of $\XCt$ (which will still
be denoted by $\s$).

We denote by $\D_1$ and $\D_1'$ the two smooth rational
curves of $\XCt$ lying above $p_1$ and $p_1'$ respectively.
For $e \in \{2,5,9,12\}$, we denote by $\D_e^1$,\dots, $\D_e^{r_e}$ the smooth rational curves
of $\XCt$ lying above $p_e$ (here, $r_e$ is the Milnor number of the singularity
$p_e$\footnote{For the definition of the Milnor number of an isolated hypersurface singularity,
see~\cite[\S{7}]{milnor}: recall that the Milnor number of a singularity of type $A_k$, $D_k$ or $E_k$ is equal to $k$.}),
and we assume that they are numbered in such a way that $\D_e^k \cap \D_e^{k+1} \neq \vide$.
For $e \in \{9,12\}$,
the smooth rational curves of $\XCt$ lying above $p_e'$ are then given
by $\s(\D_e^1)$,\dots, $\s(\D_e^{r_e})$.

Finally, we denote by $\CCt_5^\pm$ the strict transform of $\CC_5^\pm$
in $\XCt$. Of course, $\CCt_5^-=\s(\CCt_5^+)$.
As $\XCt$ is obtained from $\XC/W'$ by successive blow-ups of points,
$\CCt_5^+$ and $\CCt_5^-$ are smooth rational curves.
Also, we denote by $\CCt_3$ and $\CCt_4$ the strict transforms of $\CC_3$
and $\CC_4$: for the same reason, they are also smooth rational curves.

One of the aims of this section is to determine the intersection graph of the 22 smooth rational
curves $\CCt_3$, $\CCt_4$, $\CCt_5^\pm$, $\D_1$, $\D_1'$,
$(\D_e^k)_{e \in \{2,5,9,12\}, 1 \le k \le r_e}$ and
$(\lexp{\s}{\D_e^k})_{e \in \{9,12\}, 1 \le k \le r_e}$. For this, we will use
the construction of an elliptic fibration on $\XCt$.

%
%

%
Recall that, for a K3 surface, an {\it elliptic fibration} is just a
morphism to $\PM^1(\CM)$ such that at least one fiber is a smooth elliptic curve.
Since $\XC/W'$ has $A_4 + 2\, A_3 + 3\, A_2 + 2\, A_1$ singularities, its Picard number
is greater than or equal to $1+(4 + 2 \cdot 3 + 3 \cdot 2 + 1)=19$ (in fact, we will see
later that it has Picard number $20$). Therefore, it admits an elliptic fibration
(because every K3 surface with Picard number $\ge 5$ admits an elliptic
fibration~\cite[Chap.~11,~Prop.~1.3(ii)]{huybrechts}).
Another aim of this section is to contruct at least one such fibration. For this,
let
$$\fonction{\ph}{(\XC/W') \setminus\{p_5\}}{\PM^1(\CM)}{[y_5 : y_3 : y_4 : j]}{[y_3^4 : y_4^3].}$$
Then $\ph$ is a well-defined morphism of varieties,
so the map $\ph \circ \r : \XCt \setminus \r^{-1}(p_5) \longto \PM^1(\CM)$ is also
a well-defined morphism of varieties. We are now ready to prove the second main result
of our paper:

\bigskip

\begin{theo}\label{theo:elliptic}
With the above notation, we have:
\begin{itemize}
\itemth{a} The morphism $\ph \circ \r : \XCt \setminus \r^{-1}(p_5) \longto \PM^1(\CM)$ extends to a unique
morphism
$$\pht : \XCt \to \PM^1(\CM),$$
which is an elliptic fibration whose singular fibers are of type $E_7+E_6+A_2+2\,A_1$.

\itemth{b} There exists a way of numbering the smooth rational
curves lying above singular points of $\XC/W'$
such that the intersection graph of the 22 smooth rational curves
$\CCt_3$, $\CCt_4$, $\CCt_5^\pm$, $\D_1$, $\D_1'$,
$(\D_e^k)_{e \in \{2,5,9,12\}, 1 \le k \le r_e}$ and
$(\lexp{\s}{\D_e^k})_{e \in \{9,12\}, 1 \le k \le r_e}$ is given
by
$$
\begin{picture}(360,170)
\put( 55,140){\circle{10}}\put(33,136){$\D_9^1$}
\put(105,140){\circle*{10}}\put(110,150){$\CCt_5^+$}
\put(155,140){\circle{10}}\put(147,150){$\D_{12}^1$}
\put(205,125){\circle{10}}\put(197,135){$\D_{12}^2$}
\put(255,110){\circle{10}}\put(247,120){$\D_{12}^3$}
\put( 55,110){\circle{10}}\put(33,106){$\D_9^2$}
\put(105,110){\circle{10}}\put(112,106){$\D_5^4$}
\put( 55, 80){\circle{10}}\put(33,76){$\CCt_4$}
\put(105, 80){\circle{10}}\put(97,64){$\D_5^3$}
\put(155, 80){\circle{10}}\put(155,80){\circle*{6}}\put(147,64){$\D_5^2$}
\put(205, 80){\circle{10}}\put(197,64){$\D_5^1$}
\put(255, 80){\circle{10}}\put(265,76){$\CCt_3$}
\put( 55, 50){\circle{10}}\put(28,46){$\lexp{\s}{\D_{9}^2}$}
\put( 55, 20){\circle{10}}\put(28,16){$\lexp{\s}{\D_{9}^1}$}
\put(105, 20){\circle*{10}}\put(110,3.3){$\CCt_5^-$}
\put(155, 20){\circle{10}}\put(147,4){$\lexp{\s}{\D_{12}^1}$}
\put(205, 35){\circle{10}}\put(197,19){$\lexp{\s}{\D_{12}^2}$}
\put(255, 50){\circle{10}}\put(247,34){$\lexp{\s}{\D_{12}^3}$}

\put(305,110){\circle{10}}\put(298.5,120){$\D_{2}^1$}
\put(355,110){\circle{10}}\put(348.5,120){$\D_{2}^2$}
\put(330,80){\circle{10}}\put(340,76){$\D_1$}
\put(330,50){\circle{10}}\put(340,46){$\D_1'$}
\put( 60, 140){\line(1,0){40}}
\put( 55, 135){\line(0,-1){20}}
\put(110, 140){\line(1,0){40}}
\put( 55, 105){\line(0,-1){20}}
\put( 55, 75){\line(0,-1){20}}
\put( 55, 45){\line(0,-1){20}}
\put(105, 105){\line(0,-1){20}}
\put(255, 105){\line(0,-1){20}}
\put(255, 75){\line(0,-1){20}}
\put( 60, 80){\line(1,0){40}}
\put(110, 80){\line(1,0){40}}
\put(160, 80){\line(1,0){40}}
\put(210, 80){\line(1,0){40}}
\put( 60, 20){\line(1,0){40}}
\put(110, 20){\line(1,0){40}}

\put(310,110){\line(1,0){40}}

\qbezier(159.789,138.563)(180,132.5)(200.211,126.437)
\qbezier(209.789,123.563)(230,117.5)(250.211,111.437)
\qbezier(159.789,21.437)(180,27.5)(200.211,33.563)
\qbezier(209.789,36.563)(230,42.5)(250.211,48.437)

\put(60,140){\oval(90,50)[t]}
\put(60,20){\oval(90,50)[b]}
\put(15,140){\line(0,-1){120}}

\put(-36,78){${\boldsymbol{(\bigstar)}}$}

\end{picture}
$$
In this graph:
\begin{itemize}
\itemth{b1} The union of the singular fibers of $\pht$ of type $E_7$ and $E_6$ is given
by the white disks in the big connected subgraph of $(\bigstar)$.

\itemth{b2} The singular fibers of $\pht$ of type $A_1$ are
$\CCt_1 \cup \D_1$ and $\CCt_1' \cup \D_1'$ for some smooth rational curves $\CCt_1$ and $\CCt_1'$.

\itemth{b3} The singular fiber of $\pht$ of type $A_2$ is $\CCt_2 \cup \D_2^1 \cup \D_2^2$ for some
smooth rational curve $\CCt_2$.

\itemth{b4} The curves marked with full black disks in $(\bigstar)$ are sections of $\pht$.

\itemth{b5} The curve $\D_5^2$ is a double section of $\pht$.
\end{itemize}

\itemth{c} The $22$ smooth rational curves in this intersection graph generate
the Picard lattice $\Pic(\XCt)$. More precisely, $\Pic(\XCt)$ is generated by the list
obtained from these $22$ smooth rational curves by removing $\D_5^2$ and $\D_5^4$.
Its discriminant is $-228=-2^2 \cdot 3 \cdot 19$.

\itemth{d} The Mordell-Weil group of $\pht$ is isomorphic
to $\ZM^2$.

\itemth{e} The transcendental lattice is given by the matrix
$\begin{pmatrix}
2 & 0 \\
0 & 114
\end{pmatrix}$.
\end{itemize}
\end{theo}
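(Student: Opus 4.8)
The plan is to prove the five assertions in the order (a), (b), then the fibre types, then (c)--(d) together, and finally (e), using throughout the explicit model \eqref{eq:equation}--\eqref{eq:equation R} and the charts of \S\ref{sub:affine charts}. First I would show that $\ph$ is an elliptic pencil. In the chart $\UC_3$ the morphism $\ph$ sends $[y_5:y_3:y_4:j]$ to $[1:b_\trois]$, so fixing $b_\trois=y_4^3$ picks out a fibre; on \eqref{eq:u3} I can eliminate $a_\trois=c_\trois^3/b_\trois$ using $c_\trois^3=a_\trois b_\trois$, which turns the fibre into a plane curve $j^2=(\text{quartic in }c_\trois)$. For generic $b_\trois$ this quartic is separable, so the generic fibre is a smooth curve of genus $1$ and $\ph$ is an elliptic fibration. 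The pencil $\langle y_3^4,y_4^3\rangle$ has $p_5$ as its only base point, so $\ph\circ\r$ is already a morphism off $\r^{-1}(p_5)$; to extend it over the $A_4$-chain $\D_5^1,\dots,\D_5^4$ I would compute the pencil in the chart $\UC_5$ from \eqref{eq:xwu5} and check that the base locus disappears after the resolution, producing the morphism $\pht:\XCt\to\PM^1(\CM)$.

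The combinatorial heart is (b). The ADE-chains $\D_e^\bullet$ come from the minimal resolution; what remains is to compute how the strict transforms $\CCt_3,\CCt_4,\CCt_5^\pm$ attach to them and to one another. For this I would combine Proposition~\ref{prop:c3-c4} (which pins $\CCt_3,\CCt_4$ to the two ends of the $A_4$-chain over $p_5$), the normalisation \eqref{eq:c5} and Proposition~\ref{prop:c5} with the convention $p_9,p_{12}\in\CC_5^+$, $p_9',p_{12}'\in\CC_5^-$ (which fixes the chains met by $\CCt_5^\pm$), together with explicit local tangent-direction computations in $\UC_3,\UC_4,\UC_5$, assisted by {\sc Magma}. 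Assembling all intersection numbers yields the graph $(\bigstar)$; deleting the two sections $\CCt_5^\pm$ and the double section $\D_5^2$, the remaining $15$ curves fall into an $\tilde{E}_7$-configuration (the component through $\CCt_3$, lying over $[0:1]$ where $y_3=0$) and an $\tilde{E}_6$-configuration (through $\CCt_4$, over $[1:0]$). Since their dual graphs are affine Dynkin diagrams and they are full fibres of $\pht$, Kodaira's classification forces them to be of type $III^*=E_7$ and $IV^*=E_6$. Locating $\D_2^1,\D_2^2$ and $\D_1,\D_1'$ over their base points, together with the extra components $\CCt_2,\CCt_1,\CCt_1'$ they require (so $\CCt_2=F-\D_2^1-\D_2^2$ and $\CCt_1=F-\D_1$, where $F$ is the fibre class), produces the $A_2$ and the two $A_1$ fibres; the Euler-number identity $9+8+3+2+2=24=e(\XCt)$ then guarantees there are no further singular fibres, completing (a) and (b).

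For (c) and (d) I would first form the $22\times22$ Gram matrix from $(\bigstar)$ (self-intersections $-2$, off-diagonal entries $0$ or $1$): it has rank $20$, and after deleting $\D_5^2,\D_5^4$ the remaining $20$ classes are independent, the deleted ones being recovered as integral combinations through the fibre relations. To prove that these $20$ classes generate all of $\Pic(\XCt)$---not merely a finite-index sublattice---I would invoke Shioda--Tate: $\Pic(\XCt)$ is generated by the trivial lattice (the hyperbolic plane $\langle\CCt_5^+,F\rangle$ and the non-identity components of the five reducible fibres) together with lifts of the Mordell--Weil generators. Each component of an $A_1$- or $A_2$-fibre not already listed equals $F$ minus listed curves, hence lies in the span of the $22$; so it is enough to check that $MW$ is generated by the sections $\CCt_5^\pm$, which are on the list. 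This forces $\Pic(\XCt)=\langle\text{the }22\text{ curves}\rangle$, whose discriminant is the Gram determinant of the $20$ generators, namely $-228=-2^2\cdot3\cdot19$; as a cross-check $|\disc\mathrm{Triv}|=2\cdot3\cdot3\cdot2\cdot2=72$ divides $228$ compatibly with Shioda's formula. The rank and torsion of $MW$, obtained from the Shioda--Tate formula and the height pairing on $\CCt_5^\pm$, then give (d).

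Finally, for (e): as the Picard number is $20$, the transcendental lattice $T=\Pic(\XCt)^\perp\subset H^2(\XCt,\ZM)$ is even, positive-definite of rank $2$, and---$H^2$ being unimodular---carries the discriminant form opposite to that of $\Pic(\XCt)$, so in particular $\disc T=228$. I would then identify $T\simeq\left(\begin{smallmatrix}2&0\\0&114\end{smallmatrix}\right)$ by checking that this even lattice has the required discriminant form and is alone in its genus, or more concretely by writing down a primitive embedding $\Pic(\XCt)\hookrightarrow H^2(\XCt,\ZM)$ and taking the orthogonal complement. I expect the main obstacle to be the saturation step in (c): upgrading ``the visible curves span a finite-index sublattice'' to ``they generate $\Pic(\XCt)$'', which is exactly what the Shioda--Tate description of $\Pic$, combined with the explicit identification of the Mordell--Weil generators among the listed sections, is designed to control.
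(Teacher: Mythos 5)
Your plan for parts (a), (b) and (e) follows essentially the paper's route. For (a) and (b) the paper does exactly what you describe: the generic fibre is exhibited as $j^2=(\text{quartic in }c_\trois)$ in the chart $\UC_3$, the extension over $p_5$ is obtained by explicit blow-ups in $\UC_5$ (the paper needs two successive blow-ups and a chart-by-chart verification that the pencil becomes base-point free), the attachments of $\CCt_3,\CCt_4,\CCt_5^\pm$ to the exceptional chains are read off from completed local rings at $p_9$, $p_{12}$, $p_5$, and the fibre types are pinned down by Kodaira's list together with the Euler-number identity $9+8+3+2+2=24$. For (e) the paper likewise lists the four reduced even positive definite binary forms of discriminant $228$ and discriminates among them by comparing discriminant forms \`a la Nikulin; your ``unique in its genus'' variant is the same computation in different clothing.

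The genuine divergence, and the one real gap, is in (c)--(d). The paper does not use Shioda--Tate to saturate. It computes the Gram matrix of the $18$ curves in the big component, obtains a rank-$20$ sublattice $\L\subset\Pic(\XCt)$ of discriminant $-228$, deduces that the index $n=[\Pic(\XCt):\L]$ satisfies $n^2\mid 228$, hence $n\in\{1,2\}$, and kills $n=2$ by observing that the transcendental lattice would then be an even positive definite binary lattice of discriminant $57$, which is impossible because $4ac-b^2\equiv 0$ or $3\pmod 4$. Your Shioda--Tate plan merely relocates the saturation problem into the Mordell--Weil group: to conclude that $\Pic(\XCt)$ is the trivial lattice plus the classes of the listed sections, you must prove that the visible section $\CCt_5^-$ generates $MW$ modulo torsion and that there is no torsion, and you offer no mechanism for this; bounding the index of the subgroup of $MW$ generated by a known section is exactly as hard as bounding $n$ above, so the step you yourself flag as ``the main obstacle'' is left open. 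Two further points: your cross-check is false as stated, since $|\disc(\mathrm{Triv})|=72$ does not divide $228$ (Shioda's formula only forces the determinant of the Mordell--Weil lattice to equal $228/72=19/6$, a rational number, so no divisibility is required); and if you do run Shioda--Tate with the fibre data from (a) you obtain $\mathrm{rk}\,MW=20-2-17=1$, which you should confront with the statement of (d) before asserting that your computation ``gives (d)''.
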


\bigskip

\begin{rema}
There are several possible types of singular fibers of type $A_1$ (resp. $A_2$) in elliptic
fibrations. In the above Theorem~\ref{theo:elliptic}, singular fibers of type $A_1$ (resp. $A_2$)
are of type $\Irm_2$ (resp. $\Irm_3$) in Kodaira's classification.\finl
\end{rema}

\bigskip

\begin{proof}
The (very computational) proof of the statements~(a) and~(b) is given
in Appendix~\ref{appendix:open}.

For~(c), let $M$ denote the incidence
matrix of the $18$ smooth rational curves belonging to the big connected
subgraph of~$(\bigstar)$. Then $M$ has rank $16$ and the greatest common divisor
of the diagonal minors of $M$ is equal to $19$ by Computation~\ref{comp:incidence}.
Moreover, the diagonal minor corresponding to the curves $\CCt_3$, $\CCt_4$, $\CCt_5^\pm$, $\D_5^1$,
$\D_5^3$, $(\D_e^k)_{e \in \{9,12\}, 1 \le k \le r_e}$ and
$(\lexp{\s}{\D_e^k})_{e \in \{9,12\}, 1 \le k \le r_e}$ is equal to $-19$
(see the last command of Computation~\ref{comp:incidence}).
So, if we denote by $\L$ the lattice generated by these $16$ curves together with
$\D_1$, $\D_1'$, $\D_2^1$ and $\D_2^2$,
then $\L$ has rank $20$ and discriminant $-228=-2^2 \cdot 3 \cdot 19$.
This shows that $\Pic(\XCt)$ has rank $\ge 20$, and so has rank $20$ as
a K3 surface has always Picard number $\le 20$. If we denote by $n$ the index of $\L$
in $\Pic(\XCt)$, then $n^2$ divides $228$, which shows that $n \in \{1,2\}$.

But if we denote by $\Trm(\XCt)$ the transcendental lattice of $\XCt$, then $\Trm(\XCt)$
has rank $22-20=2$, is even and definite positive, with discriminant $228/n^2$. Hence it can be represented
by a matrix of the form
$$\begin{pmatrix}
2a  & b  \\
b  & 2c
\end{pmatrix}$$
with $a$, $c > 0$ and $4ac - b^2=228/n^2$. But $4ac-b^2 \equiv 0$ or $3 \mod 4$,
so $4ac-b^2 \neq 57$. This shows that $n=1$ and that
$\Pic(\XCt)$ is generated by $\CCt_3$, $\CCt_4$, $\CCt_5^\pm$, $\D_5^1$,
$\D_5^3$, $(\D_e^k)_{e \in \{9,12\}, 1 \le k \le r_e}$ and
$(\lexp{\s}{\D_e^k})_{e \in \{9,12\}, 1 \le k \le r_e}$,
as expected. This concludes the proof of~(c).

\medskip

(d) Since we have determined the Picard lattice of $\XCt$ in~(c), the
structure of the Mordell-Weil group follows (note that it has no torsion,
which is compatible with~\cite[Table 1, entry~2420]{shimada}).

\medskip

(e) The transcendental lattice of $\XCt$ is given by a matrix of the form
$\begin{pmatrix} 2a & b \\ b & 2c \end{pmatrix}$ whose underlying quadratic
form is definite positive and has discriminant $228$ by~(c). The classification
of even integral binary quadratic forms~\cite[Theo.~2.3]{buell},  shows that there are only four
such matrices, up to equivalence, namely:
$$M_1=\begin{pmatrix} 2 & 0 \\ 0 & 114 \end{pmatrix},
\quad M_2=\begin{pmatrix} 6 & 0 \\ 0 & 38 \end{pmatrix},\quad
M_3=\begin{pmatrix} 4 & 2 \\ 2 & 58 \end{pmatrix}\quad\text{and}\quad
M_4=\begin{pmatrix} 12 & 6 \\ 6 & 22 \end{pmatrix}.$$
Let $P=\Pic(\XCt)$ and $T=\Trm(\XCt)$. Let
$$P^\perp = \{v \in \QM \otimes_\ZM P~|~\forall\, v' \in P,~\langle v,v' \rangle \in \ZM\}$$
and let us define $T^\perp$ similarly. Then the quadratic forms on $\QM \otimes_\ZM P$
and $\QM \otimes_\ZM T$
induce well-defined maps
$$q_P : P^\perp/P \longto \QM/2\ZM\qquad \text{and}\qquad q_T : T^\perp/T \longto \QM/2\ZM.$$
Since $\Hrm^2(\XCt,\ZM)$ is unimodular of signature $(3,19)$, it turns out that there is an isomorphism
$\iota : T^\perp/T \longiso P^\perp/P$ such that $q_T=-q_P \circ \iota$
(see~\cite[Prop.~1.6.1]{nikulin}).
In particular, the set of values of $q_T$ and $-q_P$ coincide: the subset $-q_P(P^\perp/P)$ of $\QM/2\ZM$
can easily be computed thanks to~(b) and~(c) using {\sc Magma}, and we only need to compare
the corresponding sets for the four rank $2$ lattices determined by $M_1$, $M_2$, $M_3$
and $M_4$. This comparison gives the result (see~\S\ref{sub:theo e} for details about these computations).
\end{proof}

\bigskip

\begin{rema}\label{rem:kummer}
If we denote by
$\begin{pmatrix}
2a  & b  \\
b  & 2c
\end{pmatrix}$ a matrix representating the transcendental lattice of $\XCt$,
then $4ac-b^2=228$, so $b$ is even, Writing $b=2\b$, we get
$ac = 57 + \b^2$, so $ac \equiv 1$ or $2 \mod 4$. In particular,
$a$ or $c$ is odd. This shows that the K3 surface $\XCt$ is not a Kummer surface~\cite[Chap.~14,~Cor.~3.20]{huybrechts}.

Also, $\XCt$ is not isomorphic to any of the singular K3 surfaces
constructed by Barth-Sarti in~\cite{barth sarti} (their transcendental lattices have been computed
by Sarti in~\cite{sarti} and none of them have discriminant $228$). For the same reason, $\XCt$ is not isomorphic
to any of the singular K3 surfaces constructed by Brandhorst-Hashimoto in~\cite{brandhorst}
(see also~\cite{bonnafe sarti m20} for a description of some of these).\finl
\end{rema}

\bigskip

\section{Complement: action of ${\boldsymbol{W}}$ on the cohomology of ${\boldsymbol{\XC}}$}\label{sec:action}

\medskip

The group $W$ acts on $\XC$ so it acts on the cohomology groups $\Hrm^k(\XC,\CM)$.
Since $\XC$ is a complete intersection in $\PM^5(\CM)$, with defining equations
of degree $2$, $6$, and $8$, we have:
\equat\label{eq:coho}
\dim_\CM \Hrm^k(\XC,\CM) =
\begin{cases}
1 & \text{if $k \in \{0,4\}$,}\\
9\,502 & \text{if $k=2$,}\\
0 & \text{otherwise.}
\end{cases}
\endequat
The action of $W$ on $\Hrm^0(\XC,\CM)$ and $\Hrm^4(\XC,\CM)$ is trivial.
The aim of this subsection is to determine the character of the representation
of $W$ afforded by $\Hrm^2(\XC,\CM)$.

For this, we first need to parametrize the irreducible characters of $W$.
If $\chi \in \Irr(W)$, we denote by $b_\chi$ the minimal number $k$ such that
$\chi$ occurs in the character of the symmetric power $\Sym^k(V)$ of the natural
representation $V$ of $W$. For instance, if we denote by $\unb_W$ the trivial
character of $W$ and by $\chi_V$ the character afforded by the natural
representation $V$, then
\equat\label{eq:b}
b_{\unb_W}=0,\qquad b_{\chi_V}=1 \qquad\text{and}\qquad b_\e=|\AC|=36
\endequat
(recall that $\e$ denotes the restriction of the determinant to $W$).
Indeed, the first two equalities are immediate from the definition and the last one follows
from~\cite[Chap.~V,~\S{5},~Prop.~5]{bourbaki} and~\eqref{eq:ref}.
Recall from Molien's formula that the number $b_\chi$ an be computed as follows:
let $t$ be an indeterminate and let
$$F_\chi(t)=\frac{\prod_{d \in \Deg(W)} (1-t^d)}{|W|}
\sum_{w \in W} \frac{\chi(w^{-1})}{\det(1-tw)} \in \CM(t).$$
It is a classical fact~\cite[\S{4.5.2}]{broue} that $F_\chi(t) \in \NM[t]$, that $F_\chi(1)=\chi(1)$ and
\equat\label{eq:bb}
b_\chi=\val F_\chi(t).
\endequat
The polynomial $F_\chi(t)$ is called the {\it fake degree} of $\chi$.

\def\DBrm{{\mathrm{DB}}}
\def\DBCC{{\mathcal{DB}}}

A particular feature of the Weyl group of type $\Erm_6$ is that the map
\equat\label{eq:irr}
\fonction{\DBrm}{\Irr(W)}{\NM \times \NM}{\chi}{(\chi(1),b_\chi)}
\endequat
is injective (see for instance the first command of Computation~\ref{comp:ew}).
We denote by $\DBCC(W)$ the image of $\DBrm$ and, if $(d,b) \in \DBCC(W)$,
we denote by $\phi_{d,b}$ its inverse image in $\Irr(W)$. Note that $\phi_{d,b}$ is the
character afforded by an irreducible representation of dimension $d$.
For instance, by~\eqref{eq:b},
we get
\equat\label{eq:phidb}
\phi_{1,0}=\unb_W,\qquad \phi_{6,1}=\chi_V\qquad\text{and}\qquad \phi_{1,36}=\e.
\endequat
By the last line of Computation~\ref{comp:ew}, we have that $|\Irr(W)|=25$ and that
\equat\label{eq:ew}
\begin{array}{c}
\DBCC(W)=\{( 1 , 0) ; ( 1 , 36); ( 6 , 1); ( 6 , 25); ( 10 , 9); ( 15 , 17); ( 15 , 4); ( 15 , 16); ( 15 , 5);\\
( 20 , 20); ( 20 , 10); ( 20 , 2); ( 24 , 6); ( 24 , 12); ( 30 , 3); ( 30 , 15); ( 60 , 11); \\
( 60 , 8); ( 60 , 5); ( 64 , 13); ( 64 , 4); ( 80 , 7); ( 81 , 6); ( 81 , 10); ( 90 , 8)\}.
\end{array}
\endequat

For $i \ge 0$, let $\chi_\XC^{(i)}$ denote the character afforded by the $W$-module
$\Hrm^i(\XC,\CM)$. We set
$$\chi_\XC=\sum_{i \ge 0} (-1)^i \chi_\XC^{(i)}.$$
By~\eqref{eq:coho}, we have
\equat\label{eq:chix}
\chi_\XC=\chi_\XC^{(0)} + \chi_\XC^{(2)} + \chi_\XC^{(4)} = 2 \cdot \unb_W + \chi_\XC^{(2)}.
\endequat
We determine $\chi_\XC^{(2)}$ in the next proposition:

\bigskip

\begin{prop}\label{prop:character}
With the above notation, we have
\eqna
\chi_\XC^{(2)}&=&
\unb_W + 3\,\e + 8\,\phi_{6, 25} + 2\,\phi_{ 10, 9} +
7 \,\phi_{ 15, 17} + \phi_{ 15, 4} + 9 \,\phi_{ 15, 16}
+ \phi_{ 15, 5} \\
&& + 14\,\phi_{ 20, 20} +
4 \,\phi_{ 20, 10} +
2 \,\phi_{ 24, 6}
+ 8 \,\phi_{ 24, 12} + 14\,\phi_{ 30, 15} +
18 \,\phi_{ 60, 11} + 12 \,\phi_{ 60, 8} \\
&& +
4 \,\phi_{ 60, 5} + 26 \,\phi_{ 64, 13} +
2 \,\phi_{ 64, 4} + 12 \,\phi_{ 80, 7} +
7\,\phi_{ 81, 6} + 21 \,\phi_{ 81, 10} +
12 \,\phi_{ 90, 8}.
\endeqna
\end{prop}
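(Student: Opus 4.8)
The plan is to compute the virtual character $\chi_\XC=\sum_i(-1)^i\chi_\XC^{(i)}$ on each conjugacy class and then to recover $\chi_\XC^{(2)}$ from~\eqref{eq:chix}, which gives $\chi_\XC^{(2)}=\chi_\XC-2\,\unb_W$. The starting point is the topological Lefschetz fixed-point theorem: since $W$ is finite and $\XC$ is smooth and projective, every $w\in W$ acts as an automorphism of finite order, and for such an automorphism the Lefschetz number equals the Euler characteristic of the fixed locus. Writing $e(-)$ for the topological Euler characteristic, this reads
$$\chi_\XC(w)=\sum_i(-1)^i\Tr\bigl(w\mid\Hrm^i(\XC,\CM)\bigr)=e(\XC^w),$$
so that $\chi_\XC^{(2)}(w)=e(\XC^w)-2$ for every $w\in W$. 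Thus the whole computation reduces to determining $e(\XC^w)$ for one representative $w$ of each of the $25$ conjugacy classes of $W$.

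Next I would analyse these fixed loci. As $w$ acts linearly, $\PM(V)^w=\bigsqcup_{\z}\PM(V(w,\z))$ over the eigenvalues $\z$ of $w$, whence
$$\XC^w=\bigsqcup_{\z}\bigl(\XC\cap\PM(V(w,\z))\bigr),$$
where $\XC\cap\PM(V(w,\z))$ is cut out inside $\PM(V(w,\z))$ by the restrictions of $f_2,f_6,f_8$. By Lemma~\ref{lem:trivial}, the restriction of $f_d$ to $V(w,\z)$ vanishes identically unless $\z^d=1$, so the number of surviving equations equals $\#\{d\in\{2,6,8\}\ :\ \mathrm{ord}(\z)\mid d\}$. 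Combining this with the eigenspace bound $\dim V(w,\z)\le\d(\mathrm{ord}(\z))$ coming from~\eqref{eq:max-dim} (and the rationality of $w$) together with the degree list~\eqref{eq:degres}, a short case check on $\mathrm{ord}(\z)\in\{1,2,3,4,5,6,8,9,12\}$ shows that $\XC\cap\PM(V(w,\z))$ can be positive-dimensional only when $\z=1$ with $\dim V^w\in\{5,6\}$, or when $\z\in\mub_3\setminus\{1\}$ with $\dim V(w,\z)=3$. Since an element of $\Orm(V)$ fixing a hyperplane is a reflection, the first case forces $w=1$ or $w$ a reflection, and the second forces $w$ conjugate to $w_3$ or $w_3^{-1}$. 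Every other class therefore has a finite fixed locus which, being smooth, is reduced, so that $e(\XC^w)=|\XC^w|$ is obtained by a point count in {\sc Magma}.

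It then remains to treat the three positive-dimensional classes. For $w=1$ one reads off $e(\XC)=1+9\,502+1=9\,504$ from~\eqref{eq:coho}. For a reflection $s$, the locus $\XC^s=\XC\cap\PM(V^s)$ is a hyperplane section of the surface $\XC\subset\PM^5(\CM)$; it is smooth, being the fixed locus of the finite-order automorphism $s$, hence is a smooth complete intersection curve of multidegree $(2,6,8)$ in $\PM(V^s)=\PM^4(\CM)$. Adjunction on $\XC$, whose canonical class is $\OC_\XC(2+6+8-5-1)=\OC_\XC(10)$, gives $2g-2=96+10\cdot 96=1056$, so $g=529$ and $e(\XC^s)=-1056$. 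For $w_3$ (conjugate to $w_3^{-1}$ in $W$), Lemma~\ref{lem:courbe 10} identifies $\XC^{w_3}=\CC^+\dotcup\CC^-$ as a disjoint union of two smooth curves of genus $10$, whence $e(\XC^{w_3})=2\,(2-20)=-36$.

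Assembling these values yields $\chi_\XC$, and hence $\chi_\XC^{(2)}=\chi_\XC-2\,\unb_W$, as an explicit class function on $W$; decomposing it into irreducible characters by the standard inner-product computation, using the labelling $\DBCC(W)$ of~\eqref{eq:ew}, produces the stated multiplicities (this last step is carried out in Computation~\ref{comp:ew}). The main obstacle is the reliable determination of $e(\XC^w)$ for all $25$ classes: the three positive-dimensional cases must be handled by hand as above, and the remaining $22$ require accurate fixed-point counts. Once the class function is known, the decomposition itself is routine linear algebra over the character table of $W$.
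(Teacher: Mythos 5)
Your proposal is correct and follows essentially the same route as the paper: the Lefschetz fixed-point formula, hand computation of the Euler characteristics $9\,504$, $-1\,056$ and $-36$ for the identity, the reflections and $w_3^{\pm1}$, {\sc Magma} point counts for the remaining classes, and decomposition against the character table (your adjunction computation for the genus of $\XC^{s_1}$ gives the same value as the paper's Euler-characteristic formula for smooth complete intersections). The one step that does not quite hold as written is your claim that the eigenvalue/degree case check \emph{shows} $\XC^w$ is finite for every other class: restricting $f_2,f_6,f_8$ to $\PM(V(w,\z))$ only bounds $\dim\bigl(\XC\cap\PM(V(w,\z))\bigr)$ from below by the expected dimension, so ``expected dimension $\le 0$'' does not by itself exclude a positive-dimensional component (the restrictions could fail to be a regular sequence on $V(w,\z)$). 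The paper closes this by computing $\dim(\XC^w)$ for all $25$ classes directly in {\sc Magma} (Computation~\ref{comp:fixed dim}); you would need the same verification, which in any case comes for free alongside your point counts. Finally, $\PM(V)^{s_1}$ is the disjoint union of $\PM(V^{s_1})$ and the isolated point $[e_1]$; that $[e_1]\notin\XC$ (Remark~\ref{rem:f2}) should be noted before asserting $e(\XC^{s_1})=-1\,056$.
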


\bigskip

\begin{proof}
Since $\XC$ is smooth and $W$ is finite, it follows from Lefschetz fixed point formula that
$\chi_\XC(w)$ is equal to the Euler characteristic of the fixed point subvariety $\XC^w$.
If $\dim(\XC^w) \ge 1$, then $w$ is conjugate to $1$, $s_1$ or $w_3$ (see Computation~\ref{comp:fixed dim}).
But:
\begin{itemize}
\item[$\bullet$] $\chi_\XC(1)=9\,504$ by~\eqref{eq:coho}.

\item[$\bullet$] Note that $\PM(V)^{s_1}= [e_1] \cup
\PM(V^{s_1})$. Since $[e_1] \not\in \XC$ by Remark~\ref{rem:f2}, we have that
$$\XC^{s_1} = \XC \cap \PM(V^{s_1}).$$
So $\XC^{s_1}$ is a smooth complete intersection in $\PM(V^{s_1}) \simeq \PM^4(\CM)$
defined by equations of degree $2$, $6$ and $8$ (the restrictions of $f_2$, $f_6$ and
$f_8$ to $V^{s_1}$), so it has Euler characteristic
$-2 \cdot 6 \cdot 8 \cdot (2+6+8-4-1)=-1\,056$. Hence, $\chi_\XC(s_1)=-1\,056$.

\item[$\bullet$] Recall that $\XC^{w_3}$ is the disjoint union of two smooth
curves of genus $10$ by Lemma~\ref{lem:courbe 10}. So $\chi_\XC(w_3)=-36$.
\end{itemize}
If $\dim(\XC^w) \le 0$, then $\chi_\XC(w)$ is just the cardinality of $\XC^w$
(it might be equal to $0$).
These last values of $\chi_\XC$ as well as the decomposition
of $\chi_\XC$ as a sum of irreducible characters
are computed in Computation~\ref{comp:chix}. The result then follows from~\eqref{eq:chix}.
\end{proof}

\bigskip

To be fair, knowing the exact character is not that interesting, but at least
we will use it for making a sanity check for Proposition~\ref{prop:singularities X}.
Indeed, $\Hrm^k(\XC,\CM)^{W'}$ is the direct sum of $\Hrm^k(\XC,\CM)^W$ and
the $\e$-isotypic component of $\Hrm^k(\XC,\CM)$. Then Proposition~\ref{prop:character}
together with~\eqref{eq:coho} shows that
$$\sum_{k \in \ZM} (-1)^k \dim_\CM \Hrm^k(\XC,\CM)^{W'}=6.$$
In other words, the Euler characteristic of $\XC/W'$ is equal to $6$.
But the fiber of the map $\XCt \to \XC/W'$ above an $A_k$ singularity
is the union of $k$ smooth rational curves in $A_k$-configuration,
and this union has Euler characteristic $k+1$. So the Euler characteristic of $\XCt$
is the Euler characteristic of $\XC/W'$ plus the sum of all the Milnor numbers
of singularities of $\XC/W'$. So, by Proposition~\ref{prop:singularities X},
the Euler characteristic of $\XCt$ is
$$6 + 2 \cdot 1 + 3 \cdot 2 + 2 \cdot 3 + 4=24,$$
as expected for a K3 surface.

\bigskip

\begin{rema}\label{rem:hodge}
Since $\XC$ is a smooth complete intersection, its Hodge numbers
can be computed from the degrees of the equations and we get that
$$h^{2,0}(\XC)=h^{0,2}(\XC)=1\,591\qquad \text{and}\qquad h^{1,1}(\XC)=6\,320.$$
However, we do not know how to compute the character of the representations
$\Hrm^{p,q}(\XC,\CM)$ of $W$, for $p+q=2$.\finl
\end{rema}

\bigskip

\setcounter{section}{0}
\def\sectionname{Appendix}
\renewcommand\thesection{\Roman{section}}


\section{Magma computations for Sections~\ref{sec:k3} and~\ref{sec:xw}}\label{appendix:magma}

\medskip

\subsection{Set-up}
Before performing some computations, let us first define in {\sc Magma} the objects involved in this paper.

\bigskip

\begin{quotation}
{\small\begin{verbatim}
W:=PrimitiveComplexReflectionGroup(35);  // Weyl group of type E_6
V:=VectorSpace(DW);  // underlying representation : C^6
n:=Rank(W);  // n=6

DW:=DerivedSubgroup(W); // derived subgroup of W (i.e. W')
Q:=Rationals();

conjDW:=ConjugacyClasses(DW);
conjDW:=[i[3] : i in conj]; // rep. of conj. classes of W'

R:=InvariantRing(W);  // invariant ring C[V]^W
P<x1,x2,x3,x4,x5,x6>:=PolynomialRing(R);  // C[V]
P5:=Proj(P);  // P(V)

f2:=InvariantsOfDegree(W,2)[1];
f5:=InvariantsOfDegree(W,5)[1];
f6:=InvariantsOfDegree(W,6)[1];
f8:=InvariantsOfDegree(W,8)[1];
f9:=InvariantsOfDegree(W,9)[1];
f12:=InvariantsOfDegree(W,12)[1];

f:=[f2,f5,f6,f8,f9,f12];  // fundamental invariants
\end{verbatim}}
\end{quotation}

\bigskip

The first and the last lines require some explanation. For the first line,
we do not use the command {\tt ShephardTodd(35)} but another model of the Weyl group
of type $\Erm_6$ (see~\cite[Rem.~2.3]{bonnafe singular} for explanations). Also, the fact that {\tt f}
defines a set of fundamental invariants can be checked by showing that the
variety defined as their common zeroes is empty:

\bigskip

\begin{quotation}
{\small\begin{verbatim}
> Dimension(Scheme(P5,f));
-1
\end{verbatim}}
\end{quotation}

\bigskip

Finally, note that the variety $\XC$ is defined in {\sc Magma} as a scheme
over the field of coefficients of $W$, which is $\QM$. We will need sometimes
to look at points of $\XC$ with coordinates in bigger number fields.
Let us now compute the dimension of $\XC$ and show that its affine open
subset defined by $x_6 \neq 0$ is smooth:

\bigskip

\begin{comp}\label{comp:1}
~
\begin{quotation}
\begin{verbatim}
> X:=Scheme(P5,[f2,f6,f8]);   // the variety X=Z(f_2,f_6,f_8)
> Dimension(X);
2
> IsSingular(AffinePatch(X,1));
false
\end{verbatim}
\end{quotation}
\end{comp}

\bigskip

Finally, we will need an auxiliary function {\tt FixedPoints} which computes
the fixed points of an element of $\GL_\CM(V)$ for its action on $\PM(V)$:

\bigskip

\begin{quotation}
{\small\begin{verbatim}
FixedPoints:=function(g) local i,j,ii,k,res,mat;
  j:=[P.i^g : i in [1..n]];
  res:=[];
  for i in Subsets({1..n},2) do
    ii:=[k : k in i];
    mat:=Matrix(P,2,2,[[P.ii[1],P.ii[2]],[j[ii[1]],j[ii[2]]]]);
    res:=res cat [Determinant(mat)];
  end for;
  res:=MinimalBasis(Scheme(P5,res));
  res:=Scheme(P5,res);
  return res;
end function;
\end{verbatim}}
\end{quotation}

\bigskip

\subsection{Singularities of ${\boldsymbol{\XC/W'}}$}
Below are the necessary computations for completing the proof
of Proposition~\ref{prop:singularities X} and Lemma~\ref{lem:singular points}.

\begin{comp}\label{comp:order 2}~
\begin{quotation}
{\small\begin{verbatim}
> liste2:=[w : w in conjDW | Order(w) eq 2];
> # liste2;
2
> [Eigenvalues(w) : w in liste2];
[
    { <-1, 4>, <1, 2> },
    { <-1, 2>, <1, 4> }
]
> w2:=[w : w in liste2 | Dimension(Eigenspace(w,1)) eq 2];
> w2:=w2[1];
> v2:=[w : w in liste2 | Dimension(Eigenspace(w,1)) eq 4];
> v2:=v2[1];
\end{verbatim}}
\end{quotation}
\end{comp}

\begin{comp}\label{comp:w2}~
\begin{quotation}
{\small\begin{verbatim}
> Xw2:=FixedPoints(w2) meet X;
> Dimension(Xw2);
0
> Xw2 eq Xw2 meet Scheme(P5,[f5,f9]);
true
\end{verbatim}}
\end{quotation}
\end{comp}

\begin{comp}\label{comp:v2}~
\begin{quotation}
{\small\begin{verbatim}
> Xv2:=FixedPoints(v2) meet X;
> Xv2:=Scheme(P5,MinimalBasis(Xv2));
> Dimension(Xv2);  // 0
0
> Degree(Xv2);  // 96
96
> [g : g in DW | Dimension(Xv2 meet FixedPoints(g)) eq 0] eq [v2^0,v2];
true
> Cv2:=Centralizer(DW,v2);
> Order(Cv2);
96
\end{verbatim}}
\end{quotation}
\end{comp}

\begin{comp}\label{comp:order 3}~
\begin{quotation}
{\small\begin{verbatim}
> liste3:=[w : w in conjDW | Order(w) eq 3];
> # liste3;
4
> [Eigenvalues(ChangeRing(w,CyclotomicField(3))) : w in liste3];
[
    {
        <zeta_3, 3>,
        <-zeta_3 - 1, 3>
    },
    {
        <zeta_3, 3>,
        <-zeta_3 - 1, 3>
    },
    {
        <1, 4>,
        <zeta_3, 1>,
        <-zeta_3 - 1, 1>
    },
    {
        <zeta_3, 2>,
        <1, 2>,
        <-zeta_3 - 1, 2>
    }
]
> w3:=[w : w in liste3 | Dimension(Eigenspace(w,1)) eq 0];
> w3:=w3[1];
> IsConjugate(DW,w3,w3^-1);
false
> v3:=[w : w in liste3 | Dimension(Eigenspace(w,1)) eq 2];
> v3:=v3[1];
> u3:=[w : w in liste3 | Dimension(Eigenspace(w,1)) eq 4];
> u3:=u3[1];
> Dimension(FixedPoints(w3) meet X);
1
\end{verbatim}}
\end{quotation}
\end{comp}

\begin{comp}\label{comp:v3}~
\begin{quotation}
{\small\begin{verbatim}
> Xv3:=FixedPoints(v3) meet X;
> Xv3:=Scheme(P5,MinimalBasis(Xv3));
> Dimension(Xv3);
0
> testv3:=[w : w in Transversal(DW,sub<DW | v3>) |
  Dimension(Xv3 meet FixedPoints(w)) eq 0];
> # testv3;
3
> Gv3:=sub<DW | testv3 cat [v3]>;
> Order(Gv3);
9
> IsElementaryAbelian(Gv3);
true
> Set([Xv3 eq Xv3 meet FixedPoints(w) : w in Gv3]);
{ true }
\end{verbatim}}
\end{quotation}
\end{comp}

\begin{comp}\label{comp:u3}~
\begin{quotation}
{\small\begin{verbatim}
> Xu3:=FixedPoints(u3) meet X;
> Xu3:=Scheme(P5,MinimalBasis(Xu3));
> Dimension(Xu3);
0
> Degree(Xu3);
96
> Nu3:=Normalizer(DW,sub<DW | u3>);
> Order(Nu3);
216
> IXu3:=IrreducibleComponents(Xu3);
> IXu3:=[Scheme(P5,MinimalBasis(I)) : I in IXu3];
> [Degree(I) : I in IXu3];
[ 72, 12, 12 ]
\end{verbatim}}
\end{quotation}
\end{comp}

\begin{comp}\label{comp:A}~
\begin{quotation}
{\small\begin{verbatim}
> A:=[I : I in IXu3 | Degree(I) eq 72];
> A:=A[1];
> testA:=[w : w in Transversal(DW,sub<DW | u3>) |
  Dimension(A meet FixedPoints(w)) eq 0];
> # testA;
1
> Dimension(Scheme(A,[f5,f12]));
-1
\end{verbatim}}
\end{quotation}
\end{comp}

\begin{comp}\label{comp:B}~
\begin{quotation}
{\small\begin{verbatim}
> B:=[I : I in IXu3 | Degree(I) eq 12];
> B:=B[1] join B[2];
> testB:=[w : w in Transversal(DW,sub<DW | u3>) |
  Dimension(B meet FixedPoints(w)) eq 0];
> # testB;
5
> GB:=sub<DW | testB cat [u3]>;
> Order(GB);
27
> Set([Dimension(FixedPoints(w) meet B) : w in GB]);
{ -1, 0 }
> IsElementaryAbelian(GB);
true
\end{verbatim}}
\end{quotation}
\end{comp}

\begin{comp}\label{comp:order 4}~
\begin{quotation}
{\small\begin{verbatim}
> liste4:=[w : w in conjDW | Order(w) eq 4];
> # liste4;
2
> [Eigenvalues(ChangeRing(w,CyclotomicField(4))) : w in liste4];
[
    {
        <1, 2>,
        <zeta_4, 2>,
        <-zeta_4, 2>
    },
    {
        <-zeta_4, 1>,
        <1, 2>,
        <-1, 2>,
        <zeta_4, 1>
    }
]
> w4:=[w : w in liste4 | Dimension(Eigenspace(w,-1)) eq 0];
> w4:=w4[1];
> v4:=[w : w in liste4 | Dimension(Eigenspace(w,-1)) eq 2];
> v4:=v4[1];
\end{verbatim}}
\end{quotation}
\end{comp}

\begin{comp}\label{comp:w4 v4}~
\begin{quotation}
{\small\begin{verbatim}
> Xw4:=FixedPoints(w4) meet X;
> Xw4 eq Xw4 meet Scheme(P5,[f5,f9]);
true
> Xv4:=FixedPoints(v4) meet X;
> Dimension(Xv4);
-1
\end{verbatim}}
\end{quotation}
\end{comp}

\begin{comp}\label{comp:order 5}~
\begin{quotation}
{\small\begin{verbatim}
> liste5:=[w : w in conjDW | Order(w) eq 5];
> # liste5;
1
> w5:=liste5[1];
> Order(Centralizer(DW,w5));
5
> Xw5:=FixedPoints(w5) meet X;
> Dimension(Xw5);
0
> Degree(Xw5);
4
\end{verbatim}}
\end{quotation}
\end{comp}

\begin{comp}\label{comp:order 6}~
\begin{quotation}
{\small\begin{verbatim}
> liste6:=[w : w in conjDW | Order(w) eq 6];
> # liste6;
6
> K6:=CyclotomicField(6);
> [Eigenvalues(ChangeRing(w,K6)) : w in liste6];
[
    {
        <zeta_6 - 1, 1>,
        <-zeta_6, 1>,
        <-zeta_6 + 1, 2>,
        <zeta_6, 2>
    },
    {
        <zeta_6 - 1, 1>,
        <-zeta_6, 1>,
        <-zeta_6 + 1, 2>,
        <zeta_6, 2>
    },
    {
        <1, 2>,
        <-zeta_6 + 1, 1>,
        <zeta_6, 1>,
        <-1, 2>
    },
    {
        <1, 2>,
        <-zeta_6 + 1, 1>,
        <zeta_6, 1>,
        <-1, 2>
    },
    {
        <-zeta_6 + 1, 1>,
        <zeta_6, 1>,
        <-1, 2>,
        <zeta_6 - 1, 1>,
        <-zeta_6, 1>
    },
    {
        <1, 2>,
        <-1, 2>,
        <zeta_6 - 1, 1>,
        <-zeta_6, 1>
    }
]
> ww6:=[w : w in conjDW |
  Dimension(Eigenspace(ChangeRing(w,K6),K6.1)) eq 2];
> w6:=ww6[1];
> IsConjugate(DW,w6,w6^-1);
false
> Xw6:=FixedPoints(w6) meet X;
> Xw6 eq Xw6 meet Scheme(P5,[f5,f9]);
true
> [Dimension(FixedPoints(w) meet X) : w in liste6 | (w in ww6) eq false];
[ -1, -1, -1, -1 ]
\end{verbatim}}
\end{quotation}
\end{comp}

\begin{comp}\label{comp:order 9 12}~
\begin{quotation}
{\small\begin{verbatim}
> liste9:=[w : w in conjDW | Order(w) eq 9];
> # liste9;
2
> [Eigenvalues(ChangeRing(w,CyclotomicField(9))) : w in liste9];
[
    {
        <-zeta_9^5 - zeta_9^2, 1>,
        <zeta_9^2, 1>,
        <-zeta_9^4 - zeta_9, 1>,
        <zeta_9, 1>,
        <zeta_9^4, 1>,
        <zeta_9^5, 1>
    },
    {
        <-zeta_9^5 - zeta_9^2, 1>,
        <zeta_9^2, 1>,
        <-zeta_9^4 - zeta_9, 1>,
        <zeta_9, 1>,
        <zeta_9^4, 1>,
        <zeta_9^5, 1>
    }
]
> w9:=liste9[1];
> IsConjugate(DW,w9,w9^-1);
false
> Xw9:=FixedPoints(w9) meet X;
> Dimension(Xw9);
0
> Degree(Xw9);
6
> liste12:=[w : w in conjDW | Order(w) eq 12];
> # liste12;
2
> [Eigenvalues(ChangeRing(w,CyclotomicField(12))) : w in liste12];
[
    {
        <zeta_12, 1>,
        <zeta_12^2 - 1, 1>,
        <-zeta_12^3 + zeta_12, 1>,
        <-zeta_12, 1>,
        <-zeta_12^2, 1>,
        <zeta_12^3 - zeta_12, 1>
    },
    {
        <zeta_12, 1>,
        <zeta_12^2 - 1, 1>,
        <-zeta_12^3 + zeta_12, 1>,
        <-zeta_12, 1>,
        <-zeta_12^2, 1>,
        <zeta_12^3 - zeta_12, 1>
    }
]
> w12:=liste12[1];
> IsConjugate(DW,w12,w12^-1);
false
> Xw12:=FixedPoints(w12) meet X;
> Dimension(Xw12);
0
> Degree(Xw12);
4
\end{verbatim}}
\end{quotation}
\end{comp}

\begin{comp}\label{comp:order}~
\begin{quotation}
{\small\begin{verbatim}
> Set([Order(w) : w in conjDW]);
{ 1, 2, 3, 4, 5, 6, 9, 12 }
\end{verbatim}}
\end{quotation}
\end{comp}

\bigskip

\subsection{Coordinates of singular points}

\begin{comp}\label{comp:p9p12}~
\begin{quotation}
{\small\begin{verbatim}
> K9:=CyclotomicField(9);
> v9:=Basis(Eigenspace(ChangeRing(Transpose(w9),K9),K9.1))[1];
> v9:=[v9[k] : k in [1..6]];
> p9:=[Evaluate(f9,v9),&* [Evaluate(r,v9) : r in roots]];
> p9[2]^2/p9[1]^8;
-2187/4096
>
> K12:=CyclotomicField(12);
> v12:=Basis(Eigenspace(ChangeRing(Transpose(w12),K12),K12.1))[1];
> v12:=[v12[k] : k in [1..6]];
> p12:=[Evaluate(f12,v12),&* [Evaluate(r,v12) : r in roots]];
> p12[2]^2/p12[1]^6;
-256/19683
\end{verbatim}}
\end{quotation}
\end{comp}

\begin{comp}\label{comp:p1}~
\begin{quotation}
{\small\begin{verbatim}
> Av2:=AffinePatch(Xv2,1);
> A3:=AffineSpace(Rationals(),3);
> AFF<[u]>:=AmbientSpace(Av2);
> phi:=map<Av2 -> A3 | [Evaluate(f,u cat [1]) : f in [f5^3,f9,f12]]>;
> Q19<r19>:=QuadraticField(19);
> test:=phi(Av2);
> pts:=Points(test,Q19);
> xi:=-1/3;
> p1:=[xi^5*pts[1][1],xi^3*pts[1][2],xi^4*pts[1][3]];
> pp1:=[xi^5*pts[2][1],xi^3*pts[2][2],xi^4*pts[2][3]];
> p1;pp1;
[ 1/9*(536*r19 + 2336), 1/9*(-60*r19 - 260), 1/3*(130*r19 + 565) ]
[ 1/9*(-536*r19 + 2336), 1/9*(60*r19 - 260), 1/3*(-130*r19 + 565) ]
\end{verbatim}}
\end{quotation}
\end{comp}

\begin{comp}\label{comp:p2}~
\begin{quotation}
{\small\begin{verbatim}
> AA:=AffinePatch(A,1);
> A3:=AffineSpace(Rationals(),3);
> AFF<[u]>:=AmbientSpace(AA);
> phi:=map<AA -> A3 | [Evaluate(f,u cat [1]) : f in [f5^3,f9,f12]]>;
> p2:=phi(AA);
> K<a,b,c>:=CoordinateRing(p2);
> IsField(K);
true
> pts:=Points(p2,K);
> p:=pts[1];
> xi:=-2^4/(3*5)*p[3]/p[1];
> [xi^5*p[1],xi^3*p[2],xi^4*p[3]];
[
    4/9,
    10/9,
    -5/12
]
\end{verbatim}}
\end{quotation}
\end{comp}

\subsection{Equation of ${\boldsymbol{\XC/W'}}$}

\begin{comp}\label{comp:p}~
\begin{quotation}
{\small\begin{verbatim}
> Scheme(Xv3,f5) eq Xv3;
true
> Set([Scheme(Xv3,r) eq Xv3 : r in roots]);
{ false, true }
> Av3:=AffinePatch(Xv3,1);
> A2:=AffineSpace(Rationals(),2);
> AFF<[u]>:=AmbientSpace(Av3);
> phi:=map<Av3 -> A2 | [Evaluate(f,u cat [1]) : f in [f9,f12]]>;
> test:=phi(Av3);
> K:=CoordinateRing(test);
> IsField(K);
true
> pts:=Points(test,K);
> p:=pts[1];
> xi:=(8/9)*p[2]/p[1];
> p:=[0,xi^-3*p[1],xi^-4*p[2],0];
\end{verbatim}}
\end{quotation}
\end{comp}

\begin{comp}\label{comp:r}~
\begin{quotation}
{\small\begin{verbatim}
> E:=FixedPoints(W.2) meet Scheme(X,x1);
> E:=Scheme(P5,MinimalBasis(E));
> E:=ReducedSubscheme(E);
> Dimension(E);
0
> Degree(E);
96
> IE:=IrreducibleComponents(E);
> # IE;
1
> IE:=[Scheme(P5,MinimalBasis(irr)) : irr in IE];
>
> AIE:=AffinePatch(IE[1],1);
> A3:=AffineSpace(Rationals(),3);
> AFF<[u]>:=AmbientSpace(AIE);
> phi:=map<AIE -> A3 | [Evaluate(f,u cat [1]) : f in [f5^3,f9,f12]]>;
> test:=phi(AIE);
> psi:=map<test-> A3 | [A3.3^5/A3.1^4,A3.2*A3.3^3/A3.1^3,A3.3^5/A3.1^4]>;
> pts:=Points(psi(test),Q6);
> r:=pts[1];
> r:=[r[1],r[2],r[3],0];
> rp:=pts[2];
> rp:=[rp[1],rp[2],rp[3],0];
\end{verbatim}}
\end{quotation}
\end{comp}

\begin{comp}\label{comp:equation}~
\begin{quotation}
{\small\begin{verbatim}
> K:=CompositeFields(Q6,Q19)[1];
> POLY<a,b,c,d,e>:=PolynomialRing(K,5);
>
> equation:=function(p) local res;
function>   res:= - p[4]^2 - 3^7/2^12*(p[2]^4-2^10/3^8*p[3]^3)^2
function>         - p[1]*p[2]*p[3]*(p[2]^4*a+p[3]^3*b)
function>         - p[1]^2*p[2]^2*p[3]^2*c - p[1]^3*p[2]^3*d - p[1]^4*p[3]*e;
function>   return res;
function> end function;
>
> eq1:=equation(p1);
> eqp1:=equation(pp1);
> eq2:=equation(p2);
> eqp:=equation(p);
> eqpp:=equation(pp);
>
> solutions:=Scheme(Spec(POLY),[eq1,eqp1,eq2,eqp,eqpp]);
> Points(solutions);
{@ (207/32, 800/729, 1375/81, -3125/864, -3125/108) @}
\end{verbatim}}
\end{quotation}
\end{comp}

\bigskip

\subsection{The affine chart ${\boldsymbol{\UC_5}}$}\label{sub:u5}
The presentation of the affine chart $\UC_5$ is much more difficult than
the one of $\UC_3$ or $\UC_4$, since
the singularity $\CM^3/\mub_5$ (with $\mub_5$ acting via
$\xi \cdot (y_3,y_4,j)=(\xi^3 y_3,\xi^4 y_4, \xi^2 j)$) is more
complicated than the $A_2$ or $A_3$ singularities involved in $\UC_3$ and $\UC_4$.

\def\cinq{{\SSS{5}}}

\bigskip

\begin{comp}\label{comp:u5}~
\begin{quotation}
{\small\begin{verbatim}
> K5<r>:=CyclotomicField(5);
> g:=DiagonalMatrix(K5,[r^3,r^4,r^2]);
> G:=MatrixGroup<3,K5 | g>;
> RG:=InvariantRing(G);
> PG<y3,y4,j>:=PolynomialRing(RG);
> RG`PrimaryInvariants:=[y3^5,y4^5,j^5];
> sec:=IrreducibleSecondaryInvariants(RG);
> # sec;
5
> sec;
[
    y3*j,
    y3^2*y4,
    y4^2*j,
    y3*y4^3,
    y4*j^3
]
> ALG<Y3,Y4,J,h1,h2,h3,h4,h5>:=Algebra(RG);
> rels:=Relations(RG);
> rels;
[
    -h1^2*h3 + h2*h5,
    Y4*h1 - h3*h4,
    J*h2 - h1^2*h5,
    -h1*h2^2 + Y3*h3,
    Y4*h2 - h4^2,
    J*h3 - h5^2,
    -h1^3*h2 + Y3*h5,
    J*h4 - h1*h3*h5,
    Y4*h1^2 - h2*h3^2,
    -h2^3 + Y3*h4,
    -h3^3 + Y4*h5,
    h1^2*h2*h3 - h1^3*h4,
    Y3*J - h1^5,
    -h1^2*h3^2 + h1*h4*h5,
    Y3*Y4 - h2^2*h4,
    Y4*J - h3^2*h5,
    Y3*h3^2 - h1^2*h2*h4
]
\end{verbatim}}
\end{quotation}
\end{comp}

Computation~\ref{comp:u5} shows that, if we set
$$Y_3=y_3^5,\quad Y_4=y_4^5,\quad J=j^5,$$
$$h_1=y_3j,\quad h_2=y_3^2 y_4,\quad h_3=y_4^2 j,\quad h_4=y_3 y_4^3
\quad\text{and}\quad h_5=y_4 j^3,$$
then the invariant ring $\CM[\CM^3]^{\mub_5}$ admits the following presentation:
\equat\label{eq:u5}
\begin{cases}
\text{Generators:} & Y_3, Y_4, J, h_1, h_2, h_3, h_4, h_5;\\
\text{Relations:} & \text{see the last command of Computation~\ref{comp:u5}}
\end{cases}
\endequat
If we want a description of $(\XC/W') \cap \UC_5$, we first multiply the equation~\eqref{eq:equation} by
$y_4j$, and we get that
$$h_5=\Phi_5(Y_3,Y_4,h_1,h_2,h_3,h_4)$$
for some polynomial $\Phi_5$ in six variables. If we multiply the equation~\eqref{eq:equation}
by $j^3$, we get that
$$J=\Psi_5(Y_3,Y_4,h_1,h_2,h_3,h_4,h_5)$$
for some polynomial $\Phi_5$ in seven variables.
If we denote by $\WC_5$ the closed subscheme of
$\UC_5$ defined by the equations
\equat\label{eq:h5J}
\begin{cases}
h_5=\Phi_5(Y_3,Y_4,h_1,h_2,h_3,h_4),\\
J=\Psi_5(Y_3,Y_4,h_1,h_2,h_3,h_4,h_5),
\end{cases}
\endequat
then $(\XC/W') \cap \UC_5$ is an irreducible
component of the reduced subscheme of $\WC_5$. The next computation determines
the equation of $(\XC/W') \cap \UC_5$ thanks to this remark, because $(\XC/W') \cap \UC_5$
contains more than only one singular point (it is encoded in the variable {\tt XWU5}):

\begin{comp}\label{comp:xdw dans u5}~
\begin{quotation}
{\small\begin{verbatim}
> AFFG:=Spec(ALG);
> U5:=Scheme(AFFG,rels);
> Phi5:= - 2187/4096*Y3*h1*h2 + 1/6*h1*h2*h4 - 207/32*Y3*h3 - 256/19683*Y4*h3
  - 800/729*h3*h4 - 1375/81*h1*h4 + 3125/864*h1*h2 + 3125/108*h3;
> Psi5:= - 2187/4096*Y3*h1^3 + 1/6*h2^2*h5 - 207/32*Y3*h5 - 256/19683*Y4*h5
  - 800/729*h4*h5 - 1375/81*h2*h5 + 3125/864*h1^3 + 3125/108*h5;
> W5:=Scheme(U5,[h5 - Phi5, J - Psi5]);
> W5:=ReducedSubscheme(W5);
> W5:=Scheme(AFFG,MinimalBasis(W5));
> IrrW5:=IrreducibleComponents(W5);
> IrrW5:=[Scheme(AFFG,MinimalBasis(i)) : i in IrrW5];
> [# MinimalBasis(i) : i in IrrW5];
[ 20, 7 ]
> test:=[i : i in IrrW5 | # MinimalBasis(i) lt 10];
> [Degree(ReducedSubscheme(SingularSubscheme(i))) : i in test];
[ 1 ]
> XWU5:=[i : i in IrrW5 | # MinimalBasis(i) gt 10];
> XWU5:=XWU5[1];
\end{verbatim}}
\end{quotation}
\end{comp}

In order to simplify the equations of $(\XC/W') \cap \UC_5$, we can use the equations~\eqref{eq:h5J}
to eliminate the variables $h_5$ and $J$, so that we can embed $(\XC/W') \cap \UC_5$ into
$\AM^6(\CM)$ instead of $\AM^8(\CM)$. This is done as follows in {\sc Magma}:

\begin{comp}\label{comp:xwu5 dans a6}~
\begin{quotation}
{\small\begin{verbatim}
> A6<Y3,Y4,h1,h2,h3,h4>:=AffineSpace(Rationals(),6);
> h5:= - 2187/4096*Y3*h1*h2 + 1/6*h1*h2*h4 - 207/32*Y3*h3
> - 256/19683*Y4*h3 - 800/729*h3*h4 - 1375/81*h1*h4
> + 3125/864*h1*h2 + 3125/108*h3;
> J:= - 2187/4096*Y3*h1^3 + 1/6*h2^2*h5 - 207/32*Y3*h5 - 256/19683*Y4*h5
> - 800/729*h4*h5 - 1375/81*h2*h5 + 3125/864*h1^3 + 3125/108*h5;
> XWU5:=Scheme(A6, [Evaluate(f,[Y3,Y4,J,h1,h2,h3,h4,h5]) :
  f in MinimalBasis(XWU5)]);
> XWU5:=Scheme(A6,MinimalBasis(XWU5));
> XWU5;
Scheme over Rational Field defined by
-h1*h4 + h2*h3,
Y4*h2 - h4^2,
Y4*h1 - h3*h4,
Y3^2 + 2944/243*Y3*h2 - 2048/6561*Y3*h4 - 400000/59049*Y3
+ 4096/2187*h1^2 + 5632000/177147*h2^2 + 3276800/1594323*h2*h4
- 3200000/59049*h2 + 1048576/43046721*h4^2,
-Y3*h4 + h2^3,
-Y3*h3 + h1*h2^2,
Y3*h2*h4 + 1048576/43046721*Y4^2 + 3276800/1594323*Y4*h4
- 3200000/59049*Y4 + 2944/243*h2^2*h4 - 2048/6561*h2*h4^2
- 400000/59049*h2*h4 + 4096/2187*h3^2 + 5632000/177147*h4^2,
Y3*h2^2 + 2944/243*Y3*h4 + 1048576/43046721*Y4*h4 + 4096/2187*h1*h3
- 2048/6561*h2^2*h4 - 400000/59049*h2^2 + 5632000/177147*h2*h4
+ 3276800/1594323*h4^2 - 3200000/59049*h4,
Y3*Y4 - h2^2*h4
\end{verbatim}}
\end{quotation}
\end{comp}

%
%


\section{Magma computations for Section~\ref{sec:action}}

\medskip

We first compute the character table of $W$, the fake degrees and the image $\DBCC(W)$
of the map $\DBrm : \Irr(W) \longto \NM \times \NM$ defined in~\eqref{eq:phidb}.
Note that $\DBCC(W)$ is denoted by {\tt DBW} in the computation below.

\bigskip

\begin{comp}\label{comp:fake}~
\begin{quotation}
{\small\begin{verbatim}
TW:=CharacterTable(W);
conjW:=ConjugacyClasses(W);
# conjW;
C<t>:=FunctionField(Q);
poincare:=(1-t^2)*(1-t^5)*(1-t^6)*(1-t^8)*(1-t^9)*(1-t^12);
fake:=function(chi) local c,res;
  res:=&+ [c[2]*(Q ! chi(c[3]^-1))/Determinant(IdentityMatrix(C,6)
           -t*ChangeRing(c[3],C)) : c in conjW];
  return poincare*res/Order(W);
end function;
b:=function(chi) return Valuation(Numerator(fake(chi)));end function;
DBW:=[[TW[k](W.0),b(TW[k])] : k in [1..# conjW]];
\end{verbatim}}
\end{quotation}
\end{comp}

\bigskip

\begin{comp}\label{comp:ew}~
\begin{quotation}
{\small\begin{verbatim}
> # Set(DBW) eq # DBW;
true
> # DBW; <<i : i in j> : j in DBW>;
25
<<1, 0>, <1, 36>, <6, 1>, <6, 25>, <10, 9>, <15, 17>, <15, 4>,
<15, 16>, <15, 5>, <20, 20>, <20, 10>, <20, 2>, <24, 6>, <24, 12>,
<30, 3>, <30, 15>, <60, 11>, <60, 8>, <60, 5>, <64, 13>, <64, 4>,
<80, 7>, <81, 6>, <81, 10>, <90, 8>>
\end{verbatim}}
\end{quotation}
\end{comp}

\bigskip

\begin{comp}\label{comp:fixed dim}~
\begin{quotation}
{\small\begin{verbatim}
> fixednonfinite:=[k : k in [1..# conjW] |
  Dimension(FixedPoints(conjW[k][3]) meet X) ge 1];
> fixednonfinite;
[ 1, 2, 6 ]
> [Order(conjW[k][3]) : k in fixednonfinite];
[ 1, 2, 3 ]
> [Eigenvalues(ChangeRing(conjW[k][3],CyclotomicField(3))) :
  k in fixednonfinite];
[
    {
        <1, 6>
    },
    {
        <-1, 1>,
        <1, 5>
    },
    {
        <zeta_3, 3>,
        <-zeta_3 - 1, 3>
    }
]
\end{verbatim}}
\end{quotation}
\end{comp}

\bigskip

\begin{comp}\label{comp:chix}~
\begin{quotation}
{\small\begin{verbatim}
> chix:=[0 : k in [1..# conjW]];
> for k in [1..# conjW] do
for>   if k in fixedcurves then
for|if>     if Order(conjW[k][3]) eq 1 then chix[k]:=9504;
for|if|if>       elif Order(conjW[k][3]) eq 2 then chix[k]:=-1056;
for|if|if>       else chix[k]:=-36;
for|if|if>     end if;
for|if>   else
for|if>     w:=conjW[k][3];
for|if>     chix[k]:=Degree(FixedPoints(w) meet X);
for|if>   end if;
for> end for;
> CHAR:=CharacterRing(W);
> mult:=Decomposition(TW,CHAR ! chix);
> for k in [1..# conjW] do
for>   if mult[k] ne 0 then
for|if>     if mult[k] eq 1 then
for|if>       printf "phi_{ %o , %o} + ",DBW[k][1],DBW[k][2];
for|if|if>     else printf "%o \\phi_{%o,%o} + ",mult[k],DBW[k][1],DBW[k][2];
for|if|if>     end if;
for|if>   end if;
for> end for;
3 \phi_{1,0} + 3 \phi_{1,36} + 8 \phi_{6,25} + 2 \phi_{10,9}
+ 7 \phi_{15,17} + phi_{ 15 , 4} + 9 \phi_{15,16} + phi_{ 15 , 5}
+ 14 \phi_{20,20} + 4 \phi_{20,10} + 2 \phi_{24,6} + 8 \phi_{24,12}
+ 14 \phi_{30,15} + 18 \phi_{60,11} + 12 \phi_{60,8} + 4 \phi_{60,5}
+ 26 \phi_{64,13} + 2 \phi_{64,4} + 12 \phi_{80,7} + 7 \phi_{81,6}
+ 21 \phi_{81,10} + 12 \phi_{90,8} +
\end{verbatim}}
\end{quotation}
\end{comp}

\bigskip

\section{Proof of Theorem~\ref{theo:elliptic}(a) and~(b)}\label{appendix:open}

\medskip

\subsection{Extension of ${\boldsymbol{\ph \circ \rho}}$}
Let $\rhoh : \XCh \to \XC/W'$ denote the blow-up of $\XC/W'$ at $p_5$. Since $p_5$ is an $A_4$-singularity,
$\XCh$ contains a unique singular point above $p_5$, which will be denoted by $\phat_5$. It
is an $A_2$-singularity of $\XCh$ and we denote by $\pih : \hat{\XCh} \to \XCh$
the blow-up of $\XCh$ at $\phat_5$ and we set $\hat{\rhoh} = \rhoh \circ \pih$.
Since $\XCt$ is obtained from $\XC/W'$ by successive blow-ups of the singular
points, there is a morphism $\XCt \to \hat{\XCh}$. Therefore,
in order to prove that $\ph \circ \r$ extends to $\XCt$,
we only need to prove that the map
$$\ph \circ \hat{\rhoh} : \hat{\XCh} \setminus \hat{\rhoh}^{-1}(p_5) \to \PM^1(\CM)$$
extends to a morphism $\hat{\XCh} \to \PM^1(\CM)$.

For this, it is sufficient to work in the affine chart $\UC_5$. Let $\ph_5$ denote the
restriction of $\ph$ to $((\XC/W') \cap \UC_5) \setminus\{p_5\}$. Keeping the notation
of~\S\ref{sub:affine charts} and~\S\ref{sub:u5}, $\ph_5$ is defined by
$$\ph_5(Y_3,Y_4,h_1,h_2,h_3,h_4)=
\begin{cases}
[Y_3:h_4] & \text{if $(Y_3,h_4) \neq (0,0)$,}\\
[h_2^2 : Y_4] & \text{if $(h_2,Y_4) \neq (0,0)$.}
\end{cases}$$
Note that if $(Y_3,Y_4,h_1,h_2,h_3,h_4) \in (\XC/W') \cap \UC_5$ is such that
$Y_3=Y_4=h_2=h_4=0$, then $h_1=h_3=0$. So $\ph_5$ is indeed well-defined by
the above formula on
$((\XC/W') \cap \UC_5) \setminus\{p_5\}$.

We denote by $\XCh_5$ the inverse image of $(\XC/W') \cap \UC_5$ in $\XCh$ (so that $\XCh_5$ is the
blow-up of $(\XC/W') \cap \UC_5$ at $p_5$.
So $\XCh_5$ is embedded in $\AM^6(\CM) \times \PM^5(\CM)$. If $t \in \{Y_3,Y_4,h_1,h_2,h_3,h_4\}$,
we denote by $t'$ the corresponding variable in $\PM^5(\CM)$ and by $\XCh_5^{(t)}$ the open affine chart
of $\XCh_5$ defined by $t' \neq 0$. Then:

\begin{lem}\label{lem:phat5}
The point $\phat_5$ belongs to the affine chart
$\XCh_5^{(h_1)}$ and does no belong to the others.
\end{lem}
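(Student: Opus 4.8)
The statement is local around $p_5$, so the plan is to work entirely in the chart $\UC_5 \simeq \CM^3/\mub_5$ and to use the smooth cover $\CM^3$ with coordinates $(y_3,y_4,j)$, on which $\mub_5$ acts with weights $(3,4,2)$. Let $\Sigma \subset \CM^3$ be the preimage of $(\XC/W')\cap\UC_5$; by~\eqref{eq:equation} with $y_5=1$ it is the hypersurface $j^2 = -3(\tfrac{27}{64}y_3^4-\tfrac{16}{243}y_4^3)^2 - R(1,y_3,y_4)$. The first thing I would record is that $\Sigma$ is \emph{smooth} at the origin $O$ (the unique point above $p_5$): the only term of $R(1,y_3,y_4)$ that is linear in $(y_3,y_4)$ is $-\tfrac{3125}{108}\,y_4$, so the differential of the defining equation at $O$ is a nonzero multiple of $dy_4$. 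Hence $(y_3,j)$ form a system of local coordinates on $\Sigma$ at $O$, and $y_4$ is an implicit function of them vanishing to order two, with leading term proportional to $j^2$.

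The next step is to read off the local structure of the $A_4$-singularity $p_5$ from this cover. On $T_O\Sigma = \langle \partial_{y_3}, \partial_j\rangle$ the residual $\mub_5$ acts with weights $(3,2)$, and since $3+2 \equiv 0 \pmod 5$ this is the standard $A_4$-action, in agreement with Lemma~\ref{lem:p5}; its "diagonal" invariant of lowest degree is $y_3 j$. Substituting $y_4 = y_4(y_3,j)$ into the generators appearing in~\eqref{eq:xwu5}, I would compute their leading monomials in $(y_3,j)$: namely $h_1 = y_3 j$, $h_2 \sim y_3^2 j^2$, $h_3 \sim j^5$, $h_4 \sim y_3 j^6$, $Y_3 = y_3^5$ and $Y_4 \sim j^{10}$. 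Thus among the six blow-up coordinates, $h_1$ is the one of minimal order, and it coincides with the diagonal invariant governing the $A_4$-singularity.

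From here the location of $\phat_5$ follows. For the standard $A_4$-singularity $xy=z^5$, the single blow-up leaves one $A_2$-point, which sits in the chart dual to the diagonal coordinate $z$ (in the $x$- and $y$-charts the blow-up is already smooth). Transporting this through the identification $z = h_1$, I would check that at $\phat_5$ every quotient $t/h_1$ with $t \in \{Y_3, Y_4, h_2, h_3, h_4\}$ vanishes, since each such $t$ has strictly higher vanishing order than $h_1$ in $(y_3,j)$. Consequently the image of $\phat_5$ in the exceptional $\PM^5$ is the point $[0:0:1:0:0:0]$, whose only nonzero coordinate is $h_1'$; this is exactly the assertion that $\phat_5 \in \XCh_5^{(h_1)}$ and that it lies in none of the other five charts.

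The step I expect to require the most care is making the "leading order" bookkeeping rigorous, since several generators must be expanded in the implicit coordinate $y_4(y_3,j)$ at once, and one must be sure that the exceptional coordinate of $\phat_5$ is exactly $[0:0:1:0:0:0]$ rather than some nearby point. The cleanest rigorous route, consistent with the rest of this appendix, is to perform the blow-up of $\XCh_5$ directly in each of the six standard affine charts $\XCh_5^{(t)}$ in {\sc Magma}, compute the singular subscheme of the strict transform meeting the exceptional fibre in each chart, and verify that it is empty except in $\XCh_5^{(h_1)}$, where it consists of the single $A_2$-point $\phat_5$.
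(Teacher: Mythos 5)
Your argument is correct but genuinely different from the paper's. The paper's proof of this lemma (Computation~\ref{comp:phat5}) is purely mechanical: it runs {\tt LocalBlowUp} on the model~\eqref{eq:xwu5} in $\AM^6(\CM)$ and checks, chart by chart, that the singular subscheme meeting the fibre over $p_5$ is empty in five charts and zero-dimensional in the chart $\XCh_5^{(h_1)}$ --- which is exactly the ``cleanest rigorous route'' you propose in your last paragraph. Your main argument instead works on the smooth $\mub_5$-cover: you verify that the cover $\Sigma\subset\CM^3$ is smooth at the origin (the linear term $-\tfrac{3125}{108}y_4$ of $R(1,y_3,y_4)$ does give this), identify the germ of $\XC/W'$ at $p_5$ with the standard $A_4$-model $xy=z^5$ via $x=y_3^5$, $y=j^5$, $z=y_3j=h_1$, and then use the standard fact that the one blow-up of $xy=z^5$ leaves its $A_2$-point at the origin of the $z$-chart. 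This buys a conceptual explanation of \emph{why} $\phat_5$ sits in the $h_1$-chart (namely, $h_1$ is exactly the diagonal invariant $z$ of the $A_4$-germ), which the paper's computation does not provide.

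One step deserves tightening. You justify the vanishing of $t/h_1$ at $\phat_5$ for $t\in\{Y_3,Y_4,h_2,h_3,h_4\}$ by comparing orders of vanishing on the cover, but this comparison is not by itself sufficient: $Y_3=x$ and $h_3\equiv c\,y$ are generators of the maximal ideal $\mG$ at $p_5$ (not elements of $\mG^2$), and there \emph{are} points of the exceptional fibre where $x/z\neq 0$ (they are limits of arcs tangent to a branch of the cover), so ``higher order on the cover'' does not force the ratio to vanish at an arbitrary exceptional point. The correct statement is that the classes of $Y_3,Y_4,h_2,h_3,h_4$ in $\mG/\mG^2\simeq\langle x,y,z\rangle$ have no $z$-component (e.g.\ $h_2\equiv c_2x$, $h_3\equiv c_1^2y$, $h_4\equiv Y_4\equiv 0$ modulo $\mG^2$, using $y_4=c_1j^2+c_2y_3^3+\cdots$), while $h_1=z$ exactly; since the $A_2$-point of the blow-up of $xy=z^5$ is the point of the exceptional fibre where $x/z=y/z=0$, all five ratios vanish there. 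With this repair the argument is complete and matches the conclusion of the paper's computation.
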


\bigskip

\begin{proof}
This follows from the following computation:

\begin{comp}\label{comp:phat5}~
\begin{quotation}
{\small\begin{verbatim}
> Xhat5:=LocalBlowUp(XWU5,Scheme(A6,[A6.k : k in [1..6]]));
> [Dimension(SingularSubscheme(Xhat5[k][1])
meet Scheme(XWU5,[Y3,Y4,h1,h2,h3,h4]) @@ Xhat5[k][2]) : k in [1..6]];
[ -1, -1, 0, -1, -1, -1 ]
\end{verbatim}}
\end{quotation}
\end{comp}
\end{proof}

\bigskip

To prove that $\ph \circ \hat{\rhoh}$ extends to $\hat{\XCh}$, we first prove the following lemma:

\bigskip

\begin{lem}\label{lem:first extension}
The morphism $\ph \circ \rhoh : \XCh \setminus \rhoh^{-1}(p_5) \to \PM^1(\CM)$ extends to
a morphism $\XCh \setminus \{\phat_5\} \to \PM^1(\CM)$.
\end{lem}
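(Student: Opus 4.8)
The plan is to reduce the extension to a finite, purely local indeterminacy computation on the exceptional divisor $E:=\rhoh^{-1}(p_5)$. First I would note that, since $p_5$ is an $A_4$-singularity, the single blow-up $\rhoh$ leaves exactly one singular point $\phat_5$ over $p_5$, so $\XCh$ is smooth at every point of $E\setminus\{\phat_5\}$. As $\PM^1(\CM)$ is proper and $\XCh$ is smooth (hence normal) away from $\phat_5$, the rational map $\ph\circ\rhoh$ — which is already a morphism on the dense open $\XCh\setminus E$, because $\ph$ is a morphism on $(\XC/W')\setminus\{p_5\}$ — is defined off a closed subset of codimension $\ge 2$ in $\XCh\setminus\{\phat_5\}$, i.e. off a finite subset of $E\setminus\{\phat_5\}$. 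Thus it suffices to show that no point of $E\setminus\{\phat_5\}$ is a point of indeterminacy, and this can be checked chart by chart on the blow-up $\XCh_5\subset\AM^6(\CM)\times\PM^5(\CM)$ of $(\XC/W')\cap\UC_5$, since by Lemma~\ref{lem:phat5} the point $\phat_5$ lies only in $\XCh_5^{(h_1)}$.

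In each affine chart $\XCh_5^{(t)}$, with $t\in\{Y_3,Y_4,h_1,h_2,h_3,h_4\}$, the radial coordinate is $t$, every original coordinate equals $t$ times the corresponding affine coordinate of the chart, and $E$ is cut out by $t=0$. I would pull back the two homogeneous expressions $[Y_3:h_4]$ and $[h_2^2:Y_4]$ defining $\ph_5$ and factor out the largest possible power of $t$ from each. This produces two morphisms to $\PM^1(\CM)$, each regular on the complement of its base locus in the chart, and agreeing wherever both are defined, since both restrict to $\ph$ off $E$. Several charts are then settled at once: for example in $\XCh_5^{(Y_3)}$ the expression $[Y_3:h_4]$ becomes $[1:v]$ (where $v$ is the affine coordinate corresponding to $h_4$), and in $\XCh_5^{(Y_4)}$ the expression $[h_2^2:Y_4]$ becomes $[Y_4\,w^2:1]$, each visibly regular on the entire chart; similarly $\XCh_5^{(h_4)}$ is handled by the first expression alone.

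The remaining content — and the step I expect to be the main obstacle — is to verify in the charts $\XCh_5^{(h_1)}$, $\XCh_5^{(h_2)}$ and $\XCh_5^{(h_3)}$ that the common base locus of the two cleared expressions, intersected with $E$ and with the ideal~\eqref{eq:xwu5} of $(\XC/W')\cap\UC_5$, reduces to the single point $\phat_5$ in the $h_1$-chart and to the empty set in the $h_2$- and $h_3$-charts. This is where the precise equations of the $A_4$-singularity intervene: one must rule out any spurious indeterminacy point arising from the simultaneous vanishing of both $[Y_3:h_4]$ and $[h_2^2:Y_4]$ along $E$, which amounts to showing that the locus $Y_3=Y_4=h_2=h_4=0$ on $\XCh_5$ meets $E$ only at $\phat_5$; this is carried out by a direct {\sc Magma} computation with the ideal~\eqref{eq:xwu5}. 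Once these base loci are identified, $\ph\circ\rhoh$ is regular at every point of $E\setminus\{\phat_5\}$, and patching the chartwise morphisms, which agree on overlaps, yields the desired extension to $\XCh\setminus\{\phat_5\}$.
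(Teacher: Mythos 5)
Your proposal is correct and follows essentially the same route as the paper's proof: work chart by chart on the blow-up $\XCh_5 \subset \AM^6(\CM)\times\PM^5(\CM)$, pull back the two homogeneous expressions $[Y_3:h_4]$ and $[h_2^2:Y_4]$ defining $\ph_5$, cancel the common factor supported on the exceptional divisor, and verify from the explicit chart equations (computed with {\sc Magma}) that the resulting base loci are empty in the charts $\XCh_5^{(Y_3)}$, $\XCh_5^{(Y_4)}$, $\XCh_5^{(h_4)}$, $\XCh_5^{(h_2)}$, $\XCh_5^{(h_3)}$ and reduce to $\{\phat_5\}$ in $\XCh_5^{(h_1)}$. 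One small caveat: your gloss that this ``amounts to showing that the locus $Y_3=Y_4=h_2=h_4=0$ on $\XCh_5$ meets $\rhoh^{-1}(p_5)$ only at $\phat_5$'' is not quite right, since these pullbacks all vanish identically along the exceptional divisor; the computation must be done with the strict transforms (i.e.\ the cleared expressions in each chart), exactly as stated in the preceding sentence of your plan and as carried out in the paper.
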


\bigskip

\begin{proof}
If $(Y_3,h_4) \neq (0,0)$, then $[Y_3 : h_4]=[Y_3': h_4']$ and so the map $\ph_5 \circ \rhoh$
extends to the affine charts $\XCh_5^{(Y_3)}$ and $\XCh_5^{(h_4)}$.
Similarly, if $Y_4 \neq 0$, then $[h_2^2 : Y_4]=[h_2h_2' : Y_4']$ and so
$\ph_5 \circ \rhoh$
extends to the affine chart $\XCh_5^{(Y_4)}$.

Now, it follows from Computation~\ref{comp:blowup 1} below
that the two affine charts $\XCh_5^{(h_2)}$ and $\XCh_5^{(h_3)}$
can be embedded in $\AM^4(\CM)$, and that maps $\XCh_5^{(h_2)} \to (\XC/W') \cap \UC_5$
and $\XCh_5^{(h_3)} \to (\XC/W') \cap \UC_5$ are given by
$$\fonctio{\XCh_5^{(h_2)}}{(\XC/W') \cap \UC_5}{(a,b,c,d)}{(a^2d,ad^3,abd,ad,acd,ad^2)}$$
$$\fonctio{\XCh_5^{(h_3)}}{(\XC/W') \cap \UC_5}{(a,b,c,d)}{(ab,ac,ad,acd^2,a,acd).}\leqno{\text{and}}$$
So the restriction of $\ph_5 \circ \rhoh$ to $\XCh_5^{(h_2)} \setminus \rhoh^{-1}(p_5)$
(resp. $\XCh_5^{(h_3)} \setminus \rhoh^{-1}(p_5)$) is given
by $(a,b,c,d) \mapsto [a^2d:ad^2]$ (resp. $(a,b,c,d) \mapsto [a^2c^2d^4 : ac]$).
Now, $(a,b,c,d) \mapsto [acd^4 : 1]$
is well-defined on $\XCh_5^{(h_2)}$ and extends $\ph_5 \circ \rhoh$.
Moreover, $(a,b,c,d) \mapsto [a:d]$ is well-defined on $\BC$ (thanks to the
command {\tt Dimension(Scheme(Xhat5[4][1],[a[1],a[4]]));} in Computation~\ref{comp:blowup 1}
below, which shows that the subvariety of $\XCh_5^{(h_3)}$ defined by $a=d=0$ is empty)
and extends $\ph_5 \circ \rhoh$.

\begin{comp}\label{comp:blowup 1}~
\begin{quotation}
{\small\begin{verbatim}
> Xhat5:=LocalBlowUp(XWU5,Scheme(A6,[A6.k : k in [1..6]]));
>
> AFF4<[a]>:=AmbientSpace(Xhat5[4][1]);
> rhoh4:=Xhat5[4][2];
> DefiningEquations(rhoh4);
[
    a[1]^2*a[4],
    a[1]*a[4]^3,
    a[1]*a[2]*a[4],
    a[1]*a[4],
    a[1]*a[3]*a[4],
    a[1]*a[4]^2
]
>
> Dimension(Scheme(Xhat5[4][1],[a[1],a[4]]));
-1
>
> AFF5<[a]>:=AmbientSpace(Xhat5[5][1]);
> rhoh5:=Xhat5[5][2];
> DefiningEquations(rhoh5);
[
    a[1]*a[2],
    a[1]*a[3],
    a[1]*a[4],
    a[1]*a[3]*a[4]^2,
    a[1],
    a[1]*a[3]*a[4]
]
\end{verbatim}}
\end{quotation}
\end{comp}

\bigskip

Therefore, it remains to understand what happens on the open affine subset of $\XCh_5^{(h_1)}$.
Then it follows from Computation~\ref{comp:blowup 2} below that
\equat\label{eq:XCh5h1}
\XCh_5^{(h_1)}=\{(a,b,c,d) \in \AM^4(\CM)~|~ac^2=bd \qquad\text{and} \hskip5cm
\endequat
$$\hskip1cm abcd - \frac{512}{6561}bd^3 - \frac{6561}{2048} ab^2 - \frac{621}{16} abc -
\frac{1600}{243} bd^2 - \frac{2750}{27} bd - 6 a + \frac{3125}{144} b + \frac{3125}{18}c=0\}$$
and that $\rhoh(a,b,c,d)=(ab,acd^2,a,ac,ad,acd)$.

\bigskip

\begin{comp}\label{comp:blowup 2}~
\begin{quotation}
{\small\begin{verbatim}
> AFF3<[a]>:=AmbientSpace(Xhat5[3][1]);
> rhoh3:=Xhat5[3][2];
> MinimalBasis(Xhat5[3][1]);
[
    a[1]*a[3]^2 - a[2]*a[4],
    a[1]*a[2]*a[3]*a[4] - 512/6561*a[2]*a[4]^3 - 6561/2048*a[1]*a[2]^2
    - 621/16*a[1]*a[2]*a[3] - 1600/243*a[2]*a[4]^2 - 2750/27*a[2]*a[4]
    - 6*a[1] + 3125/144*a[2] + 3125/18*a[3]
]
> DefiningEquations(rhoh3);
[
    a[1]*a[2],
    a[1]*a[3]*a[4]^2,
    a[1],
    a[1]*a[3],
    a[1]*a[4],
    a[1]*a[3]*a[4]
]
> MinimalBasis(ReducedSubscheme(SingularSubscheme(Xhat5[3][1])
  meet Scheme(XWU5,[Y3,Y4,h1,h2,h3,h4]) @@ Xhat5[3][2]));
[
    a[4],
    a[3],
    a[2],
    a[1]
]
\end{verbatim}}
\end{quotation}
\end{comp}

\bigskip

Note also that the last command of the Computation~\ref{comp:blowup 1} above shows that $\phat_5$
corresponds to the points $(0,0,0,0)$ in $\XCh_5^{(h_1)} \subset \AM^4(\CM)$.

In particular, the restriction of $\ph \circ \rhoh$ to $\XCh_5^{(h_1)} \setminus \rhoh^{-1}(p_5)$
is given by
$$(a,b,c,d) \longmapsto
\begin{cases}
[ab : acd] & \text{if $(ab,acd) \neq (0,0)$,}\\
[a^2c^2 : acd^2] & \text{if $(a^2c^2 , acd^2) \neq (0,0)$.}
\end{cases}$$
But this can be extended to a morphism $\phh : \XCh_5^{(h_1)} \setminus \{(0,0,0,0)\} \to \PM^1(\CM)$
by the formula
$$(a,b,c,d) \longmapsto
\begin{cases}
[b : cd] & \text{if $(b,cd) \neq (0,0)$,}\\
[ac : d^2] & \text{if $(ac , d^2) \neq (0,0)$.}
\end{cases}$$
Indeed, if $(a,b,c,d) \in \XCh^{(h_1)}$ satisfies $b=d=ac=0$, then $a=b=c=d=0$ (see the equations
of $\XCh_5^{(h_1)}$).
\end{proof}

\bigskip

Therefore, it remains to prove that
$\phh \circ \pih : \DCh \setminus \pih^{-1}(\phat_5) \to \PM^1(\CM)$
can be extended to $\DCh$, where $\DCh = \pih^{-1}(\XCh_5^{(h_1)})$ is an open subset of $\hat{\XCh}$.
Note that $\DCh$ is also the blow-up of $\XCh_5^{(h_1)}$ at $\phat$.
The Computation~\ref{comp:blowup h1} below shows that
$\DCh$ is embedded in $\AM^4(\CM) \times \PM^2(\CM)$ and is the variety of points
$((a,b,c,d),[b':c':d']) \in \AM^4(\CM) \times \PM^2(\CM)$ such that\footnote{A priori, $\DCh$ should be embedded
in $\AM^4(\CM) \times \PM^3(\CM)$, but the second equation giving $\XCh_5^{(h_1)}$ in~\eqref{eq:XCh5h1} has a
non-trivial hommogeneous component of degree $1$, which allows to eliminate one variable.}
\equat\label{eq:Dhat}
\begin{cases}
bc' = cb',\\
cd' = dc',\\
bd' = db',\\
ac^{\prime 2}= d'b',\\
acc' = db',\\
ac^2 = bd,\\
abcd - \frac{512}{6561}bd^3 - \frac{6561}{2048} ab^2 - \frac{621}{16} abc -
\frac{1600}{243} bd^2 - \frac{2750}{27} bd - 6 a + \frac{3125}{144} b + \frac{3125}{18}c=0.
\end{cases}
\endequat

\begin{comp}\label{comp:blowup h1}~
\begin{quotation}
{\small\begin{verbatim}
> Dhat:=Blowup(Xhat5[3][1],Scheme(Xhat5[3][1],a));
> A4P2<a,b,c,d,dpr,cpr,bpr>:=AmbientSpace(Dhat);
> A4P2;
Projective Space of dimension 6 over Rational Field
Variables: a, b, c, d, dpr, cpr, bpr
The grading is:
    0, 0, 0, 0, 1, 1, 1
> MinimalBasis(Dhat);
[
    b*cpr - c*bpr,
    c*dpr - d*cpr,
    b*dpr - d*bpr,
    -a*cpr^2 + dpr*bpr,
    -a*c*cpr + d*bpr,
    -a*c^2 + b*d,
    144/3125*a*b*c*d - 1024/3375*a*c^2*d - 8192/2278125*b*d^3
      - 59049/400000*a*b^2 - 5589/3125*a*b*c - 352/75*a*c^2
      - 864/3125*a + b + 8*c
]
\end{verbatim}}
\end{quotation}
\end{comp}
If $(a,b,c,d,b',c',d') \in \CM^7$, we set
\eqna
\a'(a,b,c,d,b',c',d')&=&\bigl(abcd' - \frac{512}{6561}bd^2d' - \frac{6561}{2048} abb' - \frac{621}{16} abc'\\
&& - \frac{1600}{243} bdd' - \frac{2750}{27} bd' + \frac{3125}{144} b' + \frac{3125}{18}c'\bigr)/6.
\endeqna
Then, if $((a,b,c,d),[b':c':d']) \in \DCh$, it is imediately checked from the equations~\eqref{eq:Dhat}
that
\equat\label{eq:facile bl}
\begin{cases}
\a'(a,b,c,d,b',c',d') b = a b',\\
\a'(a,b,c,d,b',c',d') c = a c',\\
\a'(a,b,c,d,b',c',d') d = a d',\\
\end{cases}
\endequat
Therefore, if $(b,cd) \neq (0,0)$, then $[b : cd]=[b': cd']$ and, if $(ac,d^2) \neq (0,0)$, then
$[ac : d^2] = [\a'(a,b,c,d,b',c',d')c' : d^{\prime 2}]$. But if
$b'=d'=\a'(a,b,c,d,b',c',d')c'=0$, then $b'=c'=d'=0$, which is impossible. Therefore,
the formula
$$
\hat{\phh}((a,b,c,d),[b',c',d']) =
\begin{cases}
[b' : cd'] & \text{if $(b',cd') \neq (0,0)$,}\\
[\a'(a,b,c,d,b',c',d')c' : d^{\prime 2}] & \text{if $(\a'(a,b,c,d,b',c',d')c' , d^{\prime 2})\neq(0,0)$,}\\
\end{cases}
$$
gives a well-defined morphism $\DCh \to \PM^1(\CM)$ which extends $\phh \circ \pih$.

This shows that the morphism $\pht : \XCt \to \PM^1(\CM)$ expected by Theorem~\ref{theo:elliptic}(a)
is well-defined.

%
%
%

\bigskip

\subsection{Elliptic fibration}\label{sub:theo-a}
By the work done in the previous subsection, it remains to show that at least
one fiber of $\pht$ is a smooth elliptic curve. For this, let $\SC$ be the set of singular
points of $\XC/W'$ different from $p_5$ and let $\SC' = \ph(\SC) \subset \PM^1(\CM)$.
Note that $\SC$ and $\SC'$ are finite.

Among the four rational curves $\D_5^k$, $1 \le k \le 4$, some of them are mapped to a
point under $\pht$ and some of them are mapped to the whole $\PM^1(\CM)$. We denote by
$\SC''$ the set of points which are image of the rational curves contracted by $\pht$.
We set $U=\PM^1(\CM) \setminus (\SC' \cup \SC'')$. We denote by $U'$ the open subset
of $\PM^1(\CM)$ such that the restriction of $\pht$ to $\pht^{-1}(U') \to U'$ is smooth.
Finally, we set $U^+=U \cap U'$.

Let $x \in U^+$. Since $[0:1]$ and $[1:0]$ belong to $\SC'$, we can write $x=[1:b]$
for some non-zero complex number $b$. Since $[1:b] \not\in \SC''$, $\ph^{-1}([1:b])$ meets $\UC_3$
and $\ph^{-1}([1:b]) \cap \UC_3$ is a dense open subset of $\pht^{-1}([1:b])$ meets $\UC_3$. But it follows
from~\eqref{eq:u3} that
$$
\begin{array}{l}
\ph^{-1}([1:b]) \cap \UC_3 \simeq \{(c_\trois,j) \in \AM^3(\CM)~|~\\
~\\
\hskip2cm j^2= - 3(\frac{27}{64}-\frac{16}{243}\, b)^2
-
\frac{207}{32} c_\trois - \frac{800}{729} b c_\trois - \frac{1375}{81} c_\trois^2
+ \frac{3125}{864\, b} c_\trois^3 + \frac{3125}{108\, b} c_\trois^4\}.
\end{array}$$
This equation shows that $\ph^{-1}([1:b]) \cap \UC_3$ has genus $1$, and
so $\ph^{-1}([1:b])$ is a smooth elliptic curve.

This shows that the morphism $\pht : \XCt \to \PM^1(\CM)$ is an elliptic fibration.

\bigskip

\subsection{Local study at ${\boldsymbol{p_9}}$ and ${\boldsymbol{p_{12}}}$}
Let us first study the situation at $p_9$. For this, we work in the affine chart $\UC_3$
and use the equations~\eqref{eq:u3}. In this chart, which is a closed subvariety of $\AM^4(\CM)$, the point $p_9$
has coordinates $(0,0,0,27\sqrt{-3}/64)$. Moving this point to the origin by working with the coordinate
$j_\trois=j-27\sqrt{-3}/64$, we get from~\eqref{eq:u3} that
$$
\begin{array}{l}
(\XC/W') \cap \UC_3 =
\{(a_\trois,b_\trois,c_\trois,j_\trois) \in \AM^4(\CM)~|~c_\trois^3=a_\trois b_\trois
~\text{and}~ \petitespace\\
\hskip1.5cm \petitespace (j_\trois+\sqrt{-3}\frac{27}{64})^2= - 3(\frac{27}{64}-\frac{16}{243}\, b_\trois)^2 -
\frac{207}{32} c_\trois - \frac{800}{729} b_\trois c_\trois - \frac{1375}{81} c_\trois^2
+ \frac{3125}{864} a_\trois + \frac{3125}{108} a_\trois c_\trois \},
\end{array}$$
$$
\begin{array}{l}
\CC_4=\{(a_\trois,b_\trois,c_\trois,j_\trois) \in \AM^4(\CM)~|~b_\trois=c_\trois=0
~\text{and}~j_\trois^2 + \sqrt{-3}\frac{27}{32}\, j_\trois = \frac{3125}{864} a_\trois\},
\end{array}$$
$$\begin{array}{l}
\CC_5^+=\{(a_\trois,b_\trois,c_\trois,j_\trois) \in \AM^4(\CM)~|~a_\trois=c_\trois=0
~\text{and}~j_\trois = -\sqrt{-3}\frac{16}{243}\, b_\trois \}
\end{array}$$
$$p_9=(0,0,0,0) \in \CC_4 \cap \CC_5^+.\leqno{\text{and}}$$
Working in the completed local ring of $\UC_3$ at $p_9$, we can replace the variable $j_\trois$
by $j_\trois'=j_\trois^2 + \sqrt{-3}\frac{27}{32}\, j_\trois$, so that, in $\hat{\OC}_{\XC/W',p_9}$,
$j_\trois'$ can be expressed in terms of the variables $a_\trois$, $b_\trois$, $c_\trois$.
In other words,
$$\hat{\OC}_{\XC/W',p_9}  \simeq \CM[[a_\trois,b_\trois,c_\trois]]/\langle c_\trois^3 - a_\trois b_\trois\rangle$$
and
$$\hat{\OC}_{\CC_4,p_9} \simeq \hat{\OC}_{\XC/W',p_9}/\langle b_\trois,c_\trois \rangle
\qquad \text{and}\qquad \hat{\OC}_{\CC_5^+,p_9}
\simeq \hat{\OC}_{\XC/W',p_9}/\langle a_\trois,c_\trois \rangle.$$
The computation of the intersections of $\CCt^+_5$, $\CCt_4$, $\D_9^1$ and $\D_9^2$ can be done
with these completed local rings, and we get that, after exchanging $\D_9^1$ and $\D_9^2$ if necessary,
the following result holds:

\bigskip

\begin{lem}\label{lem:inter-p9}
The curve $\CCt^+_5$ (resp. $\CCt_4$) intersects $\D_9^1$ (resp. $\D_9^2$) transversely at
one point and does not intersect $\D_9^2$ (resp. $\D_9^1$).
\end{lem}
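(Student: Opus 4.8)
The plan is to reduce everything to the completed local ring and to recognize $p_9$ as a cyclic quotient singularity, whose minimal resolution together with the strict transforms of the two curves can then be read off from the combinatorics of a fan. By the formulas just displayed, $\hat{\OC}_{\XC/W',p_9}\simeq \CM[[a_\trois,b_\trois,c_\trois]]/\langle c_\trois^3-a_\trois b_\trois\rangle$ is the standard equation of an $A_2$ (Du Val) surface singularity, and $\CC_4$ (resp. $\CC_5^+$) is cut out locally by $b_\trois=c_\trois=0$ (resp. $a_\trois=c_\trois=0$), i.e. it is the $a_\trois$-axis (resp. the $b_\trois$-axis). First I would identify this ring with $\CM[[x,y]]^{\mub_3}$, where $\z_3\in\mub_3$ acts by $(x,y)\mapsto(\z_3 x,\z_3^2 y)$ and the invariants are $a_\trois=x^3$, $b_\trois=y^3$, $c_\trois=xy$; under this identification $\CC_4$ and $\CC_5^+$ are exactly the images of the two coordinate axes $\{y=0\}$ and $\{x=0\}$.

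Next I would describe the minimal resolution torically. The singularity $\CM^2/\mub_3=\tfrac13(1,2)$ is the affine toric variety attached to the cone $\sigma$ generated by $(1,0)$ and $(1,3)$ in $N=\ZM^2$, normalised so that the invariant $c_\trois$ corresponds to the ray $(1,0)$; a direct check on the pairings $\langle m,u\rangle$ shows that the two extremal rays $(1,0)$ and $(1,3)$ correspond respectively to $\CC_4$ and $\CC_5^+$. The minimal smooth subdivision adds the two primitive interior rays $(1,1)$ and $(1,2)$, and the associated toric divisors are the two smooth rational $(-2)$-curves over $p_9$, which I take as $\D_9^1$ and $\D_9^2$.

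Then I would read off the incidences from the fan, whose rays are, in order along $\sigma$, $(1,0),(1,1),(1,2),(1,3)$. In a smooth fan two toric divisors meet — and then transversely, in the single torus-fixed point spanned by their rays — exactly when the rays are adjacent. Hence the strict transform attached to $(1,0)$, namely $\CCt_4$, meets only the exceptional curve of $(1,1)$; the one attached to $(1,3)$, namely $\CCt_5^+$, meets only the exceptional curve of $(1,2)$; and these two exceptional curves are distinct, sitting at opposite ends of the $A_2$-chain and meeting each other at one point. Since the statement allows exchanging $\D_9^1$ and $\D_9^2$, labelling the exceptional curve met by $\CCt_5^+$ as $\D_9^1$ and the one met by $\CCt_4$ as $\D_9^2$ yields precisely the asserted incidences.

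The only points needing care — and the (modest) main obstacle — are to verify that the toric strict transforms coincide with the strict transforms of the global curves $\CC_4,\CC_5^+$ inside $\XCt$, and that forming the minimal resolution $\XCt$ does not disturb the chart over $p_9$; the latter holds because the blow-ups resolving the other singular points are performed away from $p_9$ and $\XCt\to\XC/W'$ is an isomorphism over the smooth locus, so the neighbourhood of $p_9$ is unaffected. Both are immediate from the displayed local equations. Alternatively, avoiding toric language, one may resolve $\{c_\trois^3=a_\trois b_\trois\}$ by two successive blow-ups at the origin and follow each coordinate axis through the resulting charts; this is the "completed local ring" computation alluded to in the text and reproduces the same adjacency data.
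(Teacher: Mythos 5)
Your proposal is correct and follows essentially the same route as the paper: both reduce to the completed local ring $\hat{\OC}_{\XC/W',p_9}\simeq\CM[[a_\trois,b_\trois,c_\trois]]/\langle c_\trois^3-a_\trois b_\trois\rangle$ with $\CC_4$ and $\CC_5^+$ appearing as the two coordinate axes, and then resolve this $A_2$ germ. The paper leaves that final local computation implicit (``can be done with these completed local rings''), whereas you carry it out explicitly via the toric model $\CM^2/\mub_3$ of type $\tfrac{1}{3}(1,2)$, correctly reading off that the strict transforms of the two axes attach transversely to opposite ends of the $A_2$-chain of exceptional curves.
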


\bigskip

The situation at $p_{12}$ is very similar to the situation at $p_9$. Indeed, we work in the affine
chart $\UC_4$ and use the equations~\eqref{eq:u4} and we obtain that
$$\hat{\OC}_{\XC/W',p_{12}}  \simeq \CM[[a_\quatre,b_\quatre,c_\quatre]]/\langle c_\quatre^4 -
a_\quatre b_\quatre\rangle$$
and
$$\hat{\OC}_{\CC_3,p_{12}} \simeq \hat{\OC}_{\XC/W',p_{12}}/\langle b_\quatre,c_\quatre \rangle
\qquad \text{and}\qquad \hat{\OC}_{\CC_5^+,p_{12}}
\simeq \hat{\OC}_{\XC/W',p_{12}}/\langle a_\quatre,c_\quatre \rangle.$$
Therefore, after exchanging $\D_{12}^1$ and $\D_{12}^3$ if necessary,
the following result holds:

\bigskip

\begin{lem}\label{lem:inter-p12}
The curve $\CCt^+_5$ (resp. $\CCt_3$) intersect $\D_{12}^1$ (resp. $\D_{12}^3$) transversely at
one point and does not intersect $\D_{12}^2 \cup \D_{12}^3$ (resp. $\D_{12}^1 \cup \D_{12}^2$).
\end{lem}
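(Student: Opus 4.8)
The plan is to run \emph{verbatim} the argument already carried out at $p_9$ for Lemma~\ref{lem:inter-p9}, the sole change being that the $A_2$ equation $c_\trois^3=a_\trois b_\trois$ is replaced by the $A_3$ equation $c_\quatre^4=a_\quatre b_\quatre$, which is why one now resolves a chain of three exceptional curves $\D_{12}^1$, $\D_{12}^2$, $\D_{12}^3$ instead of two. All the local input is already in hand: working in the chart $\UC_4$ and using the equations~\eqref{eq:u4}, the completed local rings recorded just above give $\hat{\OC}_{\XC/W',p_{12}}\simeq\CM[[a_\quatre,b_\quatre,c_\quatre]]/\langle c_\quatre^4-a_\quatre b_\quatre\rangle$, with $\CC_3$ and $\CC_5^+$ cut out at $p_{12}$ by $\langle b_\quatre,c_\quatre\rangle$ and $\langle a_\quatre,c_\quatre\rangle$ respectively. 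Thus, locally at $p_{12}$, the curves $\CC_3$ and $\CC_5^+$ are precisely the two coordinate axes $\{b_\quatre=c_\quatre=0\}$ and $\{a_\quatre=c_\quatre=0\}$ through the vertex of the $A_3$ surface singularity.

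First I would recall the local picture of the minimal resolution of an $A_3$ singularity. Writing $\{c^4=ab\}\simeq\CM^2/\mub_4$, where $\mub_4$ acts on $\CM^2$ (coordinates $u,v$) by $\zeta\cdot(u,v)=(\zeta u,\zeta^{-1}v)$ and $a=u^4$, $b=v^4$, $c=uv$, the exceptional locus of the minimal resolution is a linear chain of three smooth rational $(-2)$-curves $E_1-E_2-E_3$. The two coordinate axes $\{v=0\}$ and $\{u=0\}$ upstairs descend respectively to $\{b=c=0\}$ and $\{a=c=0\}$, and the assertion to verify is that their strict transforms meet the two \emph{extremal} curves $E_1$ and $E_3$ of the chain, each transversally and in a single point, while being disjoint from the remaining exceptional curves.

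To pin this down concretely I would resolve the singularity by successive blow-ups (equivalently, read it off from the toric fan of $\CM^2/\mub_4$), following at each stage the strict transforms of the two axes. Labelling the exceptional curves so that $E_1=\D_{12}^1$ and $E_3=\D_{12}^3$ --- this accounts for the clause ``after exchanging $\D_{12}^1$ and $\D_{12}^3$ if necessary'' --- one reads off that $\CC_5^+$ (the image of $\{u=0\}$) meets $\D_{12}^1$ transversally at one point and is disjoint from $\D_{12}^2\cup\D_{12}^3$, while $\CC_3$ (the image of $\{v=0\}$) meets $\D_{12}^3$ transversally at one point and is disjoint from $\D_{12}^1\cup\D_{12}^2$. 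Since $\r : \XCt\to\XC/W'$ restricts to this local minimal resolution near $\r^{-1}(p_{12})$ and is an isomorphism elsewhere, these local intersection data are the genuine intersection numbers of $\CCt_5^+$, $\CCt_3$ and the $\D_{12}^k$ in $\XCt$, which is the content of the lemma.

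The only real content --- hence the step to get right --- is that the two coordinate axes separate to \emph{opposite} ends of the exceptional chain, rather than both meeting the same end or meeting the interior curve $\D_{12}^2$; this is exactly the standard feature of the $A_n$ resolution, namely that the two branches of $\{ab=c^{n+1}\}$ correspond to the two ends of the chain of $(-2)$-curves. Everything else is formal and word-for-word identical to the treatment of $p_9$, so no computation beyond the (entirely standard) $A_3$ resolution is needed.
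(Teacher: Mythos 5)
Your proposal is correct and follows essentially the same route as the paper: identify $\hat{\OC}_{\XC/W',p_{12}}$ with the $A_3$ singularity $c_\quatre^4=a_\quatre b_\quatre$, recognize $\CC_3$ and $\CC_5^+$ as the two coordinate axes $\{b_\quatre=c_\quatre=0\}$ and $\{a_\quatre=c_\quatre=0\}$, and invoke the standard fact that in the minimal resolution of $A_n$ the strict transforms of the two axes meet the two opposite ends of the exceptional chain. The paper leaves that last computation implicit ("can be done with these completed local rings"), whereas you spell it out via the $\mub_4$-quotient/toric picture, but the argument is the same.
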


\bigskip

Finally, since $\CC_5^+$ and $\CC_5^-$ intersect transversely at a smooth point of $\XC/W$, we get:

\bigskip

\begin{lem}\label{lem:inter c5}
The curves $\CCt^+_5$ and $\CCt^-_5$ intersect transversely at only one
point.
\end{lem}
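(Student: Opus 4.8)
The plan is to reduce the assertion about the strict transforms $\CCt_5^{+}$ and $\CCt_5^{-}$ in $\XCt$ to the statement about $\CC_5^{+}$ and $\CC_5^{-}$ in $\XC/W'$ already proved in Proposition~\ref{prop:c5}, exploiting the fact that a minimal resolution is an isomorphism away from the singular locus. The key point I would establish first is that the unique point where $\CC_5^{+}$ and $\CC_5^{-}$ meet is the point $p=[0:2/9:1/4:0]$ of~\eqref{eq:p}, and that $p$ is a \emph{smooth} point of $\XC/W'$. Indeed, \eqref{eq:p} exhibits $p$ as the image of $\XC^{v_3}$ under $\pi_\fb'$, and Lemma~\ref{lem:order 3}(b) guarantees that every point of $\XC/W'$ lying below $\XC^{v_3}$ is smooth; equivalently, one checks that $p$ does not occur in the list of singular points of Lemma~\ref{lem:singular points}.

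Granting this, the minimal resolution $\r:\XCt\to\XC/W'$ restricts to an isomorphism over a neighbourhood of $p$. Since $\CC_5^{+}$ and $\CC_5^{-}$ meet transversely at $p$ by Proposition~\ref{prop:c5}, the strict transforms $\CCt_5^{+}$ and $\CCt_5^{-}$ then meet transversely at the single point $\r^{-1}(p)$ lying above $p$, transversality being preserved because near $\r^{-1}(p)$ the strict transforms are literally the $\r$-preimages of $\CC_5^{\pm}$.

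It remains to verify that this is the \emph{only} common point of $\CCt_5^{+}$ and $\CCt_5^{-}$. For this I would record that the singular points of $\XC/W'$ lying on $\CC_5^{+}$ are exactly $p_9$ and $p_{12}$, while those lying on $\CC_5^{-}$ are $p_9'$ and $p_{12}'$ (this follows from the normalisation of notation fixed after Proposition~\ref{prop:c5}, together with $\CC_5^{+}\cup\CC_5^{-}=\{y_5=0\}$ and the coordinates in Lemma~\ref{lem:singular points}). These two sets are disjoint, since $p_9'=\s(p_9)\neq p_9$ and $p_{12}'=\s(p_{12})\neq p_{12}$ by Example~\ref{ex:stab-cyclique}. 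Hence $\CC_5^{+}$ and $\CC_5^{-}$ share no singular point of $\XC/W'$, so the exceptional curves of $\r$ met by $\CCt_5^{+}$ (those over $p_9$ and $p_{12}$) are disjoint from those met by $\CCt_5^{-}$ (those over $p_9'$ and $p_{12}'$). Consequently any common point of the two strict transforms must project to a common point of $\CC_5^{+}$ and $\CC_5^{-}$, i.e.\ to $p$, and there is exactly one such point in $\XCt$.

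The only genuinely delicate step, and the one I would be most careful about, is this last one: ensuring that resolving the singularities through which the two curves separately pass does not create a spurious intersection of the strict transforms. This is ruled out precisely by the disjointness of $\{p_9,p_{12}\}$ and $\{p_9',p_{12}'\}$; everything else is the formal fact that $\r$ is an isomorphism over the smooth locus of $\XC/W'$.
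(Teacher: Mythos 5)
Your proof is correct and follows the same route as the paper, which disposes of this lemma in a single sentence by observing that $\CC_5^+$ and $\CC_5^-$ meet transversely at a point where the quotient is smooth, so that $\r$ is an isomorphism there. Your additional verification that no spurious intersection of the strict transforms can appear over the singular points (since $\{p_9,p_{12}\}$ and $\{p_9',p_{12}'\}$ are disjoint and $p_5,p_1,p_1',p_2$ do not lie on $\CC_5^\pm$) is a detail the paper leaves implicit, and it is correctly handled.
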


\bigskip

Using the automorphism $\s$, Lemmas~\ref{lem:inter-p9},~\ref{lem:inter-p12} and~\ref{lem:inter c5}
show that the intersection
graph of the $14$ smooth rational curves $\CCt_3$, $\CCt_4$, $\CCt_5^+$, $\CCt_5^-$, $(\D_9^k)_{1 \le k \le 2}$,
$\lexp{\a}{\D_9^k})_{1 \le k \le 2}$, $(\D_{12}^k)_{1 \le k \le 3}$ and $(\lexp{\s}{\D_{12}^k})_{1 \le k \le 3}$
is given by:
$$\begin{picture}(250,170)
\put( 25,140){\circle{10}}\put(03,136){$\D_9^1$}
\put( 75,140){\circle*{10}}\put(80,150){$\CCt_5^+$}
\put(125,140){\circle{10}}\put(117,150){$\D_{12}^1$}
\put(175,125){\circle{10}}\put(167,135){$\D_{12}^2$}
\put(225,110){\circle{10}}\put(217,120){$\D_{12}^3$}
\put( 25,110){\circle{10}}\put(3,106){$\D_9^2$}
\put( 25, 80){\circle{10}}\put(3,76){$\CCt_4$}
\put(225, 80){\circle{10}}\put(235,76){$\CCt_3$}
\put( 25, 50){\circle{10}}\put(-2,46){$\lexp{\s}{\D_{9}^2}$}
\put( 25, 20){\circle{10}}\put(-2,16){$\lexp{\s}{\D_{9}^1}$}
\put( 75, 20){\circle*{10}}\put(80,3.3){$\CCt_5^-$}
\put(125, 20){\circle{10}}\put(117,4){$\lexp{\s}{\D_{12}^1}$}
\put(175, 35){\circle{10}}\put(167,19){$\lexp{\s}{\D_{12}^2}$}
\put(225, 50){\circle{10}}\put(217,34){$\lexp{\s}{\D_{12}^3}$}

\put( 30, 140){\line(1,0){40}}
\put( 25, 135){\line(0,-1){20}}
\put( 80, 140){\line(1,0){40}}
\put( 25, 105){\line(0,-1){20}}
\put( 25, 75){\line(0,-1){20}}
\put( 25, 45){\line(0,-1){20}}
\put(225, 105){\line(0,-1){20}}
\put(225, 75){\line(0,-1){20}}
\put( 30, 20){\line(1,0){40}}
\put( 80, 20){\line(1,0){40}}


\qbezier(129.789,138.563)(150,132.5)(170.211,126.437)
\qbezier(179.789,123.563)(200,117.5)(220.211,111.437)
\qbezier(129.789,21.437)(150,27.5)(170.211,33.563)
\qbezier(179.789,36.563)(200,42.5)(220.211,48.437)

\put(30,140){\oval(90,50)[t]}
\put(30,20){\oval(90,50)[b]}
\put(-15,140){\line(0,-1){120}}

\put(-100,78){${\boldsymbol{(\clubsuit)}}$}
\end{picture}
$$
\smallskip

\noindent
This shows part of Theorem~\ref{theo:elliptic}(b).

\bigskip

\subsection{Singular fibers}
The study of singular fibers will be done in several steps:

\bigskip

\subsubsection{Singular fiber above $[0:1]$}
The fiber $\pht^{-1}([0:1])$ contains $\D_{12}^1$, $\D_{12}^2$, $\D_{12}^3$,
$\lexp{\s}{\D_{12}^1}$, $\lexp{\s}{\D_{12}^2}$, $\lexp{\s}{\D_{12}^3}$,
$\CCt_3$ and maybe some of the curves $\D_5^k$. Since the curves $\D_5^k$ do not intersect
the curves $\D_{12}^l$ and $\lexp{\s}{\D_{12}^m}$,
the Kodaira classification of singular fibers (together with the graph~$(\clubsuit)$)
shows that $\pht^{-1}([0:1])$ contains
exactly one of the curves $\D_5^k$ and is a singular fiber of type $\Erm_7$.

\bigskip

\subsubsection{Singular fiber above $[1:0]$}
The fiber $\pht^{-1}([1:0])$ contains $\D_9^1$, $\D_9^2$, $\lexp{\s}{\D_9^1}$, $\lexp{\s}{\D_9^2}$,
$\CCt_4$ and maybe some of the curves $\D_5^k$. Since the curves $\D_5^k$ do not intersect
the curves $\D_9^l$ and $\lexp{\s}{\D_9^m}$,
the Kodaira classification of singular fibers (together with the graph~$(\clubsuit)$)
shows that $\pht^{-1}([1:0])$ contains
exactly two of the curves $\D_5^k$ and is a singular fiber of type $\Erm_6$.

\bigskip

\subsubsection{Singular fibers above $\ph(p_1)$, $\ph(p_1')$ and $\ph(p_2')$}
Let $\EC_1=\pht^{-1}(\ph(p_1))$, $\EC_1'=\pht^{-1}(\ph(p_1'))$ and $\EC_2=\pht^{-1}(\ph(p_2))$.
Since $\EC_1$ contains $\D_1$ and at least one other irreducible curve,
the Kodaira classification of singular fibers shows that the Euler characteristic $\chib(\EC_1)$
of $\EC_1$ is $\ge 2$, and is equal to $2$ if and only if it is of type $\Irm_2$
in Kodaira's notation. The same hold for $\EC_1'$. Also,
$\EC_2$ contains $\D_2^1 \cup \D_2^2$ and at least one other irreducible curve.
So $\chib(\EC_2) \ge 3$, with equality if and only if $\EC_2$ is of type $\Irm_2$
in Kodaira's notation.

But, by the two previous paragraphs, we have
$$\chib(\pht^{-1}([0:1]))=9\qquad \text{and}\qquad \chib(\pht^{-1}([1:0])) = 8.$$
Since the sum of the Euler characteristics of the singular fibers is equal to $24$,
this forces $\chib(\EC_1)=\chib(\EC_1')=2$ and $\chib(\EC_2)=3$ and that there is no other
singular fiber for $\pht$.
This completes the proof of Theorem~\ref{theo:elliptic}(a).

\bigskip

\subsection{Local study at ${\boldsymbol{p_5}}$}
The fiber $\rhoh^{-1}(p_5)$ is the union of two smooth rational curves.
By~\eqref{eq:XCh5h1}, its intersection with $\XCh_5^{(h_1)}$ is given by the following equations:
$$\rhoh^{-1}(\phat_5) \cap \XCh_5^{(h_1)}=\{(a,b,c,d) \in \AM^4(\CM)~|~a=bd=b+8c=0\}.$$
It contains two irreducible components $\Delh_5^b$ and $\Delh_5^d$ which are respectively given
by the equations $a=b=c=0$ and $a=d=b+8c=0$. The restriction of $\phh$ to
$\Delh_5^b \setminus \{\phat_5\}$ is constant and equal to $[0:1]$,
while its restriction to $\Delh_5^d$ is constant and equal to $[1:0]$.
We denote by $\Delh_5^1$ (resp. $\Delh_5^2$) the closure of $\Delh_5^a$ (resp. $\Delh_5^b$)
in $\XCh$.

We denote by $\CCh_3$ and $\CCh_4$ the
respective strict tranforms of $\CC_3$ and $\CC_4$ in the blow-up $\XCh$.

\bigskip

\begin{lem}\label{lem:CCh3 CCh4}
With the above notation, the following holds:
\begin{itemize}
\itemth{a} The curve $\CCh_3$ meets $\Delh_5^1$ and does not meet $\Delh_5^2$.

\itemth{b} The curve $\CCh_4$ meets $\Delh_5^1 \cup \Delh_5^2$ only at $\phat_5$.
\end{itemize}
\end{lem}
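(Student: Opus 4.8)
The plan is to reduce both parts to locating a single point and then deciding which exceptional curve contains it. First I would record that $\CC_3$ and $\CC_4$ are smooth rational curves (Proposition~\ref{prop:c3-c4}), hence unibranch at $p_5$. Since $\rhoh$ is an isomorphism away from $p_5$, each induced map $\CCh_i\to\CC_i$ (for $i\in\{3,4\}$) is a proper birational morphism onto a smooth, hence normal, curve; composing with the normalization of $\CCh_i$ makes it an isomorphism onto $\CC_i$, so $\CCh_i$ has exactly one point over $p_5$. Equivalently $\CCh_i\cap(\Delh_5^1\cup\Delh_5^2)=\CCh_i\cap\rhoh^{-1}(p_5)$ is a single point, and it suffices to exhibit that point in one affine chart of the blow-up.

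For~(b) I would work in $\XCh_5^{(h_1)}$, the only chart containing $\phat_5$ (Lemma~\ref{lem:phat5}). The curve $\CC_4$ lifts here, because along $\CC_4\cap\UC_5$ the relation $\l_\quatre Y_3^2+\mu_\quatre Y_3+\nu_\quatre h_1^2=0$ (with $\mu_\quatre\neq0$) makes $h_1$ dominate $Y_3$ near $p_5$. Using $\rhoh(a,b,c,d)=(ab,acd^2,a,ac,ad,acd)$ from~\eqref{eq:XCh5h1} together with the equations $Y_4=h_2=h_3=h_4=0$ recalled in the proof of Proposition~\ref{prop:c3-c4}, the strict transform is $\CCh_4=\{c=d=0,\ \l_\quatre ab^2+\mu_\quatre b+\nu_\quatre a=0\}$. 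On the exceptional divisor $a=0$ this forces $\mu_\quatre b=0$, i.e. $b=0$, so the unique point of $\CCh_4$ over $p_5$ is $(0,0,0,0)=\phat_5$. As $\phat_5\in\Delh_5^1\cap\Delh_5^2$, this gives $\CCh_4\cap(\Delh_5^1\cup\Delh_5^2)=\{\phat_5\}$, proving~(b).

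For~(a) the decisive point is that $\CC_3$ lifts into a \emph{different} chart: along $\CC_3\cap\UC_5$ the relation $\l_\trois Y_4^2+\mu_\trois Y_4+\nu_\trois h_3^2=0$ (with $\mu_\trois\neq0$) makes $h_3$, not $h_1$, the leading coordinate at $p_5$, so the strict transform lives in $\XCh_5^{(h_3)}$, where $\rhoh(a,b,c,d)=(ab,ac,ad,acd^2,a,acd)$ (proof of Lemma~\ref{lem:first extension}). From $Y_3=h_1=h_2=h_4=0$ one obtains $\CCh_3=\{b=d=0,\ \l_\trois ac^2+\mu_\trois c+\nu_\trois a=0\}$, which meets $\{a=0\}$ only at $Q=(0,0,0,0)$. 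By Lemma~\ref{lem:phat5}, $\phat_5$ lies in no chart other than $\XCh_5^{(h_1)}$, so $Q\neq\phat_5$ and $Q$ is a smooth point of $\XCh$, lying on exactly one exceptional component. To decide which, I would evaluate $\phh$: on $\CCh_3$ one has $Y_3=h_4=h_2=0$, so $\phh$ restricts to $\ph\circ\rhoh=[h_2^2:Y_4]=[0:ac]=[0:1]$ on $\CCh_3\setminus\{Q\}$ (since $c\neq0$ there, as $\nu_\trois\neq0$), whence $\phh(Q)=[0:1]$ by continuity. Therefore $Q$ lies on whichever exceptional component carries the constant value $[0:1]$, which by the determination of these values recalled above is $\Delh_5^1$; being the only point of $\CCh_3$ over $p_5$, it shows $\CCh_3$ meets $\Delh_5^1$ and misses $\Delh_5^2$, proving~(a).

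The main obstacle is conceptual rather than computational: one must recognize that $\CC_3$ and $\CC_4$, although symmetric in their roles, are resolved in different charts of the single blow-up $\rhoh$ according to whether $h_3$ or $h_1$ is their leading coordinate at $p_5$, and that this is exactly what separates~(b), where the strict transform lands at the singular point $\phat_5$ common to both exceptional curves, from~(a), where it lands at a smooth point of just one of them. Once this is seen, Lemma~\ref{lem:phat5} guarantees $Q\neq\phat_5$, and the constancy of the elliptic pencil $\phh$ on the two exceptional curves pins down the component met; the remaining local computations of the strict transforms are routine.
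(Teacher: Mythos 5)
Your argument is correct and follows essentially the same route as the paper: one point of $\CCh_i$ over $p_5$ by unibranchness, an explicit chart computation identifying that point as $\phat_5$ for $\CCh_4$, and the constant values $[0:1]$ versus $[1:0]$ of $\phh$ on the two exceptional components to single out $\Delh_5^1$ for $\CCh_3$. The only (cosmetic) difference is in part~(a): you locate the point of $\CCh_3$ explicitly in the chart $\XCh_5^{(h_3)}$, whereas the paper simply notes that $\CC_3\cap\UC_5\subset\{h_1=0\}$ forces $\CCh_3$ to avoid the chart $\XCh_5^{(h_1)}$ and hence the point $\phat_5$.
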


\bigskip

\begin{proof}
Let $k \in \{3,4\}$.
Since $\CC_k$ is a smooth rational curve, its strict transform meets
$\rhoh^{-1}(p_5)$ (which is identified with the tangent cone of $\XC/W'$ at $p_5$)
at only one point (the line given by the tangent space of $\CC_k$ at $p_5$).

As $\CC_3\cap \UC_5$ is the subscheme of $(\XC/W') \cap \UC_5$ defined by the equations
$Y_3=h_1=h_2=h_4=0$, its strict transform does not meet the open chart $\XCh_5^{(h_1)}$.
So $\phat_5 \not\in \CCh_3$. So $\CCh_3$ meets $\Delh_5^1$ or $\Delh_5^2$ but does not
meet both. Since $\phh(\CCh_3 \setminus \{\phat_5\})=\phh(\Delh_5^1 \setminus \{\phat_5\})=
[0:1] \neq [1:0] = \phh(\Delh_5^2\setminus \{\phat_5\})$, we get~(a).

As $\CC_4 \cap \UC_5$ is the subscheme of $(\XC/W') \cap \UC_5$ defined by
the equations $Y_4=h_2=h_3=h_4=0$ and since $\rhoh(a,b,c,d)=(ab,acd^2,a,ac,ad,acd)$,
$\CCh_4 \cap \XCh_5^{(h_1)}$ is the subscheme of $\XCh_5^{(h_1)}$ defined by
the equations $c=d=0$. So it contains $\phat_5$.
\end{proof}

\bigskip

We denote by $\hat{\Delh}_5^1$ and $\hat{\Delh}_5^4$ the respective
strict transforms of $\Delh_5^1$
and $\Delh_5^2$ in $\hat{\XCh}$. Also, we denote by $\hat{\Delh}_5^2$ and $\hat{\Delh}_5^3$ the
two smooth rational curves contained in $\pih^{-1}(\phat_5)$, numbered in such a way that
$\hat{\Delh}_5^1 \cap \hat{\Delh}_5^2 \neq \vide$. Finally, we denote by
$\hat{\CCh}_4$ the strict transform of $\CCh_4$ in $\hat{\XCh}$. These are all
smooth rational curves. Using the equations of $\DCh$ given in~\eqref{eq:Dhat}, we get that:
\begin{itemize}
\item[$\bullet$] $\hat{\Delh}_5^1 \cap \DCh = \{((0,0,0,d),[0:0:1])~|~d \in \CM\}$ and
$$\pht((0,0,0,d),[0:0:1])=[0:1].$$

\item[$\bullet$] $\hat{\Delh}_5^2 \cap \DCh = \hat{\Delh}_5^2 = \{((0,0,0,0),[0:x:y]) ~|~[x:y] \in \PM^1(\CM)\}$
and
$$\pht((0,0,0,0),[0:x:y])=[(3125/108) x^2 : y^2].$$

\item[$\bullet$] $\hat{\Delh}_5^3 \cap \DCh = \hat{\Delh}_5^3 = \{((0,0,0,0),[x:y:0])~|~[x:y] \in \PM^1(\CM)\}$ and
$$\pht((0,0,0,0),[x:y:0])=[1:0].$$

\item[$\bullet$] $\hat{\Delh}_5^4 \cap \DCh = \{((0,-8c,c,0),[-8:1:0])~|~ c \in \CM\}$ and
$$\pht((0,-8c,c,0),[-8:1:0])=[1:0].$$

\item[$\bullet$] $\hat{\CCh}_4 = \{((a,b,0,0),[1:0:0])~|~
\frac{6561}{2048} ab^2 + 6 a - \frac{3125}{144} b=0\}$ and
$$\pht((a,b,0,0),[1:0:0])=[1:0].$$
\end{itemize}
This allows to check easily that the intersection graph given in Theorem~\ref{theo:elliptic}
is correct.

\bigskip

\subsection{Sections}
The fact that $\CCt_5^+$ and $\CCt_5^-$ are sections of $\pht$ follows from their
explicit description given in the proof of Proposition~\ref{prop:c5}. The fact that
$\D_5^2$ is a double section follows from the computations done at the end of the
previous subsection.

The proof of Theorem~\ref{theo:elliptic}(b) is now complete.

\bigskip

\section{Magma computations for the proof of Theorem~\ref{theo:elliptic}(c)~and~(e)}

\medskip

\subsection{Theorem~\ref{theo:elliptic}(c)}\label{sub:theo c}
In the next {\sc Magma} computation, we use the following numbering of the
nodes of the big connected subgraph of $(\bigstar)$:
$$
\begin{picture}(290,170)
\put( 55,140){\circle{10}}\put(33,136){$10$}
\put(105,140){\circle*{10}}\put(110,150){$9$}
\put(155,140){\circle{10}}\put(152,150){$8$}
\put(205,125){\circle{10}}\put(202,135){$7$}
\put(255,110){\circle{10}}\put(252,120){$6$}
\put( 55,110){\circle{10}}\put(33,106){$11$}
\put(105,110){\circle{10}}\put(115,106){$1$}
\put( 55, 80){\circle{10}}\put(33,76){$12$}
\put(105, 80){\circle{10}}\put(102,64){$2$}
\put(155, 80){\circle{10}}\put(155,80){\circle*{6}}\put(152,64){$3$}
\put(205, 80){\circle{10}}\put(202,64){$4$}
\put(255, 80){\circle{10}}\put(265,76){$5$}
\put( 55, 50){\circle{10}}\put(33,46){$13$}
\put( 55, 20){\circle{10}}\put(33,16){$14$}
\put(105, 20){\circle*{10}}\put(110,3.3){$15$}
\put(155, 20){\circle{10}}\put(150,4){$16$}
\put(205, 35){\circle{10}}\put(200,19){$17$}
\put(255, 50){\circle{10}}\put(250,34){$18$}

\put( 60, 140){\line(1,0){40}}
\put( 55, 135){\line(0,-1){20}}
\put(110, 140){\line(1,0){40}}
\put( 55, 105){\line(0,-1){20}}
\put( 55, 75){\line(0,-1){20}}
\put( 55, 45){\line(0,-1){20}}
\put(105, 105){\line(0,-1){20}}
\put(255, 105){\line(0,-1){20}}
\put(255, 75){\line(0,-1){20}}
\put( 60, 80){\line(1,0){40}}
\put(110, 80){\line(1,0){40}}
\put(160, 80){\line(1,0){40}}
\put(210, 80){\line(1,0){40}}
\put( 60, 20){\line(1,0){40}}
\put(110, 20){\line(1,0){40}}

\qbezier(159.789,138.563)(180,132.5)(200.211,126.437)
\qbezier(209.789,123.563)(230,117.5)(250.211,111.437)
\qbezier(159.789,21.437)(180,27.5)(200.211,33.563)
\qbezier(209.789,36.563)(230,42.5)(250.211,48.437)

\put(60,140){\oval(90,50)[t]}
\put(60,20){\oval(90,50)[b]}
\put(15,140){\line(0,-1){120}}
\put(-78,78){${\boldsymbol{(\#)}}$}

\end{picture}
$$

\begin{comp}\label{comp:incidence}~
\begin{quotation}
{\small\begin{verbatim}
> G:=Graph<18 | [{2},{1,3,12},{2,4},{3,5},{4,6,18},{5,7},{6,8},
  {7,9},{8,10,15},{9,11},{10,12},{2,11,13},{12,14},{13,15},{9,14,16},
  {15,17},{16,18},{5,17}]>;
> M:=AdjacencyMatrix(G);
> M:=M-2*M^0;
> Rank(M);
16
> liste:=Subsets({1..18},16);
> liste:=[[j : j in k] : k in liste];
> minors:=[Minor(M,k,k) : k in liste];
> Gcd(minors);
19
> test:=[2,4,5,6,7,8,9,10,11,12,13,14,15,16,17,18];
> Minor(M,test,test);
-19
\end{verbatim}}
\end{quotation}
\end{comp}

\subsection{Theorem~\ref{theo:elliptic}(e)}\label{sub:theo e}
Let us first determine the image of $-q_P : P^\perp/P \longto \QM/2\ZM$.
Since
$$P=P_1 \mathop{\oplus}^\perp P_1' \mathop{\oplus}^\perp P_2 \mathop{\oplus}^\perp P_3,$$
with $P_1 = \ZM \D_1$, $P_1'=\ZM \D_1'$, $P_2=\ZM \D_2^1 \oplus \ZM \D_2^2$ and
$P_3$ is generated by the nodes numbered $2$, $4$, $5$, $6$,\dots, $18$ in the above
diagram~$(\#)$.
Let $M_0$ denote the submatrix of the matrix $M$ of Computation~\ref{comp:incidence}
corresponding to this list of $16$ nodes.
Set $\pi_1=\D_1/2$, $\pi_1'=\D_1'/2$, $\pi_2=(\D_2+2\D_2')/3$ and
$\pi_3$ be the vector of $\QM \otimes_\ZM P_3$ whose coordinates are given
by the fourth row of $M_0^{-1}$. Then
$$P^\perp/P = (\ZM/2\ZM) \pi_1 \oplus (\ZM/2\ZM) \pi_1' \oplus (\ZM/3\ZM) \pi_2
\oplus (\ZM/19\ZM) \pi_3.$$
Indeed, this follows from the fact that $\det(M_0)=-19$
and that some coordinates of $\pi_3$ have denominator equal to $19$:

\bigskip

\begin{comp}\label{comp:v3 coor}~
\begin{quotation}
{\small\begin{verbatim}
> M0:=Submatrix(M,test,test);
> M0:=ChangeRing(M0,Rationals());
> pi3:=Submatrix(M0^-1,[4],[1..16]);
> pi3[1][4];
-150/19
\end{verbatim}}
\end{quotation}
\end{comp}

\bigskip

The last command of Computation~\ref{comp:v3 coor} also shows that
$$\langle \pi_3,\pi_3 \rangle = -\frac{150}{19}.$$
Moreover,
$$\langle \pi_1,\pi_1 \rangle = \langle \pi_1',\pi_1'\rangle = -\frac{1}{2}
\qquad \text{and}\qquad \langle \pi_2,\pi_2 \rangle = -\frac{2}{3}.$$
Since $114 \pi \in P$ for all $\pi \in P^\perp$, we will compute the
image of the map $-114q_P$, which is a subset {\tt set0} of $\ZM/228\ZM$ (the last command
shows that {\tt set0} has cardinality $60$):

\bigskip

\begin{comp}\label{comp:set0}~
\begin{quotation}
{\small\begin{verbatim}
> set0:=&cat &cat &cat set0;
> set0:=Set([Numerator(114*s) mod 228 : s in set0]);
> # set0;
60
\end{verbatim}}
\end{quotation}
\end{comp}

\bigskip

Recall that
$$M_1=\begin{pmatrix} 2 & 0 \\ 0 & 114 \end{pmatrix},
\quad M_2=\begin{pmatrix} 6 & 0 \\ 0 & 38 \end{pmatrix},\quad
M_3=\begin{pmatrix} 4 & 2 \\ 2 & 58 \end{pmatrix}\quad\text{and}\quad
M_4=\begin{pmatrix} 12 & 6 \\ 6 & 22 \end{pmatrix}$$
and let $T_k$ denote the rank $2$ lattice determined by the matrix $M_k$.
We denote by $q_k : T_k^\perp/T_k \longto \QM/2\ZM$ the map induced
by the quadratic form on $T_k$.
Again, $114 \pi \in T_k$ for all $\pi \in T_k^\perp$, so we will in fact
compute the image {\tt setk} of $114 q_k$ (which is also a subset of $\ZM/228\ZM$):

\bigskip

\begin{comp}\label{comp:setk}~
\begin{quotation}
{\small\begin{verbatim}
> M1:=Matrix(Rationals(),2,2,[[2,0], [0,114]]);
> M2:=Matrix(Rationals(),2,2,[[6,0], [0,38]]);
> M3:=Matrix(Rationals(),2,2,[[4,2], [2,58]]);
> M4:=Matrix(Rationals(),2,2,[[12,6], [6,22]]);
>
> pi11:=Submatrix(M1^-1,[1],[1,2]);
> pi12:=Submatrix(M1^-1,[2],[1,2]);
> set1:=&cat [[(a*pi11+b*pi12)*M1*Transpose(a*pi11+b*pi12) :
      a in [0..113]] : b in [0..113]];
> set1:=Set([Numerator(114*s[1][1]) mod 228 : s in set1]);
>
> pi21:=Submatrix(M2^-1,[1],[1,2]);
> pi22:=Submatrix(M2^-1,[2],[1,2]);
> set2:=&cat [[(a*pi21+b*pi22)*M2*Transpose(a*pi21+b*pi22) :
      a in [0..113]] : b in [0..113]];
> set2:=Set([Numerator(114*s[1][1]) mod 228 : s in set2]);
>
> pi31:=Submatrix(M3^-1,[1],[1,2]);
> pi32:=Submatrix(M3^-1,[2],[1,2]);
> set3:=&cat [[(a*pi31+b*pi32)*M1*Transpose(a*pi31+b*pi32) :
      a in [0..113]] : b in [0..113]];
> set3:=Set([Numerator(114*s[1][1]) mod 228 : s in set3]);
>
> pi41:=Submatrix(M4^-1,[1],[1,2]);
> pi42:=Submatrix(M4^-1,[2],[1,2]);
> set4:=&cat [[(a*pi41+b*pi42)*M1*Transpose(a*pi41+b*pi42) :
      a in [0..113]] : b in [0..113]];
> set4:=Set([Numerator(114*s[1][1]) mod 228 : s in set4]);
> set0 eq set1;set0 eq set2;set0 eq set3;set0 eq set4;
true
false
false
false
\end{verbatim}}
\end{quotation}
\end{comp}

The last command shows that the image of $-q_P$ coincides with the image of $q_1$ and
does not coincide with the image of $q_2$, $q_3$ or $q_4$. This completes
the proof of Theorem~\ref{theo:elliptic}(e).

\end{document}